\renewcommand{\d}{\mathrm{d}}
\newcommand{\Bad}{\mathrm{Bad}}
\newcommand{\Reals}{\mathbb{R}}
\newcommand{\Int}{\mathbb{Z}}
\newcommand{\rd}{\mathbb{R}^d}
\newcommand{\RN}{\mathbb{R}^N}
\newcommand{\RL}{\mathbb{R}^L}
\newcommand{\RH}{\mathbb{R}^H}
\newcommand{\Nat}{\mathbb{N}}
\newcommand{\e}{\mathbf{e}}
\newcommand{\x}{\mathbf{x}}
\newcommand{\X}{\mathbf{X}}
\newcommand{\y}{\mathbf{y}}
\newcommand{\Y}{\mathbf{Y}}
\newcommand{\z}{\mathbf{z}}
\newcommand{\R}{\mathbb{R}}
\newcommand{\Z}{\mathbf{Z}}
\newcommand{\A}{\mathbf{A}}
\newcommand{\B}{\mathbf{B}}
\newcommand{\ubf}{\mathbf{u}}
\newcommand{\p}{\mathbf{p}}
\newcommand{\q}{\mathbf{q}}
\newcommand{\w}{\mathbf{w}}
\newcommand{\vbf}{\mathbf{v}}
\newcommand{\xnm}{\|\x\|}
\newcommand{\ynm}{\|\y\|}
\newcommand{\Anm}{\|{\mathcal{A}}(\X)\|}
\newcommand{\Bnm}{\|{\mathcal{B}}(\Y)\|}
\newcommand{\cl}{\mathcal{L}}
\newcommand{\cm}{\mathcal{M}}
\newcommand{\cy}{{\mathcal{Y}}}
\newcommand{\cz}{{\mathcal{Z}}}
\newcommand\ssm{\setminus}
\newcommand {\ignore}[1]  {}
\newcommand\nz{\setminus \{0\}}
\newcommand{\dist}{\operatorname{dist}}
\newcommand{\eM}{\operatorname{M}}
\newcommand{\0}{\mathbf{0}}
\newcommand{\op}{\mathrm{op}}
\newcommand{\supp}{\operatorname{supp}}
\newcommand{\df}{{\, \stackrel{\mathrm{def}}{=}\, }}
\newtheorem{thm}{Theorem}[section]
\newtheorem{lem}[thm]{Lemma}
\newtheorem{prop}[thm]{Proposition}
\newtheorem{lemma}[thm]{Lemma}
\newtheorem{cor}[thm]{Corollary}
\newtheorem{claim}[thm]{Claim}
\newtheorem{pro}[thm]{Proposition}
\theoremstyle{definition}
\newtheorem {defn}{Definition}
\newtheorem{remark}{Remark}
\newtheorem{exa}[thm]{Example}
\newtheorem{observation}{Observation}
\begin{document}
\title{Badly Approximable Systems of Affine Forms and Incompressibility on Fractals}
\author{Ryan Broderick, Lior Fishman, and David Simmons} 
\address{Department of Mathematics, Northwestern University
2033 Sheridan Road Evanston, IL 60208-2730} 
\email{ryan@math.northwestern.edu}
\address{Department of Mathematics, University of North Texas
1155 Union Circle 311430
Denton, TX 76203-5017}
\email{lfishman@unt.edu, DavidSimmons@my.unt.edu}
\begin{abstract}
We explore and refine techniques for estimating the Hausdorff
dimension of exceptional sets and their diffeomorphic images used in \cite{BFKRW}, 
which were motivated by the work of W. Schmidt \cite{S1}, D. Kleinbock,
E. Lindenstrauss and B. Weiss \cite{KLW} and C. McMullen \cite{Mc}.
Specifically, we use a variant of Schmidt's game to
deduce the strong $C^1$ incompressibility of
the set of badly approximable systems of linear forms as well as of the set of vectors which are badly approximable with
respect to a fixed system of linear forms.
This generalizes results in \cite{BFK}, \cite{ET}, and \cite{BFKRW}.
\end{abstract}
\maketitle

\section{Introduction}

Fix integers $M,N\geq 1$ and let $\eM_{M\times N}$ denote the set of $M\times N$ matrices.
If $A\in\eM_{M\times N}$ and $\x\in \Reals^M$, then the pair $(A,\x)$ defines an affine transformation $\q\mapsto A\q - \x$ from $\R^N$ to $\R^M$.
The components of this affine transformation can be regarded as a system of $M$ affine forms in $N$ variables.
This system is said to be \emph{badly approximable} if
\[
\inf_{\q\in\Int^N\nz}\|\q\|^{N/M}\dist(A\q - \x,\Int^M) > 0\,,
\]
where $\|\cdot\|$ represents the Euclidean norm
and $\dist(\x, \y)$ is the distance in the associated metric.
Note that the above inequality is equivalent to the existence of some
$c > 0$ such that for all $\q \in \Int^N \setminus \{0\}$ and $\p \in \Int^M$,
\[
\|(A\q - \x) - \p\| > c\|\q\|^{-N/M}.
\]
We write $\Bad(M,N)$ to denote the set of all pairs $(A,\x)$ whose corresponding system of affine forms is badly approximable. We also define the slices of $\Bad(M,N)$:
\begin{align*}
\Bad_A(M,N) &\df \big\{ \x \in \Reals^M :(A,\x)\in \Bad(M,N)\big\}\\
\Bad_{\x}(M,N) &\df \big\{ A \in \eM_{M\times N} :(A,\x)\in \Bad(M,N)\big\}.
\end{align*}

In \cite{K2}, D.\ Kleinbock proved that
$\Bad(M,N)$ has full Hausdorff dimension. 
In \cite{BHKV}, Y.\ Bugeaud, S.\ Harrap, S.\ Kristensen, and S.\ Velani
improved this result by showing that for any $A\in\eM_{M\times N}$ the slice $\Bad_A(M,N)$ has dimension $M$.
This latter set was later shown to be winning in the sense of Schmidt's game (see Section \ref{game}),
first in the case $M=N=1$ by J.\ Tseng in \cite{T2} and then in general by N.\ Moshchevitin
in \cite{M}. This winning property implies that $\Bad_A(M,N)$ exhibits rather remarkable behavior
under intersections, namely it is \textsl{incompressible}, a term introduced by S.\ G.\ Dani in
\cite{D}.

\begin{defn}
\label{incompdef}
A set $S \subseteq \rd$ is said to be \textsl{incompressible} if, for each nonempty open $U \subseteq \rd$, each $L \geq 1$, and each sequence $f_i: U\to\rd$ of $L$-bi-Lipschitz maps,
\begin{equation}
\label{incompeq}
\dim\left(\bigcap_{i=1}^{\infty} f_i^{-1}(S)\right) = d.
\end{equation}
\end{defn}

Here, a map $f:U\to\rd$ is called $L$-bi-Lipschitz if for each $\x,\y\in U$,
$$L^{-1}\|\x-\y\| \le \|f(\x)-f(\y)\| \le L \|\x-\y\|.$$
If $K \subseteq \rd$, we will say a set $S$ is \textsl{incompressible on $K$}
if, under the conditions of Definition \ref{incompdef}, the set
\begin{equation}
\label{intersectionwithK}
K \cap \bigcap_{i=1}^{\infty} f_i^{-1}(S)
\end{equation}
has the same Hausdorff dimension as $K \cap U$, whenever $U \cap K \neq \varnothing$.
Incompressibility, as Dani defined it, is then simply the case $K = \rd$.
In \cite{BFK} and \cite{ET} independently, Moshchevitin's result was improved by showing
that $\Bad_A(M,N)$ is winning on certain fractal subsets of $\Reals^M$, i.e. those which support absolutely decaying measures (see \cite{KLW}).

In \cite{BFKRW}, it was shown that for every $M$ the set $\Bad_{\0}(M,1)$ is hyperplane absolute winning (HAW), i.e. it is winning for a certain variant of Schmidt's game (see Section \ref{game}). In particular, HAW sets are winning for Schmidt's game played on hyperplane diffuse sets. Note in particular that the support of an absolutely decaying measure is hyperplane diffuse (Theorem 5.1 in \cite{BFKRW}).

It was also shown in \cite{BFKRW} that the HAW property implies that a set satisfies a variant of incompressibility. Namely, if a set $S$ satisfies (\ref{incompeq}) for every sequence
of $C^1$ maps, then $S$ is said to be \textsl{strongly $C^1$ incompressible}. Here the restriction on the sequence has been both relaxed and tightened, since the bi-Lipschitz constants are no longer required to be uniform, but differentiability is now assumed.
If a set $S$ is HAW, then it is strongly $C^1$ incompressible on hyperplane diffuse sets (see \cite{BFKRW}).

Using this terminology, we are able to strengthen the above-mentioned result
from \cite{BFK} and \cite{ET} in the following way.
\begin{thm}
\label{BadAHAW}
For every $M,N\in\Nat$ and $A\in \eM_{M\times N}$, $\Bad_A(M,N)$ is HAW.
\end{thm}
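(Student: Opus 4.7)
The plan is to exhibit, for each small $\beta > 0$, a winning strategy for Alice in the hyperplane $\beta$-absolute game targeting $\Bad_A(M,N)$. It is convenient to instead target the level set
\[
S_c \df \big\{ \x \in \R^M : \|A\q - \p - \x\| \geq c\|\q\|^{-N/M}\ \forall\, \q \in \Int^N \setminus \{0\},\ \p \in \Int^M \big\}
\]
for a small $c = c(\beta, \rho_0) > 0$, where $\rho_0$ is the radius of Bob's initial ball; since $\Bad_A(M,N) = \bigcup_{c>0} S_c$, any strategy producing a point of $S_c$ suffices. Call $(\q, \p) \in (\Int^N \setminus \{0\}) \times \Int^M$ a \emph{resonance} and let $F_{(\q,\p)} \df B(A\q - \p, r_\q)$, with $r_\q \df c\|\q\|^{-N/M}$, be its forbidden ball. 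The driving observation is that any affine hyperplane $H$ through the point $A\q - \p$ satisfies $F_{(\q,\p)} \subset \mathcal{N}^{(r_\q)}(H)$, so Alice eliminates a single resonance in one move whenever $r_\q \leq \beta \rho_n$, where $\rho_n$ is the radius of Bob's current ball $B_n$.

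The strategy proceeds scale-by-scale. Partition $\Int^N \setminus \{0\}$ into shells $\Sigma_k \df \{\q : T^k \leq \|\q\| < T^{k+1}\}$ for a parameter $T > 1$, and assign each $k$ a block of $K$ consecutive rounds (those with $Kk \leq n < K(k+1)$), where $K = K(\beta, M, N)$ is a constant fixed by the combinatorial lemma below. In the $k$-th block, Alice handles the (at most $K$) resonances $(\q, \p)$ with $\q \in \Sigma_k$ and $F_{(\q,\p)} \cap B_n \ne \varnothing$ one per round, each via a hyperplane move through the corresponding center $A\q - \p$. Using the standard game-rule bounds on $\rho_n$ and the calibration $T = \beta^{-KM/N}$, $c \leq \beta^{K+1}\rho_0$, one checks directly that $r_\q \leq \beta \rho_n$ for every $\q \in \Sigma_k$ throughout its block, so Alice's moves are legal and do eliminate every threatening forbidden ball.

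The heart of the proof, and where I expect the main difficulty, is the combinatorial lemma that at most $K$ resonances per shell can simultaneously obstruct $B_n$, uniformly in $A$. Two such resonances $(\q_1, \p_1) \ne (\q_2, \p_2)$ with $\q_1, \q_2 \in \Sigma_k$ force
\[
\|\q_1 - \q_2\| < 2 T^{k+1}, \qquad \|A(\q_1 - \q_2) - (\p_1 - \p_2)\| \leq 2\rho_n + 2 \max_i r_{\q_i},
\]
so their differences lie in a symmetric convex body in $\R^N \oplus \R^M$ of volume $\asymp T^{kN}(\rho_n + T^{-kN/M})^M$, which by the calibration above is bounded uniformly in $k$. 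Since the lattice $\Lambda_A \df \{(\q, A\q - \p) : \q \in \Int^N, \p \in \Int^M\} \subset \R^{N+M}$ is unimodular, a standard lattice-point-counting estimate then bounds the number of such differences by a constant $K = K(\beta, M, N)$. Uniformity in $A$ is the delicate point: for degenerate $A$ (rational or rank-deficient) many resonances share a common center $A\q - \p$ and are simultaneously killed by a single hyperplane move, so the effective count only decreases; still, one must argue uniformly in the successive minima of $\Lambda_A$. This uniformity in $A$ is the essential new input beyond the case of $\Bad_{\0}(M,1)$ handled in \cite{BFKRW}.

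Granting the combinatorial lemma, the rest is routine: the geometric contraction of $\rho_n$ built into the game rules ensures $\bigcap_n B_n = \{\x_\infty\}$, and by construction $\x_\infty \notin F_{(\q,\p)}$ for any resonance, so $\x_\infty \in S_c \subset \Bad_A(M,N)$, proving the theorem.
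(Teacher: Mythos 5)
Your route is genuinely different from the paper's. The paper does not attack the resonances directly: it first proves (Theorem \ref{dani general}) that any escaping set $\tilde E(\cm,\cz)$ associated to a \emph{lacunary} sequence of matrices and a uniformly discrete target is HAW, and then invokes the relation $\tilde E(\cy,\Int)\subseteq\Bad_A(M,N)$, where $\cy$ is a lacunary sequence of $1\times M$ matrices built from the best-approximation sequence to $A$ (Cassels, Bugeaud--Laurent, as in \cite{BHKV}). The lacunarity of $\cy$ is exactly what guarantees that only $O(1)$ ``resonances'' fall into each radius-window, and the rational/rank-deficient case is dealt with separately (there $\Bad_A$ contains the complement of a countable union of hyperplanes). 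You instead try to control all integer resonances $(\q,\p)$ directly, shell by shell in $\|\q\|$, which is cleaner in spirit but puts the entire burden on the combinatorial lemma.

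That lemma is precisely where the proposal has a genuine gap, and as stated it is false. For a unimodular lattice, a symmetric convex body of volume $O(1)$ does \emph{not} contain $O(1)$ lattice points uniformly over all such lattices: the count scales like $\prod_i\max(1,\lambda_i^{-1})$, which blows up when the first successive minimum of $\Lambda_A$ is small (e.g. $A$ very well approximable, or nearly rational/rank-deficient). Your remark that in the exactly degenerate case many resonances share a center does not help in the nearly degenerate case, where the centers $A\q-\p$ are distinct but densely clustered and cannot be swept away by one thickened hyperplane of fixed width. Fixing this requires either adapting the shells to the successive minima of $\Lambda_A$ (which is morally what passing to the best-approximation subsequence accomplishes in the paper) or a separate argument for the degenerate locus; neither is supplied. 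A secondary, more cosmetic issue: you index blocks by the round number $n$ and assume $\rho_n\asymp\beta^n\rho_0$, but in the hyperplane absolute game the radii satisfy only $\rho_{n+1}\geq\beta\rho_n$, so $\bigcap_n B_n$ need not be a single point and the blocks should be keyed to radius thresholds (as the paper's ``stages'' are), not to $n$.
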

Applying Theorems 2.3, 2.4, 4.6, 4.7, and 5.3 from \cite{BFKRW}, we immediately deduce the following corollary:
\begin{cor}
\label{BadAincompressible}
Fix $M,N\in\Nat$ and $A\in \eM_{M\times N}$, and let $S = \Bad_A(M,N)$.
For any hyperplane diffuse $K \subseteq \Reals^M$, and for any sequence of $C^1$ diffeomorphisms $(f_i)_{i = 1}^\infty$, the set (\ref{intersectionwithK}) is winning on K. In particular (\ref{intersectionwithK}) has positive dimension, and full dimension if $K$ is the support of an Ahlfors regular measure. Thus, $\Bad_A(M,N)$ is strongly $C^1$ incompressible on $K$.
\end{cor}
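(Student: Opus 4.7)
The plan is to deduce the corollary from Theorem \ref{BadAHAW} by chaining the five black-box results from \cite{BFKRW} named in the statement; the hard analytic work has already been absorbed into the assertion that $S := \Bad_A(M,N)$ is HAW. Writing $T := \bigcap_{i=1}^\infty f_i^{-1}(S)$, I would first use the $C^1$ invariance of the HAW property (Theorem 2.4 of \cite{BFKRW}) to conclude that each pullback $f_i^{-1}(S)$ is HAW on $U$, and then apply the countable intersection property for HAW sets (Theorem 2.3 of \cite{BFKRW}) to conclude that $T$ itself is HAW on $U$.

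Next, because $K$ is hyperplane diffuse and meets $U$, I would invoke the principle that HAW subsets of $\Reals^M$ are winning, in the appropriate game-on-$K$ variant, on any hyperplane diffuse set which meets the ambient open set; this is packaged as Theorems 4.6 and 4.7 of \cite{BFKRW}. Applied to the HAW-on-$U$ set $T$ and the hyperplane diffuse $K$ with $K \cap U \neq \varnothing$, it yields that $K \cap T$ is winning on $K$, which is the first clause of the corollary.

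The two dimension consequences then follow from standard features of winning sets. Winning sets on any nonempty complete metric space have positive Hausdorff dimension, giving the "positive dimension" clause. For the "full dimension" clause, Theorem 5.3 of \cite{BFKRW} upgrades winning on $K = \supp(\mu)$ with $\mu$ Ahlfors regular to full Hausdorff dimension, so that $\dim(K \cap T) = \dim(K) = \dim(K \cap U)$ in that setting. Since $(f_i)$ was an arbitrary sequence of $C^1$ diffeomorphisms and $U$ an arbitrary nonempty open set meeting $K$, the concluding sentence of the corollary is a direct rewording of these dimension equalities against the definition of strong $C^1$ incompressibility on $K$.

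The argument is thus a chain of citations rather than fresh analysis. The only point that requires care is verifying that the "HAW on $U$" output of the pullback and countable intersection lemmas feeds correctly into the hyperplane-diffuse game framework used to restrict to $K$, and this compatibility is precisely the structural content of the bundle of five cited theorems from \cite{BFKRW}. Accordingly I do not anticipate any genuine obstacle beyond bookkeeping.
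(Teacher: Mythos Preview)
Your proposal is correct and follows exactly the paper's approach: the paper itself gives no proof beyond the sentence ``Applying Theorems 2.3, 2.4, 4.6, 4.7, and 5.3 from \cite{BFKRW}, we immediately deduce the following corollary,'' and you have simply unpacked the roles of those five citations in the expected order. One small caveat: the claim that winning sets on \emph{any} nonempty complete metric space have positive Hausdorff dimension is not true in general---the positive-dimension conclusion here relies on $K$ being hyperplane diffuse (this is what Theorems 4.6/4.7 of \cite{BFKRW} provide)---but since you are already assuming hyperplane diffuseness, the argument goes through.
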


One may also consider a slice in the other factor, i.e. the set $\Bad_{\x}(M,N)$ defined above. In the special case $\x = \0$, this set is called the set of \textsl{badly approximable systems of linear forms}.
This set was shown to be winning, and hence incompressible, by Schmidt in \cite{S2}.
Using similar techniques, the second-named author later proved in \cite{F2} that it is winning on sets which are the support of absolutely friendly measures (see \cite{KLW}). In the present note, we further adapt Schmidt's original proof idea in order to improve this result. Below we prove the following.

\begin{thm}
\label{maintheorem}
For every $M,N\in\Nat$, $\Bad_{\0}(M,N)$ is HAW.
\end{thm}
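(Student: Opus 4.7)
We intend to play the $\beta$-hyperplane absolute game on $\eM_{M\times N} \cong \R^{MN}$ for a sufficiently small parameter $\beta > 0$, and exhibit a strategy by which Alice drives the nested intersection point $A_\infty \in \bigcap_n B_n$ into $\Bad_\0(M,N)$. Fixing an auxiliary constant $c_0 > 0$ (small, depending on $\beta$), it suffices to ensure that $A_\infty$ avoids every resonant tube
\[
\mathcal{R}(\q, \p) \df \big\{A \in \eM_{M\times N} : \|A\q - \p\| \le c_0 \|\q\|^{-N/M}\big\}
\]
as $(\q, \p)$ ranges over $(\Int^N \nz) \times \Int^M$.

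The geometric observation is that $\mathcal{R}(\q, \p)$ is a tube of operator-norm thickness $\asymp c_0 \|\q\|^{-N/M - 1}$ around the codimension-$M$ affine subspace $\{A : A\q = \p\}$, and hence is contained in the $c_0\|\q\|^{-N/M-1}$-neighborhood of any hyperplane through that subspace. Whenever $\|\q\| \gtrsim (\beta\rho_n)^{-M/(M+N)}$, this thickness is at most $\beta\rho_n$, so Alice can cover $\mathcal{R}(\q, \p)$ with a single legal move at turn $n$.

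Alice's strategy is to schedule pairs by scale. Choose $T_n \asymp (\beta\rho_n)^{-M/(M+N)}$, so the window $[T_{n-1}, T_n)$ is calibrated to the tubes Alice can just barely cover at turn $n$. At turn $n$ she enumerates the pairs $(\q, \p)$ with $\|\q\| \in [T_{n-1}, T_n)$ and $\mathcal{R}(\q, \p) \cap B_n \ne \varnothing$, and removes a hyperplane strip containing one such tube. If the number of such pairs is bounded by a constant $K = K(M, N, \beta)$ independent of $n$, a standard batching argument (allotting $K$ consecutive game turns per scale, so Bob's ball shrinks by $\beta^K$ per scale rather than by $\beta$) lets her dispatch them all while respecting the HAW rules.

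The crux is the bound $K$. Arguing by contradiction, suppose $(\q_i, \p_i)$ for $i = 1, \ldots, K+1$ are all simultaneously problematic at turn $n$. Writing $A^*$ for the center of $B_n$, each satisfies $\|A^* \q_i - \p_i\| \le \rho_n \|\q_i\| + c_0 \|\q_i\|^{-N/M} \lesssim \rho_n^{N/(M+N)}$ by the calibration of $T_n$. A pigeonhole on the box $[-T_n, T_n)^N \cap \Int^N$, partitioned into cells of side $T_{n-1}$, produces indices $i \ne j$ with $\|\q_i - \q_j\| < T_{n-1}$ once $K > (2T_n/T_{n-1})^N \asymp \beta^{-MN/(M+N)}$; the nonzero pair $\q = \q_i - \q_j$, $\p = \p_i - \p_j$ then satisfies $\|\q\| < T_{n-1}$ and $\|A^* \q - \p\| \lesssim \rho_n^{N/(M+N)}$. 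A short calculation shows that, provided $c_0 \gtrsim \beta^{N/(M+N)}$, this forces $A^* \in \mathcal{R}(\q, \p)$. But by inductive hypothesis Alice already removed a hyperplane strip covering $\mathcal{R}(\q, \p)$ at the appropriate earlier scale, forcing $B_n$ to be disjoint from it, a contradiction. Carrying out this Schmidt-style transference cleanly — in particular tracking the mutual compatibility of $c_0$, $\beta$, and $K$ through the induction, and verifying that Alice can genuinely cover the enumerated tubes by her batched hyperplane moves at each scale — is the delicate point and constitutes the main technical obstacle.
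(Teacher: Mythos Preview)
Your approach has a genuine gap at the crucial step: showing that the difference pair $(\q,\p)=(\q_i-\q_j,\p_i-\p_j)$ is itself resonant, i.e.\ that $A^*\in\mathcal{R}(\q,\p)$. From $\mathcal{R}(\q_i,\p_i)\cap B_n\neq\varnothing$ and $\|\q_i\|\in[T_{n-1},T_n)$ one gets
\[
\|A^*\q_i-\p_i\|\le 2\rho_n\|\q_i\|+c_0\|\q_i\|^{-N/M}\le 2\rho_n T_n + c_0 T_{n-1}^{-N/M},
\]
hence $\|A^*\q-\p\|\le 4\rho_n T_n + 2c_0 T_{n-1}^{-N/M}$. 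You need this to be at most $c_0\|\q\|^{-N/M}$, but in the worst case $\|\q\|$ is close to $T_{n-1}$, so the target is $\approx c_0 T_{n-1}^{-N/M}$, and the inequality becomes $4\rho_n T_n + 2c_0 T_{n-1}^{-N/M}\le c_0 T_{n-1}^{-N/M}$, which is impossible for any $c_0>0$. No choice of $c_0\gtrsim\beta^{N/(M+N)}$ rescues this: the additive $2c_0 T_{n-1}^{-N/M}$ term alone already exceeds the right-hand side. Batching does not help either, since the number of integer $\q$ in a window scales like $(T_n/T_{n-1})^N\le\beta^{-K'MN/(M+N)}$ if you allot $K'$ turns per scale, so the required $K$ grows with $K'$ and the scheme does not close up.

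This is not merely a bookkeeping issue. Your argument is essentially the simplex-lemma route that works for $\Bad_{\0}(M,1)$ (the case handled in \cite{BFKRW}); it breaks for $N>1$ precisely because differencing resonant pairs does not produce a pair that is resonant at its own, coarser scale. The paper instead follows Schmidt's dual transference: one alternates between the primal system \textup{(\ref{list1})},\textup{(\ref{list2})} and the dual system \textup{(\ref{list3})},\textup{(\ref{list4})}, using Propositions~\ref{Schmidt first lemma} and~\ref{last lemma} to deduce that the relevant integer solutions at each stage span a subspace of dimension at most $N$ (resp.\ $M$). The heart of the argument is then Lemma~\ref{hard theorem}, an inductive statement about the minors of the matrix $(\B_u\cdot\Y_v)$, which lets Alice delete a single hyperplane per inductive step to force the determinant $D(A,\mathcal{Y})$ to be large enough that \textup{(\ref{last equation})} fails. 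This dual detour is exactly what replaces the failed ``difference is still resonant'' step.
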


\noindent In particular, using the same theorems from \cite{BFKRW} it is easy to deduce a corollary to this theorem similar to Corollary \ref{BadAincompressible}.

By inserting some additional steps into the winning strategy of \cite{F2}, M.\ Einsiedler and J.\ Tseng proved in \cite{ET} that in fact $\Bad_\x(M,N)$ is winning on the supports of absolutely friendly measures for all $\x \in \Reals^M$, not just $\0$. In fact, by combining the argument from \cite{ET} with the proof of Theorem \ref{maintheorem}, one can show that $\Bad_{\x}(M,N)$ is HAW:

\begin{cor}
\label{maintheoremgeneral}
For every $M,N\in\Nat$ and $\x\in\Reals^M$, $\Bad_{\x}(M,N)$ is HAW.
\end{cor}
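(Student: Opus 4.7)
The plan is to describe a winning strategy for Alice in the hyperplane absolute game played on $\eM_{M\times N}$ which forces the play's limit matrix to lie in $\Bad_{\x}(M,N)$; this is exactly what it means for the set to be HAW. I would start from the winning strategy for $\Bad_{\0}(M,N)$ constructed in the proof of Theorem \ref{maintheorem}, and modify it using the idea introduced by Einsiedler and Tseng in \cite{ET} to accommodate a nonzero translation $\x$.

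Fix $\beta \in (0, 1/3)$ and a small constant $c = c(\beta, M, N) > 0$ to be determined. Suppose Bob has just played a ball $B_i$ of radius $\rho_i$. The badly approximable condition fails at a matrix $A$ because of a pair $(\q, \p) \in (\Int^N \setminus \{\0\}) \times \Int^M$ precisely when $\|A\q - \x - \p\| \le c \|\q\|^{-N/M}$, i.e.\ when $A$ lies in a tubular neighborhood of the affine subspace
\[
H_{\q, \p} \df \{ A \in \eM_{M\times N} : A\q = \x + \p \},
\]
which has codimension $M \ge 1$ in $\eM_{M\times N}$. Partitioning the $\q$'s into annular scales tied to $\rho_i$, the counting argument of Schmidt \cite{S2}, as used in the proof of Theorem \ref{maintheorem}, shows that only one scale is ``active'' at stage $i$, and within that scale at most one pair $(\q,\p)$ produces an $H_{\q, \p}$ whose tube meaningfully intersects $B_i$. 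Alice then deletes the $\beta \rho_i$-neighborhood of a hyperplane containing $H_{\q, \p}$; since $H_{\q, \p}$ has codimension $M \ge 1$, it is contained in a hyperplane, so this is a legal HAW move.

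The modification required for $\x \ne \0$ is as follows: the translation by $\x$ destroys the $\Int^M$-invariance exploited in the counting argument for the case $\x = \0$, and occasionally two pairs $(\q, \p)$ and $(\q', \p')$ in the active scale must be dealt with simultaneously. Following \cite{ET}, I would interpolate an auxiliary move between the ``main'' moves in which Alice deletes a second hyperplane neighborhood handling the secondary obstruction. Each auxiliary move is again legitimate in the HAW framework because the obstruction $H_{\q', \p'}$ still has codimension $M \ge 1$ and is thus contained in a single hyperplane --- this is the point where switching from Schmidt's original game (in which Alice would have played a ball) to the hyperplane absolute game causes no trouble.

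The main obstacle is the bookkeeping of constants: one must verify that inserting auxiliary moves, together with the rescaling dictated by $\beta$ in the HAW game, still leaves some uniform $c' > 0$ for which the limit matrix $A_\infty$ satisfies $\|A_\infty \q - \x - \p\| > c' \|\q\|^{-N/M}$ for every admissible pair $(\q, \p)$. This is essentially the same calculation as in \cite{ET}, but performed in the HAW framework rather than for the ordinary Schmidt game: the annular scales must be calibrated so that the tube radius $c\|\q\|^{-N/M}$ is small compared to $\beta \rho_i$ at the moment the scale becomes active, and the interpolation of auxiliary moves must not disrupt this calibration. Once the constants close, Alice's strategy wins for every $\beta \in (0, 1/3)$, so $\Bad_{\x}(M,N)$ is HAW.
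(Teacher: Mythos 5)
The high-level plan---combine the Einsiedler--Tseng modification of Schmidt's strategy with the HAW argument for $\Bad_{\0}(M,N)$, relying on the fact that the extra obstructions introduced by $\x\neq\0$ still have codimension $\geq 1$ and hence are contained in hyperplanes---is exactly the route the paper is pointing at (the paper itself omits the details). Your closing observation that the switch from Schmidt's game to the HAW game causes no trouble for the auxiliary moves, because each obstruction lies in a single hyperplane, is the correct key point.

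However, your description of how the argument actually goes contains a serious misrepresentation that would sink a detailed write-up. You claim that ``within that scale at most one pair $(\q,\p)$ produces an $H_{\q,\p}$ whose tube meaningfully intersects $B_i$,'' and that Alice simply deletes one (or, after the ET interpolation, two) such tube at each stage. This is not what the proof of Theorem \ref{maintheorem} does, and it is not true. In an annulus $\|\q\| \sim T$ there are on the order of $T^N$ candidate vectors $\q$, and while the spacing argument does control the number of $\p$'s for each fixed $\q$, nothing controls the number of relevant $\q$'s. Precisely because one cannot count the obstructing pairs directly, Schmidt's argument is indirect: Propositions \ref{Schmidt first lemma} and \ref{Schmidt second lemma} show that the relevant integer vectors span a subspace of bounded dimension ($\leq N$ or $\leq M$), and Alice's strategy (via Lemma \ref{hard theorem}) is to force lower bounds on the minors $D_\omega(A)$ of the matrix $M(A,\mathcal Y)$ with respect to an orthonormal basis $\mathcal Y$ of that subspace. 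The hyperplanes Alice deletes are the zero sets of linearizations of these determinants, not the affine spaces $H_{\q,\p}$ themselves. Any adaptation to $\x\neq\0$ must preserve this determinant machinery; the ET interpolation adds auxiliary stages that keep a further collection of inhomogeneous dual forms bounded away from zero, and these also produce hyperplane-type obstructions. So your strategy's architecture needs to match that, rather than the direct tube-avoidance scheme you sketched. As written, the proposal relies on a counting claim that is false, and the ``bookkeeping of constants'' you defer is in fact the whole content of the determinant/minor induction.
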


\noindent We omit the proof for concision.\\

A natural question is whether $\Bad_{\0}(M,N)$ is incompressible on hyperplane diffuse sets. We answer the question in the negative in Example \ref{counterexample}.

The structure of the paper is as follows. In Section \ref{game}, we discuss Schmidt's game and variations thereof. In Section \ref{measures}, we discuss hyperplane diffuse sets and Ahlfors regular measures. We provide a proof of Theorem \ref{BadAHAW} in Section \ref{BadAproof}. Sections \ref{outline}-\ref{sectionhardtheoremproof} are devoted to the proof of Theorem \ref{maintheorem}, starting with an outline of the proof in Section \ref{outline}.\\

{\bf{Acknowledgments}}: The collaboration of this paper began during a workshop held in October 2011 at the University of North Texas Mathematics Department, which was partially sponsored by the RTG in Logic and Dynamics program through the NSF grant DMS-0943870. We would also like to thank Dmitry Kleinbock and Barak Weiss for reading the preliminary version of this paper and making important suggestions.

\ignore{

\noindent For the rest of this paper, 
if $U,V\in\mathbb{R}^{D}$ then $\left|U\right|$ 
is the usual vector length, i.e., $(\Sigma _{i=1}^{D}u_i^{2})^{\frac{1}{2}}$ and
$U\cdot V$ is the standard inner product. \\

}

\section{Schmidt's game and the hyperplane game}
\label{game}
In \cite{S1}, W.\ Schmidt introduced a game referred to thereafter as Schmidt's game,
and used it to define a property of subsets of a complete metric space, the $\alpha$-winning
property, which is stable under countable intersections and often
gives a lower bound on Hausdorff dimension. The game has proven to be a useful
tool for estimating dimension, due to the countable intersection property and its
stability under certain transformations. We will define the game
only on closed subsets of $\rd$ endowed with the Euclidean metric, as we will
only play the game on these spaces. This will allow us to simplify the presentation somewhat.

Let $K$ be any closed subset of $\rd$.
For any $0 < \alpha, \beta < 1$, the $(\alpha,\beta)$-game is played by two players, 
whom we will call Bob and Alice, 
who take turns choosing balls in $\rd$ whose centers lie in $K$, with Bob moving first. The players must play so as to satisfy
\[
B_1 \supseteq A_1 \supseteq B_2 \supseteq \dots,
\]
and
\begin{equation}
\label{radii}
\rho(A_i) = \alpha\rho(B_i)\text{ and }
\rho(B_{i+1}) = \beta\rho(A_i)\text{ for }i\in\Nat,
\end{equation}
where $B_i$ and $A_i$ are Bob's and Alice's $i$th moves, respectively, and where $\rho(B)$ is the radius of $B$.
Since the sets $B_i \cap K$ form a nested
sequence of nonempty, closed subsets of $K$ whose diameters tend to zero, it follows that $\bigcap_i B_i$ contains a single point, which must lie in $K$.
A set $S \subseteq X$ is said to be {\sl $(\alpha,\beta)$-winning on K\/} if Alice has a strategy guaranteeing that 
\begin{equation}
\label{Alicewins}
\bigcap_{i=1}^\infty B_i \subseteq S
\end{equation}
regardless of the way Bob chooses to play. It is said to be {\sl $\alpha$-winning\/} if it is $(\alpha,\beta)$-winning for every $0 < \beta < 1$, and {\sl winning\/} if it is $\alpha$-winning for some $\alpha$.

For each $\alpha > 0$, the class of $\alpha$-winning subsets of a given set $K$ 
is closed under countable intersection (see \cite{S1}). Furthermore, if $K$ is the support of an Ahlfors regular measure, then every winning set has full Hausdorff dimension (see \cite{F}). Taken together, these two
properties make Schmidt's game very useful in providing a lower bound on the dimension
of certain subsets of $\rd$: 
If a set in $\rd$ is naturally written as a countable intersection $S = \bigcap_i S_i$, then
bounding the dimension of each $S_i$ doesn't allow one to say anything about $S$, but
proving that each $S_i$ is $\alpha$-winning immediately implies that $\dim(S) = d$. 

Furthermore, many fractal subsets of $\rd$
prove to be hospitable playgrounds for the game -- namely,
hyperplane diffuse sets (see Section \ref{measures}). When the game is
played on such sets, there is a uniform lower bound
on the dimension of winning sets (see \cite{BFKRW}). Examples of hyperplane diffuse sets
include many well-known fractals, e.g. the middle-thirds Cantor set
and the Sierpinski triangle.

The winning property is thus very useful. It arises naturally
in both dynamics and Diophantine approximation \cite{Dani-rk1, D,
Dani conference, ET, Fae, FPS, KW2, KW3, M, S1, S2, T1, T2}.
We will be interested in a variant of Schmidt's game, the hyperplane game, defined in \cite{BFKRW}. Sets which are winning for the hyperplane game are called hyperplane absolute winning (HAW). For technical reasons we will also consider a third variant of Schmidt's game, which we shall call the hyperplane percentage game. As we shall see, in the hyperplane percentage game the rules are more favorable to the Alice, and so any HAW set is automatically winning for the hyperplane percentage game as well. We will show that the converse also holds (Lemma \ref{PHAWequivalence}). Thus, to show that a set is HAW, we will often find it more convenient to describe a strategy for Alice in the hyperplane percentage game.

\ignore{
\subsection{Variants of the game}
\label{variants}

Recently, several modifications of the game have been introduced with corresponding
winning properties. In \cite{Mc}, C. McMullen defined the {\sl{strong winning}}
and {\sl{absolute winning}} properties. In the former, the inequalities in (\ref{radii})
are replaced by
\begin{equation}
\label{strongradii}
\rho(A_i) \geq \alpha\rho(B_i)\text{ and }
\rho(B_{i+1}) \geq \beta\rho(A_i)\text{ for }i\in\Nat.
\end{equation}
If Alice has a strategy in this new game
that ensures that $\cap B_i$ meets $S$, the target set $S$ is said to $(\alpha,\beta)$-strong
winning, and $\alpha$-strong winning and strong winning are defined analogously.
Note that $\cap B_i$ is no longer necessarily a single point, as it is possible, for example,
to have $A_i = B_i = B_1$ for all $i \in \Nat$.

The latter property arises from a game, which we will call
the absolute game, in which, rather than choosing balls in the sequence,
Alice chooses balls to remove from Bob's choices. McMullen defines the game only on $\rd$.
As we will see, it can be adapted to be played on any closed subset of $\rd$,
but some care needs to be taken.
Let $0 < \beta < 1/3$.
In the game played on $\rd$, the players will choose closed balls $B_i$ and $A_i$ satisfying
$$B_1 \supseteq B_1\setminus A_1 \supseteq B_2 \supseteq B_2 \setminus A_2 \supseteq \dots.$$
and
\begin{equation}
\label{radiiabs}
\rho(A_i)\leq \beta\rho(B_i)\text{ and }\rho(B_{i+1})\geq\beta\rho(B_i)
	\text{ for }i\in\Nat.
\end{equation}
The set $S$ is called absolute winning if, for each $\beta \in (0,1/3)$, Alice
has a strategy to ensure that $\cap B_i$ meets $S$.

It is easy to see that absolute winning implies $\alpha$-strong winning for all $\alpha \in (0,1/2)$ and strong winning implies winning and that the absolute winning and $\alpha$-strong winning properties are closed under countable intersection. 
In addition, McMullen showed that
both classes of sets are preserved by quasisymmetric homeomorphisms of $\mathbb{R}^N$. 
Furthermore,
both classes contain some important sets, including $BA_1$.
However, $BA_d$
is easily seen {\sl{not}} to be absolute winning, since, in $\Reals^2$ for example, 
every number on the $x$-axis
is in the complement of $BA_2$, and Bob can always choose his ball to be centered on
this axis. To capture these and other sets and obtain stronger properties
than are enjoyed by winning and strong winning sets, we introduce the 
$k$-dimensionally absolute winning
property, generalizing McMullen's absolute winning property.

This game will be obtained by replacing the balls $A_i$ with neighborhoods of 
$k$-dimensional affine hyperplanes.
We will want to play the game 
on proper closed subsets of $\rd$ as with Schmidt's game and the strong game, 
forcing Bob to choose Ball's centered on $K$,
but for many such sets
the game will not always be able to proceed as described above.
For example, consider the trivial example $K = \{x\} \subseteq \rd$.
Bob begins the game by choosing $B_1 = B(x,\rho)$.
If Alice chooses a set $A_1$ containing $x$, then
$B_1 \setminus A_1$ contains no elements of $K$, 
so it is impossible for Bob to make a valid second move.
We resolve this issue by declaring Bob the winner in these situations.
Later we will discuss conditions on $K$ which guarantee that Bob will never
win in this manner.

Let us define now define the game on an 
arbitrary closed $K \subseteq \rd$.
If $0 \le k < d$ and $0 < \beta < 1/3$, 
the $k$-dimensional $(\beta,p)$-game played on $K$ is defined as follows:
The players choose closed subsets
$$B_1 \supseteq B_1 \setminus A_1 \supseteq B_2 \supseteq B_2 \setminus A_2 \supseteq \dots$$
where $B_i$ are balls centered on $K$, $A_i = \cl_i^{(\varepsilon_i)}$ 
are neighborhoods of $k$-dimensional affine spaces, and
\begin{equation}
\label{radiiabsk}
\varepsilon_i \leq \beta\rho(B_i)\text{ and }\rho(B_{i+1})\geq\beta\rho(B_i)
	\text{ for }i\in\Nat.
\end{equation}
If the game is halted after finitely many moves because Bob has no valid moves to make,
then he is declared the winner. Otherwise, we obtain an infinite nested sequence
$$B_1 \supseteq B_1\setminus A_1 \supseteq B_2 \supseteq B_2 \setminus A_2 \supseteq \dots.$$
If $\cap B_i$ meets $S$, then Alice is declared the winner; otherwise Bob is.
If Alice has a strategy to win regardless of the way Bob plays, the 
set $S$ is called \textsl{$k$-dimensionally $\beta$-absolute winning on $K$},
and if there exists some $0 < \beta_0 \le 1/3$ such that
$S$ is $k$-dimensionally absolute winning for each $0 < \beta < \beta_0$,
then we say $S$ is \textsl{$k$-dimensionally absolute winning on $K$}.
Since a $0$-dimensional affine space is a point, the $0$-dimensional absolute game
is clearly just the absolute game, recapturing McMullen's definition.
The other extremal case, $k = d -1$, is also of special importance, as it defines the weakest
-- and thus easiest to derive -- winning property, but is strong enough to imply
strong $C^1$ incompressibility. We will therefore refer to the $(d-1)$-dimensional
absolute winning property as \textsl{hyperplane absolute winning} and abbreviate it
HAW.
}
Fix an integer $0 \le k \le d-1$,
parameters $0 < \beta < 1/3$ and $0 < p < 1$, and a target set $S$.
The $k$-dimensional $(\beta,p)$-game is defined as follows:
Bob begins as usual by choosing a closed ball $B_1 \subseteq \rd$.
Then, for each $i \geq 1$, once $B_i$ is chosen,
Alice chooses a finite sequence of $k$-dimensional affine subspaces $(\cl_{i,j})_{j = 1}^{N_i}$ and a finite sequence of numbers $(\varepsilon_{i,j})_{j = 1}^{N_i}$ satisfying $0 < \varepsilon_{i,j} \leq \beta\rho(B_i)$. Here $N_i$ can be any positive integer that Alice chooses. We will denote by $\cl_{i,j}^{(\varepsilon)}$ the $\varepsilon$-thickening of $\cl_{i,j}$. Bob then must choose a ball $B_{i+1} \subseteq B_i$
with $\rho(B_{i+1}) \geq \beta\rho(B_i)$
such that 
\[
B_{i+1} \cap \cl_{i,j}^{(\varepsilon_{i,j})}= \varnothing\text{ for at least }pN_i\text{ values of }j.
\]
That is, at each stage of the game, Alice chooses any number of neighborhoods of affine subspaces
she wants and Bob must choose his next ball disjoint from at least $p N_i$ of these.
Thus we obtain
as before a nested sequence of closed sets $B_1 \supseteq B_2 \supseteq \dots$
and declare Alice the winner if and only if $S \cap \bigcap_i B_i \neq \varnothing$.
Note that $\bigcap_i B_i$ need not be a single point in this game, since the radii
$\rho(B_i)$ are not forced to $0$.
If Alice has a strategy to win regardless of Bob's play, we say that $S$
is \textsl{$k$-dimensionally $(\beta,p)$-winning}.
If there exist $\beta_0 > 0$ and $0 < p < 1$ such that
$S$ is $k$-dimensionally $(\beta,p)$-winning for each
$0 < \beta < \beta_0$, we say that $S$ is
\textsl{$k$-dimensionally percentage winning}. In the case $k = d-1$,
we will say that $S$ is hyperplane percentage winnning (HPW) and call this the HPW property\footnote{In
fact, this is the only version of the game we will consider; we include the more general definition
to be consistent with the analogous $k$-dimensional absolute winning properties defined
in \cite{BFKRW}.}.

The $k$-dimensional absolute winning property can now be defined easily as a strengthening of
the one above. If, for some $0 \le k \le d-1$ and $0 < \beta < 1/3$, Alice has a strategy to win the above game while always choosing $N_i = 1$ (the value of $p$ does not matter), we say $S$ is {\textsl{$k$-dimensionally $\beta$-absolute winning}}, and if there exists a $\beta_0$ such that $S$ is 
$k$-dimensionally $\beta$-absolute winning for all $0 < \beta < \beta_0$, we
say that $S$ is {\textsl{$k$-dimensionally absolute winning}}. In the case $k = 0$, we simply say that $S$ is \emph{absolute winning}; such sets were considered by C. McMullen \cite{Mc}. In the case
$k = d-1$, we say that $S$ is {\textsl{hyperplane absolute winning}} (HAW). This definition agrees with the one given in \cite{BFKRW}.

Note that for large values of $\beta$, it is possible for Alice to leave Bob with no available moves after finitely
many turns. In \cite{BFKRW}, where the HAW game is defined on arbitrary closed subsets $K$ 
of $\rd$, this situation was resolved by proclaiming Bob the winner. It was noted there however
that if $K$ satisfies a geometric condition called hyperplane diffuseness, 
then for sufficiently small $\beta$ this situation will never arise.\footnote{To see this for $\rd$, in the greater generality of the HPW game, suppose Bob chooses a move at random.
Then the expected number of neighborhoods that he will intersect is less than $C N_i\beta$ for some constant $C$. Thus by Markov's inequality, the probability that he will intersect at least $(1 - p)N_i$ of the neighborhoods is less than $C N_i\beta/[(1 - p)N_i] = C\beta/(1 - p)$. For $\beta$ sufficiently small this is strictly less than one, and so it is possible for Bob to intersect fewer than $(1 - p)N_i$ of the neighborhoods.} Since we will only play on $\rd$, proclaiming Alice the winner instead will not affect the class of sets which are HPW or HAW. In our proofs we will use this modified version of the game, so as to avoid technicalities. In particular:
\begin{itemize}
\item We do not have to check that Alice is leaving Bob with legal moves.
\item We can without loss of generality assume that Alice always chooses $\varepsilon_{i,j} = \beta\rho(B_i)$, since this places the maximum restriction on Bob's balls.
\item If Alice wins the $k$-dimensional $(\beta,p)$-game, then she automatically wins the $(\beta',p')$-game whenever $\beta'\geq \beta$ and $p'\geq p$.
\end{itemize}

\begin{lemma}
\label{PHAWequivalence}
For each $0\leq k\leq d - 1$, a set $A\subseteq \Reals^d$ is $k$-dimensionally percentage winning if and only if it is $k$-dimensionally absolute winning.
\end{lemma}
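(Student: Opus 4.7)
The easy direction, that $k$-dimensionally absolute winning implies $k$-dimensionally percentage winning, is almost immediate. If $S$ is $k$-dimensionally $\beta$-absolute winning, Alice plays the $(\beta,p)$-percentage game by setting $N_i=1$ at every round and choosing the single neighborhood prescribed by her absolute strategy. The percentage rule then forces Bob to satisfy the disjointness condition for at least $pN_i=p<1$ values of $j$; since this count is a nonnegative integer, it must be at least $1$, i.e., Bob must avoid Alice's unique neighborhood. The game thereby reduces exactly to the $\beta$-absolute game, and Alice wins.

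For the nontrivial direction, assume $S$ is $(\beta,p)$-percentage winning for every $\beta<\beta_0$. Given $\eta$ for the intended $\eta$-absolute game, choose a positive integer $T$ with $\eta^T<\beta_0$ and set $\beta:=\eta^T$. The plan is to partition the absolute rounds into consecutive blocks of length $T$, each block simulating one round of the $(\beta,p)$-percentage game. At the start of block $j$, with Bob's current absolute ball $B^a$, Alice treats $B^a$ as the virtual percentage ball, queries the percentage strategy to obtain a list of $N_j$ neighborhoods of width $\beta\rho(B^a)$, and plays them one per absolute round over the next $T$ rounds (padding with hyperplane neighborhoods located outside $B^a$ whenever $N_j<T$). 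In the $k$-th absolute round of the block ($0\le k<T$), Bob's ball has radius at least $\eta^k\rho(B^a)$, so Alice's width $\beta\rho(B^a)=\eta^T\rho(B^a)\le\eta\cdot\eta^k\rho(B^a)$ respects the $\eta$-absolute width constraint. After $T$ absolute rounds Bob's ball has radius at least $\eta^T\rho(B^a)=\beta\rho(B^a)$ and is disjoint from all $N_j$ substantive neighborhoods, so it is a legal response in the virtual $(\beta,p)$-percentage game (avoiding all, hence at least $pN_j$, of them). Alice records this as Bob's virtual percentage move and continues. Because she wins the virtual percentage game, the intersection of the absolute-game balls meets $S$.

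The main obstacle is that this simulation requires the uniform bound $N_j\le T$ for every block. If Alice's percentage strategy produces unbounded $N_j$ along some branch of the game tree, no single $T$ suffices. I would resolve this by a preliminary reduction showing that any $(\beta,p)$-percentage winning strategy can be replaced by one with $N_j$ uniformly bounded by some $N^*=N^*(\beta,p,d)$. A natural route is a geometric subsampling argument: given any list of $N$ hyperplane neighborhoods of width $\beta\rho(B)$ inside $B$, an averaging/Markov argument over centers in $B$ produces a sub-collection of size at most $N^*$ whose set of admissible Bob-responses, up to the fraction $p$, coincides with that of the original list, so Alice loses nothing by restricting to lists of length at most $N^*$. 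Once such $N^*$ is in hand, one picks $T\ge N^*$ in the above simulation and obtains an $\eta$-absolute winning strategy for all sufficiently small $\eta$, completing the equivalence. Establishing the bound $N^*$ is the technical heart of the argument; everything else is bookkeeping.
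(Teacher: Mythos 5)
Your easy direction is fine and matches the paper. For the hard direction, the block-simulation idea you sketch is sound in outline, but it genuinely requires a uniform bound $N^*$ on the number $N_j$ of neighborhoods that Alice's percentage strategy uses per round, and you explicitly leave that bound unproved. That is not a detail: a priori, a percentage-winning strategy could choose arbitrarily many hyperplane neighborhoods at each step, and the Markov/averaging argument you allude to (which mirrors the footnote in the paper about Bob having legal moves) bounds the \emph{fraction} of neighborhoods a random ball can meet, not the \emph{number} of neighborhoods needed to encode the strategy. So the "subsampling" reduction, which you correctly flag as the technical heart, is missing.

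The paper fills exactly this gap by a different device. Instead of bounding $N_j$, it uses a compactness argument that lets Alice answer \emph{any} list of $k$-plane neighborhoods of thickness $\beta'\rho$ with a \emph{single} neighborhood of slightly larger thickness $\beta\rho$. Concretely, after rescaling Bob's ball to the unit ball $B$, the collection $X$ of sets $\cl^{(\beta')}\cap B$ is compact in the Hausdorff metric, and the open sets
$U_{\widetilde{\cl}} = \{\cl^{(\beta')}\cap B \in X : \cl^{(\beta')}\cap B \subseteq \mathrm{Int}(\widetilde{\cl}^{(\beta)})\}$
form an open cover, admitting a finite subcover $(U_{\widetilde{\cl}_i})_{i=1}^N$. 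Then for any list of $N_k$ percentage-game neighborhoods, pigeonhole forces at least $N_k/N$ of them to lie inside a single $\widetilde{\cl}_{i}^{(\beta)}$ (rescaled back to Bob's ball), so by taking $p=1/N$ and deleting only that one thickened plane, Alice has made a legal absolute move, and any Bob-response avoiding it automatically avoids at least $pN_k$ of the percentage-game neighborhoods. No bound on $N_j$ is needed and no block simulation occurs — each percentage move is translated to a single absolute move. (The paper does use a preparatory "block"-style trick similar in spirit to yours, but only for the much easier task of changing the parameter $p$ to $1/N$ while shrinking $\beta$, not for converting percentage moves into absolute ones.) To repair your proof you would essentially need to prove this compactness-plus-pigeonhole lemma, at which point you might as well adopt the paper's shorter one-move-for-one-move translation rather than the $T$-round block.
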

\begin{proof}
The backwards direction being trivial, let us suppose that $A$ is $k$-dimensionally percentage winning. Then for some $0 < p < 1$ and $\beta_0 > 0$, $A$ is $k$-dimensionally $(\beta,p)$-winning for all $0 < \beta < \beta_0$. We claim first that this is true for all $0 < p < 1$. Indeed, fix $0 < p' < 1$ and $0 < \beta' < \beta_0$, and let us play the $k$-dimensional $(\beta',p')$-game. Let $m\in\Nat$ be large enough so that $(1 - p')^m \leq 1 - p$, and let $\beta = (\beta')^m$. Then Alice can win the $k$-dimensional $(\beta,p)$-game. Translate Alice's strategy in the $(\beta,p)$-game into a strategy for the $(\beta',p')$-game by replacing each move that Alice makes in the $(\beta,p)$-game with a sequence of $m$ moves in the $(\beta',p')$-game. Specifically, if Alice deletes a set of $N_i$ $k$-planes in the $(\beta,p)$-game, then we will let her spend $m$ moves deleting the same set of $k$-planes in the $(\beta',p')$-game. Details are left to the reader.

Fix $0 < \beta < 1/3$, and let us play the $k$-dimensional $\beta$-absolute game. Fix any $0 < \beta' < \beta$ and consider the set
\[
X \df \{\cl^{(\beta')}\cap B:\cl\subseteq\Reals^d\text{ is an affine $k$-plane}\}\setminus\{\varnothing\},
\]
where $B = \overline{B}(\0,1)$ is the closed unit ball in $\Reals^d$. Notice that $X$ is compact in the Hausdorff metric.

For each affine $k$-plane $\widetilde{\cl}\subseteq\Reals^d$, consider the set
\[
U_{\widetilde{\cl}} \df \{\cl^{(\beta')}\cap B\in X:\cl^{(\beta')}\cap B\subseteq\mathrm{Int}(\widetilde{\cl}^{(\beta)})\}.
\]
Here $\mathrm{Int}(\widetilde{\cl}^{(\beta)})$ is the interior of $\widetilde{\cl}^{(\beta)}$. It is not hard to see that $U_{\widetilde{\cl}}$ is an open subset of $X$ containing $\widetilde{\cl}^{(\beta')}\cap B$.
Thus, $(U_{\widetilde{\cl}})_{\widetilde{\cl}}$ is an open cover of $X$. Since $X$ is compact, there exists a finite subcover. Let $N$ be the size of such a subcover $(U_{\widetilde{\cl}_i})_{i = 1}^N$, and let $p = 1/N$. As described in the first paragraph, $A$ is $k$-dimensionally $(\beta',p)$-winning.

Consider a strategy for Alice to win the $k$-dimensional $(\beta',p)$-game by forcing the intersection point to land in $A$. We will translate every move that Alice makes in this strategy into a move in the $k$-dimensional $\beta$-absolute game, in such a way so that every legal move that Bob can make in the $k$-dimensional $\beta$-absolute game, which
we will call Game 1, is also a legal move in the other game, which we will call Game 2. 
Clearly, this implies that $A$ is $k$-dimensionally $\beta$-absolute winning.

Suppose that Bob has just made the move $B_k = B(\x,\rho)$ in Game 2. Since Alice has a winning strategy in this game, she makes the move $(\cl_j^{(\beta'\rho)})_{j = 1}^{N_k}$, for some $N_k\in\Nat$. Let $T:\Reals^d\rightarrow\Reals^d$ be an affine similarity with contraction ratio $\rho$ such that $T(\0) = \x$. Now each set $\cl_j^{(\beta'\rho)}$ can be viewed as $T(\mathcal{A}_j^{(\beta')})$ for some $k$-plane $\mathcal{A}_j$. In particular, $\mathcal{A}_j^{(\beta')}\cap B\in X$, so since $(U_{\widetilde{\cl}_i})_{i = 1}^N$ is a cover of $X$ there exists some $i_j = 1,\ldots,N$ such that
\begin{equation}
\label{ijbetacontained}
\mathcal{A}_j^{(\beta')} \subseteq \widetilde{\cl}_{i_j}^{(\beta)}.
\end{equation}
Now we can write $\{1,\ldots,N_k\} = \bigcup_{i = 1}^N \{j:i_j = i\}$, and applying the pigeonhole principle, there exists some $i = 1,\ldots,N$ such that
\begin{equation}
\label{pNk}
\#\{j = 1,\ldots,N_k: i_j = i\} \geq \frac{N_k}{N} = p N_k.
\end{equation}
We can now describe Alice's strategy in Game 1. Her strategy will be to remove the $k$-plane $T(\widetilde{\cl}_i^{(\beta)}) = T(\widetilde{\cl}_i)^{(\beta\rho)}$, where $i$ is any value satisfying (\ref{pNk}). To complete the proof, we need to show that any legal move that Bob can make in Game 1 is also legal in Game 2. Since $\beta' < \beta$, it is clear that the size of Bob's ball is not an obstacle. Suppose that $B_{k + 1}$ is any move Bob makes that avoids the set $T(\widetilde{\cl}_i)^{(\beta\rho)}$; i.e. $B_{k + 1}$ is legal in Game 1. Then by (\ref{ijbetacontained}), we have that $B_{k + 1}$ also avoids the sets $\mathcal{A}_j^{(\beta')}$ for all $j = 1,\ldots,N_k$ such that $i_j = i$. But by (\ref{pNk}), this constitutes at least $p N_k$ sets that Bob is avoiding, so the move $B_{k + 1}$ is also legal in Game 2.
\end{proof}

Thus, the HAW and HPW classes are identical, and although some of our strategies below
are given for the hyperplane percentage game, the sets are all in fact shown to be HAW.
The main advantage of the hyperplane game over the classical Schmidt's game
is that it produces a class of sets which is closed under diffeomorphisms.
More precisely, we have the following theorem, which was proved in \cite{BFKRW}.

\begin{thm}
\label{stability}
 Let $S \subseteq \rd$ be  $k$-dimensionally  absolute  winning, 
 $U \subseteq \rd$ open, and
 $f: U \to \rd$ 
a $C^1$ nonsingular map.
Then $f^{-1}(S) \cup U^c$ is
$k$-dimensionally absolute winning.
\end{thm}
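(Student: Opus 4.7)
The plan is to pull back a winning strategy for $S$ through $f$, using the fact that $f$ is locally nearly affine. Let $\beta_0>0$ be such that $S$ is $k$-dimensionally $\beta$-absolute winning for every $\beta<\beta_0$. I will show that $f^{-1}(S)\cup U^c$ is $k$-dimensionally $\beta$-absolute winning for every $\beta$ below a threshold $\beta_1$, where $\beta_1$ depends on $\beta_0$ and local $C^1$ bounds of $f$.

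Alice plays in two phases. In the \emph{passing phase}, she makes moves of negligible width (say $\varepsilon_i=\beta\rho(B_i)/2^i$, scarcely restricting Bob) until Bob's ball $B_n$ lies inside some set $K_j$, where $K_1\subseteq K_2\subseteq\cdots$ exhausts $U$ by compact sets on which $\|Df\|$ and $\|(Df)^{-1}\|$ are bounded and $Df$ is uniformly continuous. If Bob never allows this, no limit point of $\bigcap_i B_i$ can lie in $U$, so $\bigcap_i B_i$ meets $U^c\subseteq f^{-1}(S)\cup U^c$ and Alice wins by default.

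In the \emph{transported phase}, fix $x_0$ to be the center of $B_n$, let $L=Df(x_0)$ (invertible by the nonsingular hypothesis), and run a parallel shadow $k$-dimensional $\beta_0$-absolute game in $\rd$ with target $S$. The correspondence is dictated by the affine map $T(x)=f(x_0)+L(x-x_0)$: given Bob's move $B_{i+1}$, I define shadow-Bob's ball $B'_{i+1}$ as a ball centered at $f(\mathrm{center}(B_{i+1}))$, contained in $f(B_{i+1})\cap B'_i$, with radius comparable to $\rho(B_{i+1})$ via the lower bound coming from $\|L^{-1}\|^{-1}$. When shadow-Alice produces a hyperplane neighborhood $\cl'^{(\varepsilon')}$, Alice plays $\cl^{(\varepsilon)}$ where $\cl=T^{-1}(\cl')$ is a $k$-plane and $\varepsilon$ is slightly larger than $\|L^{-1}\|\varepsilon'$, chosen large enough that the nonlinear error $\|f(x)-T(x)\|=o(\|x-x_0\|)$ on $B_n$ forces $f^{-1}(\cl'^{(\varepsilon')})\cap B_n\subseteq\cl^{(\varepsilon)}$. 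For $\beta$ small enough (depending on $\beta_0$, the modulus of continuity of $Df$ near $x_0$, and the smallness of $\rho(B_n)$ forced by the passing phase), both constraints $\rho(B'_{i+1})/\rho(B'_i)\geq\beta_0$ and $\varepsilon\leq\beta\rho(B_i)$ are satisfied.

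Shadow-Alice's winning strategy then yields $y\in S\cap\bigcap_i B'_i$. Since $B'_i\subseteq f(B_i)$, we have $y\in\bigcap_i f(B_i)=f(\bigcap_i B_i)$ by compactness of the $B_i$ and continuity of $f$, so $y=f(x)$ for some $x\in\bigcap_i B_i$, giving $x\in f^{-1}(S)\cap\bigcap_i B_i$ and Alice wins. The main technical obstacle will be managing the drift between $f$ and its linearization $T$ throughout an infinite play; this is handled by the observation that once the transported phase begins, the radii $\rho(B_i)$ are nonincreasing, so all subsequent moves remain on a scale where the $C^1$ error is absorbed into the chosen neighborhood widths.
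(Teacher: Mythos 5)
The paper itself does not prove this theorem --- it is quoted from \cite{BFKRW}, where it is established by a pullback argument in roughly the same spirit as yours: wait until Bob's ball is well inside $U$ and small, then translate Alice's winning strategy for $S$ through the $C^1$ map. So your overall plan is reasonable.

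However, there is a genuine gap in how you control the linearization error, and your closing sentence gives the wrong reason why the drift is manageable. You fix $x_0$ once (the center of $B_n$) and use the single affine map $T(x)=f(x_0)+L(x-x_0)$ for the entire remainder of the game. On $B_n$ the error $\|f(x)-T(x)\|$ is bounded, but it is \emph{not} $o(\rho(B_i))$: for $x\in B_i$ at distance $\approx \rho(B_n)$ from $x_0$ the error has fixed size of order $\omega\bigl(\rho(B_n)\bigr)\rho(B_n)$, where $\omega$ is a modulus of continuity of $Df$ near $x_0$. This quantity does not shrink as the game proceeds, whereas the widths you are allowed, $\varepsilon\le\beta\rho(B_i)$, do tend to $0$ whenever Bob chooses to decrease his radii (which you cannot prevent). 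Thus the claim that ``once the transported phase begins, the radii $\rho(B_i)$ are nonincreasing, so all subsequent moves remain on a scale where the $C^1$ error is absorbed'' is backwards: shrinking radii make absorption \emph{harder}, because the allowed neighborhood width shrinks while the error term stays put. The fix is to re-linearize: at step $i$ take $T_i$ to be the linearization of $f$ at the center of $B_i$ and translate shadow-Alice's hyperplane through $T_i^{-1}$; then the error over $B_i$ is $\omega(\rho(B_i))\rho(B_i)=o(\rho(B_i))$ and is absorbable once $\rho(B_n)$ is below a threshold depending on the local modulus of continuity of $Df$. This in turn means the passing phase must wait not just for $B_n\subseteq K_j$ but also for $\rho(B_n)$ to drop below that threshold, and you must separately handle the case in which Bob keeps the radii bounded away from $0$ and that never happens (there $\bigcap_i B_i$ is a nondegenerate ball; if it meets $U^c$ you win outright, and otherwise $f$ is open and $S$ is dense since it is absolute winning, so $f^{-1}(S)$ meets the interior of $\bigcap_i B_i$).
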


\section{Hyperplane diffuse sets and Ahlfors regular measures}
\label{measures}
In this section we consider subsets of $\rd$ which,
when used as playgrounds for Schmidt's game,
permit strategies which involve avoiding neighborhoods of specified hyperplanes.
\ignore{In fact, in the literature these classes of measures were defined
prior to the geometric condition of diffuseness, and their supports}
These were the first fractals on which Schmidt's game was played, in \cite{F}.
We begin with a definition introduced in \cite{BFKRW}:
\begin{defn}
\label{diffuseness}
A closed set $K\subset\rd$ is said to be {\sl $k$-dimensionally $\beta$-diffuse\/} (here $0 \leq k < d$, $0<\beta<1$) if there exists 
$\rho_K > 0$ such that for any $0 < \rho \le \rho_K$, $\x\in K$,
and any $k$-dimensional affine subspace $\cl$, there exists $\x' \in K$ such that
$$
\x' \in B(\x,\rho)\ssm \cl^{(\beta\rho)}.
$$
We say that $K$ is 
{\sl $k$-dimensionally diffuse\/} if it is $k$-dimensionally $\beta_0$-diffuse for some $\beta_0 < 1$
(and hence for all $\beta \le \beta_0$). When $k = d-1$, this property will be referred to as {\sl hyperplane diffuseness}; clearly it implies $k$-dimensional diffuseness for all $k$.
\end{defn}
\ignore{
 For each $C, \gamma > 0$,
a locally finite Borel measure $\mu$ on $\rd$ is called 
\textsl{$(C,\gamma)$-absolutely decaying} if there exists $\rho_0 > 0$ such that for
each $0 < \rho < \rho_0$, $\x \in \supp\mu$, affine hyperplane $\cl \subseteq \rd$, 
and $\varepsilon > 0$,
\begin{equation}
\label{decay}
\mu\left(B(\x,\rho) \cap \cl^{(\varepsilon\rho)} \right) 
	< C\varepsilon^\gamma\mu(B(\x,\rho)).
\end{equation}
We call $\mu$ {\sl absolutely decaying} if it is $(C,\gamma)$-absolutely decaying
for some $C, \gamma > 0$. 
}
\ignore{
We exploit the measure-theoretic approach for two reasons. First, it gives us access to
a large class of sets in the literature that have been shown to support certain measures,
which will immediately imply their diffuseness. Second, we need to put a condition stronger
than diffuseness on the playground in order to obtain to obtain the full dimension of winning
sets (see Chapter \ref{dimension}).
}

\ignore{
\begin{prop}
\label{diffuse}
Let $\mu$ be absolutely decaying; then  $K = \supp\,\mu$ is hyperplane diffuse (and hence also $k$-dimensionally diffuse for all $1 \le k < d$).
\end{prop}

\begin{proof}
Let $K=\supp \mu$. If $B=B(\x,\rho)$, where we assume $\rho<\rho_0$, and 
if $\cl$ is an affine hyperplane, then 
\[ 
\mu\left(B\big(\x,\rho\big) \cap \cl^{(\beta\rho)}\right)< C\beta^\gamma\mu\left(B\big(\x,\rho\big) \right).\] 
If $\beta = \left(\frac{1}{C}\right)^{1/\gamma}$, then
$C\beta^\gamma < 1$ so there exists a point in the intersection of $K$ with 
$B\big(\x,\rho\big) \setminus \cl^{(\beta\rho)}$.
\end{proof}
}

Whenever a set is hyperplane diffuse, every HAW set will be winning for Schmidt's game, and every winning set will have positive dimension (see  \cite{BFKRW}). However, to obtain \emph{full} dimension, 
and hence strong $C^1$ incompressibility, we will need a further 
measure-theoretic assumption on $K$, Ahlfors regularity.

We say a locally finite Borel measure $\mu$ is {\sl $\delta$-Ahlfors regular}
if there exist positive constants $c_1, c_2,$ and $\rho_0$ such that
\begin{equation}
\label{Ahlfors}
c_1\rho ^{\delta}\leq\mu\big(B(\x,\rho)\big)\leq c_2\rho^{\delta}\,
						\quad\forall\,\x\in \supp \, \mu,\ \forall\,0 < \rho<\rho_0 \,.
\end{equation}
						
\noindent Again we will often refer to a measure as simply {\sl Ahlfors regular} when
the parameter is immaterial for our purposes.

The following theorem gives a large class of examples of hyperplane diffuse sets supporting Ahlfors regular measures which includes e.g. the Cantor middle-thirds set and the Sierpinski carpet:
\begin{pro}
\label{propositiondecayingregular}
Let $\{u_1,\ldots,u_m\}$ be a family of contracting similarities of $\R^d$ satisfying the open set condition, and let $K$ be the limit set of this family. If $K$ is not contained in any affine hyperplane, then the Hausdorff measure in the appropriate dimension restricted to $K$ is Ahlfors regular and absolutely decaying. In particular, $K$ is hyperplane diffuse.
\end{pro}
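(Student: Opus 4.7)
The plan is to handle the two assertions of the proposition separately: Ahlfors regularity of $\mathcal{H}^\delta|_K$, which is classical, and absolute decay, which carries the main content and from which hyperplane diffuseness follows at once. For Ahlfors regularity I would simply invoke the classical theorem of Moran and Hutchinson, sharpened by Schief, that for any IFS of contracting similarities on $\R^d$ satisfying the OSC, the Hausdorff measure $\mathcal{H}^\delta|_K$ is $\delta$-Ahlfors regular, where $\delta$ is the unique solution of $\sum_i r_i^\delta = 1$ and $r_i$ is the contraction ratio of $u_i$.

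The heart of the proof is a fixed-scale non-degeneracy lemma: there exists $\beta_0 > 0$ such that for every affine hyperplane $\cl \subseteq \R^d$ there is a point $y \in K$ with $d(y,\cl) \geq \beta_0 \cdot \mathrm{diam}(K)$. I would prove this by a compactness argument, restricting without loss of generality to hyperplanes meeting a fixed compact neighborhood of $K$ (others are automatically far from $K$), parametrizing them by a basepoint in a bounded region together with a unit normal so that the parameter space is compact, and noting that the map $\cl \mapsto \sup_{y\in K} d(y,\cl)/\mathrm{diam}(K)$ is continuous and strictly positive (since $K$ lies in no hyperplane), hence attains a positive minimum. To transport this to arbitrary scales I would use the self-similar structure: for any $\x \in K$ and small $\rho$, one can find a word $\omega = (i_1,\dots,i_n)$ with $u_\omega(K) \subseteq B(\x,\rho)$ and $\mathrm{diam}(u_\omega(K)) \geq c_3 \rho$ for some universal $c_3 > 0$, and then apply the lemma to the similar copy $u_\omega(K)$ together with the hyperplane $u_\omega^{-1}(\cl)$. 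Combined with Ahlfors regularity (a ball of fixed fraction of the radius around the good point has $\mu$-mass comparable to $\mu(B(\x,\rho))$), this yields constants $\sigma < 1$ and $\beta_1 > 0$ such that
\[
\mu\bigl(B(\x,\rho) \cap \cl^{(\beta_1\rho)}\bigr) \leq \sigma\, \mu(B(\x,\rho))
\]
uniformly in $\x \in K$, in small $\rho$, and in $\cl$.

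Finally, I would iterate this one-step inequality to obtain absolute decay. Given $\varepsilon > 0$ small, using the OSC I would partition $B(\x,\rho) \cap K$ up to $\mu$-null sets into essentially disjoint cylinders $u_{\omega'}(K)$ of diameter comparable to $\beta_1 \rho$, apply the one-step inequality on each cylinder to gain a factor of $\sigma$ at the next scale $\beta_1^2 \rho$, and repeat roughly $n \sim \log_{1/\beta_1}(1/\varepsilon)$ times; this yields $\mu(B(\x,\rho) \cap \cl^{(\varepsilon\rho)}) \leq \sigma^n \mu(B(\x,\rho)) = C\varepsilon^\gamma \mu(B(\x,\rho))$ with $\gamma = -\log\sigma/\log(1/\beta_1) > 0$. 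Hyperplane diffuseness is an immediate consequence: choosing $\varepsilon$ so small that $C\varepsilon^\gamma < 1$ forces $B(\x,\rho) \setminus \cl^{(\varepsilon\rho)}$ to meet $K$. The main technical obstacle I anticipate is the bookkeeping in the iteration: one must ensure the cylinder decomposition used at each scale is genuinely compatible with the hyperplane being approximated (so that the one-step inequality really reduces the relevant measure by a factor of $\sigma$ at each step), and that the scale losses between successive applications compound into a universal exponent $\gamma$ rather than degrading with the number of iterations.
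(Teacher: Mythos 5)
Your approach is correct in outline, but it takes a genuinely different route from the paper. The paper does not reprove absolute decay from scratch: it simply cites Theorem 2.3 of \cite{KLW}, which already establishes absolute decay for self-similar measures under the OSC. The subtlety is that \cite{KLW}'s hypothesis is not ``$K$ is not contained in any affine hyperplane'' but rather ``there is no finite collection of proper affine subspaces $\cl_1,\ldots,\cl_k$ invariant under the family $\{u_1,\ldots,u_m\}$.'' The entire content of the paper's proof is a short bridge showing these hypotheses are equivalent: if such an invariant collection exists and has nonempty common intersection, that intersection is itself an invariant affine subspace so $K$ sits inside it; if the common intersection is empty, one extracts a pair of subcollections at positive distance from each other, and since some power $u_1^{k!}$ fixes every $\cl_i$ while strictly contracting, it cannot preserve that distance --- a contradiction.

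Your proposal, by contrast, essentially reconstructs the proof of the \cite{KLW} theorem itself: a compactness argument to get a fixed-scale transversality constant, self-similarity to transport it to all scales and all cylinders, a one-step measure drop, and iteration to upgrade the one-step drop to a power-law. This is the right skeleton, and the ``bookkeeping'' obstacle you flag at the end is precisely the nontrivial content of \cite{KLW} Theorem 2.3 --- so while the plan is sound, carrying it out rigorously amounts to reproving a cited theorem rather than deducing the statement from it. What you gain is self-containedness and avoiding the hypothesis-matching issue entirely (since you work directly from ``$K$ not in a hyperplane''); what you lose is the economy of the paper's argument and the observation --- which is the only genuinely new ingredient here --- that the geometric non-degeneracy condition of \cite{KLW} reduces, via a contraction-cannot-preserve-a-positive-distance argument, to the simpler condition that $K$ does not lie in a single hyperplane. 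If you want your version to stand on its own, you would still need to spell out the cylinder decomposition and the scale-matching in the iteration; the paper sidesteps all of that by citation.
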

\begin{proof}
The proof of Ahlfors regularity can be found in \cite{Mat} (Theorem 4.14), and absolute decay was proven in \cite{KLW} (Theorem 2.3), with the assumption that $K$ is not contained in any affine hyperplane replaced by the assumption that there is no finite collection of proper affine subspaces $\cl_1,\ldots,\cl_k$ which is invariant under the similarities $u_1,\ldots,u_m$. In fact, these seemingly different assumptions are actually equivalent. Indeed, suppose that there is such a collection $\cl_1,\ldots,\cl_k$. If $\bigcap_{i = 1}^k \cl_i\neq\varnothing$, then $K\subseteq \bigcap_{i = 1}^k \cl_i$ since this subspace is invariant under the family of similarities. On the other hand, if $\bigcap_{i = 1}^k \cl_i = \varnothing$, then there exists some $N < k$ such that $\bigcap_{i = 1}^N \cl_i\neq\varnothing$ but $\bigcap_{i = 1}^{N + 1} \cl_i = \varnothing$. In particular
\begin{equation}
\label{distancecontradiction}
d\left(\bigcap_{i = 1}^N \cl_i,\cl_{N + 1}\right) > 0.
\end{equation}
On the other hand, since $u_1$ must permute the subspaces $\cl_1,\ldots,\cl_k$, its iterate $u_1^{k!}$ must leave each subspace invariant. But then $u_1^{k!}$ must preserve the distance (\ref{distancecontradiction}) above, which is a contradiction since $u_1^{k!}$ is a strict contraction.
\end{proof}
\ignore{
\begin{thm}
\label{incompr general}
Let $\mu$ be absolutely decaying and Ahlfors regular, and let
$S\subseteq \rd$ be a HAW set; then   
$S$ is strongly $C^1$ incompressible on $\supp\,\mu$.
\end{thm}
\begin{lem}
\label{log turns}
For every $C, \gamma>0$ and
\begin{equation}
\label{alpha}
\beta \leq \beta_0 = \frac{1}{2(2C)^{1/\gamma}+1},
\end{equation}
if $K$ is the support of a 
$(C,\gamma)$-absolutely decaying  measure $\mu$ on $\Reals^N$, and if we fix $0 < \rho < \rho_0$, $\x_1 \in K$, $N\in \Nat$, and if $\cl_1,\dots,\cl_N$ are hyperplanes in $\Reals^N$, then there exists $\x_2\in K$ with
\begin{equation}
\label{containment}
B(\x_2,\beta\rho) \subseteq B(\x_1,\rho)
\end{equation}
and
\begin{equation}
\label{distance}
B(\x_2,\beta\rho) \cap \cl_i^{(\beta\rho)}= \varnothing\quad\text{for at least 
half of the hyperplanes }\cl_i.
\end{equation}
\end{lem}

\begin{proof}
Let $A_i = B\big(\x_1,(1-\beta)\rho\big)\ssm \cl_i^{(2\beta\rho)}$.
By 
(\ref{decay}) and (\ref{alpha}), for each $1 \leq i \leq N$,
$$\frac{\mu(A_i)}{\mu\big(B(\x_1,(1-\beta)\rho)\big)} \geq 1 - C\left(\frac{2\beta_0}{1-\beta_0}\right)^\gamma = 1/2.$$
We claim there exist $j_1, \dots, j_k$, where 
$k\geq p N$, such that
$K \cap \bigcap_{i=1}^k A_{j_i} \neq \varnothing$.
To see this, let $f(\x) = \sum_{i=1}^N \chi_{A_i}(\x)$.
Then
$$\displaystyle\int_{B(\x_1,(1-\beta)\rho)} f(\x)\, d\mu(\x) \geq pN \mu\big(B(\x_1,(1-\beta)\rho)\big),$$
so clearly there exists some $\x_2 \in K$ with $f(\x_2) \geq pN$. Since
$f(\x_2) \in \Int$, there must exist $j_1,\dots,j_k$ as above.
Hence, $\x_2$ satisfies (\ref{containment}) and (\ref{distance}).
\end{proof}

Using this lemma, we can deduce the following.

\begin{lem}
\label{P-win inher}
Let $C, \gamma > 0$ and let $\beta_0$ be as in Lemma \ref{log turns}.
If $K \subseteq K'$ are closed subsets of $\rd$
	and $K$ supports a $(C,\gamma)$-absolutely decaying measure,
	then sets which are hyperplane $\beta'$-absolute
	winning on $K'$ for some $0 < \beta' < \beta_0$
	are also hyperplane $\beta''$-absolute winning
	for every $\beta' \le \beta'' \leq \beta_0$.
	
	In particular, every set which is hyperplane percentage winning on $\rd$ is hyperplane $(\beta,p)$-absolute	winning for each $\beta \leq \beta_0$. Analogous statements hold for the
	HAW property.
\end{lem}

\begin{proof}
Playing on the smaller set $K$ further restricts Bob's moves but not Alice's.
Thus Alice can use the same strategy she used to play on $K'$ and win, as long
as the game doesn't end in finitely many turns due to Bob's choices being restricted too much.
But Lemma \ref{log turns} guarantees that this won't happen.
\end{proof}

This lemma implies that for each $K$ there exists a $\beta_0 = \beta(K)$ such that
for all $0 < \beta < \beta_0$ and any set $S$ which is HPW (resp. HAW on $K$),
$S$ is hyperplane $(\beta,p)$-winning on $K$ (resp. hyperplane $\beta_0$-absolute winning on $K$). This permits one to mimic the proof of the countable intersection
property for winning sets (see \ref{S1}) to obtain the following.

\begin{lem}
\label{P-intersect}
Let $K$ be the support of an absolutely decaying measure.
Then the class of HPW sets and the class of HAW sets are each closed under countable intersection.
\end{lem}
}

We remark that $\Bad_{\0}(2,1)$ is not incompressible on hyperplane diffuse sets, as the following example illustrates:

\ignore{
The following result appears in \cite{F}.

\begin{lem}  \label{pl dim bound} \cite[Theorem 5.1]{F} Let $\mu$ be
  $\delta$-Ahlfors regular, let $K = \supp\,\mu$, and let $S\subseteq
  \rd$ be winning on $K$. Then $\dim(S\cap K \cap U)  = \delta =
  \dim(K \cap U)$ for every open set $U \subseteq \rd$ with $U \cap K
  \neq \varnothing$. 
\end{lem}

Since the HAW property clearly implies the winning property, this lemma combined
with Theorem \ref{stability} and Lemmas \ref{P-win inher} and \ref{P-intersect} yields the following.

\begin{thm}
If $\mu$ is absolutely decaying and Ahlfors regular, then any HAW set is strongly $C^1$-incompressible on $\supp\mu$.
\end{thm}
}

\begin{exa}
\label{counterexample}
There exists a hyperplane diffuse set $K\subseteq \R^2$ which supports an Ahlfors regular measure, and a bi-Lipschitz map $\Phi:\R^2\rightarrow\R^2$ such that $K\subseteq\Phi(\cl)$, where $\cl$ is the $x$-axis. In particular, $\R^2\setminus \cl$ is not incompressible on $K$, and
hence neither is $\Bad_{\0}(2,1) \subseteq \R^2 \setminus \cl$.
\end{exa}

\ignore{
Note that since $\Bad_{\0}(2,1)$ is contained in $\R^2\setminus \cl$, it follows that $\Bad_{\0}(2,1)$ is not incompressible on $K$, which demonstrates that Theorem \ref{maintheorem} cannot be improved by replacing ``strongly $C^1$ incompressible'' by ``incompressible''.
}

Also, note that $\R^2\setminus\cl$ is HAW, whereas $\Phi(\R^2\setminus\cl)$ cannot be HAW since $\Phi(\R^2\setminus\cl)$ does not intersect $K$ with full dimension (in fact it does not intersect it at all). Thus, in contrast with Schmidt's winning property, the HAW property is not preserved under bi-Lipschitz self-maps of the playground.

\begin{proof}
Let $K$ be the limit set of the family of contracting similarities $u_i:\R^2\rightarrow\R^2$ defined by
\begin{align*}
u_0(x,y) &:= \left(\frac{x}{5},\frac{y}{5}\right)\\
u_1(x,y) &:= \left(\frac{2 + x}{5},\frac{4 + y}{5}\right)\\
u_2(x,y) &:= \left(\frac{4 + x}{5},\frac{y}{5}\right).
\end{align*}
The open set condition is verified using the set $(0,1)\times(0,1)$. Since $K$ contains the points $(0,0)$, $(1/2,1)$, and $(1,0)$ (the fixed points of $u_0$, $u_1$, and $u_2$, respectively), it follows that $K$ is not contained in any affine hyperplane. Thus by Proposition \ref{propositiondecayingregular}, $K$ is hyperplane diffuse and supports an Ahlfors regular measure.

We claim next that the slope of any line that intersects $K$ in at least two points is at most $5$. This is obvious if the line intersects $K$ in two sets of the form $u_i([0,1]\times[0,1])$. A scaling argument proves the general case.

Thus, $K$ is the graph of a $5$-Lipschitz function on some closed subset of $[0,1]$. By linear interpolation, this function can be extended to a $5$-Lipschitz function $f':[0,1]\rightarrow[0,1]$, which can then be extended to another $5$-Lipschitz function $f:\R\rightarrow[0,1]$. Note that $K$ is contained in the graph of $f$. Let us now define the function $\Phi:\R^2\rightarrow\R^2$ by
\[
\Phi(x,y) = (x,y + f(x)).
\]
Then $\Phi(\cl)$ is exactly the graph of $f$, so $K\subseteq \Phi(\cl)$. Furthermore, since $f$ is $5$-Lipschitz we have that $\Phi$ is $6$-bi-Lipschitz.
\ignore{
Fix $0 < \varepsilon < 1/3$, and let $a,b$ be the unique numbers satisfying
\begin{align*}
2a + 3\varepsilon &= 1\\
a + 3\varepsilon b &= b.
\end{align*}
Let $f:\R\rightarrow [0,b]$ be the unique continuous function satisfying the formula
\[
f(x) = \begin{cases}
0 & x\leq 0\\
\varepsilon f(x/\varepsilon) & 0 \leq x \leq \varepsilon\\
\varepsilon b + (x - \varepsilon) & \varepsilon \leq x \leq a + \varepsilon\\
\varepsilon b + a + f((x - a - \varepsilon)/\varepsilon) & a + \varepsilon \leq x \leq a + 2\varepsilon\\
2\varepsilon b + a & a + 2\varepsilon \leq x \leq 2a + 2\varepsilon\\
2\varepsilon b + a + (x - 2a - 2\varepsilon) & 2a + 2\varepsilon \leq x \leq 1\\
b & x\geq 1.
\end{cases}.
\]
It is readily verified that $f(0) = 0$, $f(1) = b$, and that the formulas defining $f$ agree on the boundary, ensuring that $f$ is continuous. It is in fact clear that $f$ is Lipschitz continuous (with a constant of $1$).

Let $\Phi:\R^2\rightarrow\R^2$ be the map defined by
\[
\Phi(x,y) = (x,y + f(x)).
\]
Then $\Phi$ of the $x$-axis is the graph of $f$. Now, $\Phi$ is a bi-Lipschitz map. The complement of the $x$-axis is HAW, so to show that the HAW property is not invariant under bi-Lipschitz maps, it suffices to show that the complement of the graph of $f$ is not HAW.

Indeed, the set $K$ of points on the graph of $f$ at which no tangent line can be drawn is the limit set of the IFS
\begin{align*}
u_0(x,y) &= (\varepsilon x,\varepsilon y)\\
u_1(x,y) &= (a + \varepsilon + \varepsilon x,\varepsilon b + a + \varepsilon y)\\
u_2(x,y) &= (2a + 2\varepsilon + \varepsilon x,2\varepsilon b + a + \varepsilon y)
\end{align*}
which supports an absolutely decaying and Ahlfors regular measure. Therefore its complement cannot be HAW.
}
\end{proof}

\section{Proof of Theorem \ref{BadAHAW}}
\label{BadAproof}
We obtain Theorem \ref{BadAHAW} by proving a more general theorem concerning what we will
call escaping sets. Suppose that $\cm = (M_k)_{k\in\Nat}$ is a sequence of
$M\times N$ matrices with real entries and that $\cz = (Z_k)_{k\in\Nat}$ is a sequence of subsets of $\R^M$. Following \cite{BFK}, we define
\begin{equation}
\label{defE3}
\tilde E(\cm,\cz) = \{\x \in\Reals^N : \inf_{k\ge 0} \dist(M_k\x,Z_k) > 0\}.
\end{equation}
We will abuse notation slightly and write $\tilde E(\cm,Z) = \tilde E(\cm,\cz)$ if $\cz = (Z)_{k\in\Nat}$ is a constant sequence. Note that in this case, $\tilde E(\cm,\cz)$ consists of the points $\x\in\Reals^N$ whose orbit $\{ M_k \x\}$ 
under the sequence $\cm$ of linear maps remains some fixed distance from the
set $Z$.

In the case $M = N = 1$, $\cm$ is a sequence of reals
and it was shown by A.\ D.\ Pollington in \cite{P} and B.\ de Mathan in \cite{Ma} that
if this sequence is lacunary (i.e. $\inf_k \frac{M_{k+1}}{M_k} > 1$), then $\dim(\tilde E(\cm,\Int)) = 1$.
This was improved in \cite{BBFKW}, where it was shown that for any $y \in \Reals$
and any lacunary sequence $\cm$ of reals, $\tilde E(\cm,y + \Int)$ is a winning set in Schmidt's game.

In \cite{BFK}, the notion of a lacunary sequence was generalized to a sequence of matrices; specifically, a sequence of matrices $\cm$ is said to be lacunary if $\inf_k \frac{\|M_{k+1}\|_\op}{\|M_k\|_\op} > 1$, where
$\|\cdot\|_\op$ stands for the operator norm. This paper also introduced the notion of a \emph{uniformly discrete} sequence of sets, which is a sequence $\cz$ such that
\[
\inf_{k\in\Nat} \inf_{\substack{\x,\y\in Z_k \\\text{distinct}}}\|\x - \y\| > 0.
\]
It was shown that if $\cm$ is a lacunary sequence of matrices and if $\cz$ is a uniformly discrete sequence of sets, then $\tilde E(\cm,\cz)$ is winning on the support of any absolutely friendly measure. Finally, a relation between the sets $\tilde E(\cm,\cz)$ and $\Bad_A(M,N)$ was established: specifically, it was proven that
\begin{equation}
\label{BFKrelation}
\tilde E(\cy,\Int) \subseteq \Bad_A(M,N)
\end{equation}
for a certain lacunary sequence $\cy$ of $1\times M$ matrices which depends on $A$. Since $\cz = (\Int)_{k\in\Nat}$ is clearly a uniformly discrete sequence of sets, this implies that $\Bad_A(M,N)$ is winning on the support of any absolutely friendly measure.

We generalize this result by proving that $\tilde E(\cm,\cz)$ is HAW for any lacunary sequence of matrices $\cm$ and any uniformly discrete sequence of sets $\cz$. In particular, by (\ref{BFKrelation}) it follows that $\Bad_A(M,N)$ is HAW, which proves Theorem \ref{BadAHAW}.

\begin{thm}
\label{dani general}
If $\cm = (M_k)_{k\in\Nat}$ is a lacunary sequence of $M\times N$ matrices with real entries and if $\cz = (Z_k)_{k\in\Nat}$ is a uniformly discrete sequence of subsets of $\R^M$, then $\tilde E(\cm,\cz)$ is HAW.
\end{thm}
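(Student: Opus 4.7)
The plan is to exhibit a winning strategy for Alice in the hyperplane percentage game, which by Lemma \ref{PHAWequivalence} establishes HAW. Fix $\lambda > 1$ with $\|M_{k+1}\|_{\op} \geq \lambda \|M_k\|_{\op}$ and $s > 0$ a common separation for the sets $Z_k$. I will select parameters $\beta_0, p, c > 0$ depending on $\lambda, s, M, N$, and construct a strategy guaranteeing that $\dist(M_k \x, Z_k) \geq c$ for every $\x \in \bigcap_i B_i$ and every $k$, so that $\x \in \tilde E(\cm, \cz)$.

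The key geometric step linearizes the bad set. For each $k$, let $\mathbf{u}^{(k)} \in \R^M$ be a unit vector realizing $\|M_k^T \mathbf{u}^{(k)}\| = \|M_k\|_{\op}$, and for each $z \in Z_k$ define
\[
\cl_{k,z} := \{\x \in \R^N : \langle M_k^T \mathbf{u}^{(k)}, \x\rangle = \langle z, \mathbf{u}^{(k)}\rangle\}.
\]
Since $|\langle \mathbf{u}^{(k)}, M_k\x - z\rangle| \leq \|M_k\x - z\|$, one has
\[
\{\x : \|M_k\x - z\| < c\} \subseteq \cl_{k,z}^{(c/\|M_k\|_{\op})}.
\]
Thus the bad set at scale $k$ is covered by parallel hyperplane neighborhoods of thickness $c/\|M_k\|_{\op}$, all sharing the common normal direction $M_k^T \mathbf{u}^{(k)}$. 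This is the feature that turns the problem into one amenable to the hyperplane game.

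The second step bounds the number of relevant bad hyperplanes. Whenever $\|M_k\|_{\op}\rho_i$ is bounded above by a fixed constant $T$, the image $M_k(B_i)$ is contained in a ball of radius $T$ in $\R^M$, so by uniform discreteness at most a constant $N_0 = N_0(T,c,s,M)$ points of $Z_k$ have $\cl_{k,z}^{(c/\|M_k\|_{\op})}$ meeting $B_i$. For each $k$ I then define its \emph{addressing window} to be the set of stages $i$ at which $\rho_i \in [c/(\beta\|M_k\|_{\op}), T/\|M_k\|_{\op}]$; this interval is chosen so that Alice's permitted thickness $\beta \rho_i$ always covers the needed hyperplane thickness $c/\|M_k\|_{\op}$, while still at most $N_0$ hyperplanes are relevant. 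At each stage in $k$'s window, Alice lists the relevant still-un-avoided hyperplanes as her move in the hyperplane percentage game; with $p$ taken close enough to $1$, iterating across the window ensures that by its end, Bob's ball is disjoint from every scale-$k$ bad hyperplane neighborhood, and this property persists to all future $B_j$.

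Once each $k$ has been addressed, every $\x \in \bigcap_i B_i$ satisfies $\dist(M_k\x, Z_k) \geq c$ for all $k$, so $\x \in \tilde E(\cm,\cz)$. I expect the main obstacle to lie in the combinatorics of the third step: one must choose $\beta_0, p, c$ so that each addressing window contains enough stages for the percentage-game iteration to clear its $\leq N_0$ bad hyperplanes, and so that the windows for consecutive $k$---whose spacing is governed by the lacunarity ratio $\lambda$---chain together without Alice falling behind. The crucial levers are that shrinking $c$ (say $c = c_0\beta^L$ for large $L$) widens each addressing window, and that lacunarity provides a fixed geometric scale $\lambda$ separating successive windows; balancing these two against the uniform bound $N_0$ is what will ultimately yield a valid strategy for every sufficiently small $\beta$.
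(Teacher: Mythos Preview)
Your linearization (step 1) and the counting of relevant $z\in Z_k$ (step 2) are correct and match the paper's argument; the paper simply arranges $\rho(B_1)$ and $c$ so that $N_0=1$, but this is cosmetic. The real issue is step 3, and here the per-$k$ overlapping windows you describe do not close. Consider an adversarial Bob who fixes one hyperplane $H=\cl_{k_0,z_0}$ and at every stage avoids every listed neighborhood except $H$. This response is legal whenever the list has length $U$ with $(1-p)U\ge 1$. If Bob shrinks by the maximal factor $\beta$ at each move, the number of indices $k$ whose window \emph{begins} at that move is of order $\log(1/\beta)/\log\lambda$, so fresh neighborhoods are injected into Alice's list at a rate that is unbounded as $\beta\to 0$. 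Hence for any fixed $p<1$, once $\beta$ is small enough the list stays above $1/(1-p)$ throughout $k_0$'s window and $H$ is never forced out. Shrinking $c$ widens each window but does not reduce this inflow rate (which depends only on $\beta$ and $\lambda$), so that lever alone cannot rescue the scheme.

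The paper's resolution is to replace overlapping per-$k$ windows by \emph{disjoint} blocks that each handle many $k$ at once. One fixes $p=1/2$, chooses $n$ (depending on $\beta$) and $r=\lfloor\log_2 n\rfloor+1$ with $\beta^{-r}\le Q^n$ (possible since $Q^n$ grows exponentially in $n$ while $\beta^{-r}$ grows only polynomially), and then groups together all $k$ with $t_k=\|M_k\|_{\op}$ in the range $[\beta^{-r(j-1)}t_1,\beta^{-rj}t_1)$; lacunarity gives at most $n$ such $k$, hence at most $n$ hyperplanes. Alice lists exactly this \emph{frozen} collection for $r$ consecutive moves, with no arrivals mid-block; after $r$ halvings at most $2^{-r}n<1$ survive, so all are avoided before the next block begins. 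Your outline becomes correct as soon as you adopt this block structure; the essential missing idea is to freeze the list per block rather than let it grow with newly active indices.
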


\ignore{
Before proving this theorem, we first use it to deduce Theorem \ref{BadAHAW}.

\begin{proof}[Proof of Theorem \ref{BadAHAW}]
Recall that we need to fix an $M\times N$ matrix $A$ and study the set
\[
\Bad_A(M,N) = \left\{ \x \in \Reals^M : \inf_{\q\in\Int^N\setminus \{0\}}
	\|\q\|^{N/M}\dist(A\q - \x,\Int^M) > 0\right\}\,.
\]
First observe  that the above set is easy to understand in the `rational' case
when there exists a nonzero $\ubf \in \Int^M$
such that $A^T\ubf \in \Int^N$ (or equivalently, when the rank of the group $A^T\Int^M + \Int^N$ 
is strictly smaller than $N+M$). 
In this case, since $\ubf\cdot\Int^M \subseteq \Int$, by Schwarz's inequality we have
\[
\dist(\ubf\cdot\x,\Int) \leq \|\ubf\| \dist(A\q - \x,\Int^M)
\]
for all $\q\in\Int^N$. In particular, if $\ubf\cdot\x$ is not an integer, then $\inf_{\q\in\Int^N}\dist(A\q - \x,\Int^M) > 0$, and so $\x$ is badly approximable. In other words
\[
\Bad_A(M,N)  \supseteq \{\x\in \Reals^M : \ubf\cdot \x \notin \Int\}.
\]
Since the right-hand side is the complement of a countable union of hyperplanes
and the complement of a hyperplane is clearly HAW, the countable intersection property implies that
$\Bad_A(M,N)$ is HAW.

\smallskip
In the more interesting `irrational' case
when $\text{rank}(A^T\Int^M + \Int^N) = N+M$, 
one can utilize the theory of best approximations to $A$ as developed by Cassels
\cite[Ch.\ III]{C} and recently made more precise by Bugeaud and Laurent \cite{BL}.
In \cite[\S\S 5--6]{BHKV}, using results from \cite{BL}, it is shown that if 
$\text{rank}(A^T\Int^M + \Int^N) = N+M$, then
there exists a lacunary sequence of vectors $\y_k \in \Int^M$ (a subsequence of the sequence of
best approximations to $A$) such that whenever $\x\in\Reals^M$ satisfies 
$$\inf_{k\in\Nat}\dist(\y_k\cdot \x, \Int) > 0\,,$$
it follows that $\x\in \Bad_A(M,N)$. In other words, 
\[
\tilde E(\cy,0) \subseteq \Bad_A(M,N)\,,
\]
 where $\cy \df (\y_k^T)_k$. (See also \cite[\S2]{M} for an alternative exposition.)
Therefore in this case $\Bad_A(M,N)$ is HAW
by Theorem \ref{dani general}.
\end{proof}

We now show that escaping sets corresponding to lacunary sequences of matrices are HAW.
}

\begin{proof}
For each $k\in\Nat$ let $t_k \df \|M_k\|_{\op}$
and let $\vbf_k$ be a unit vector satisfying
$$ \|M_k\vbf_k\| =t_k.$$
 Let 
\begin{align} \label{ratio}
Q &\df \inf_{k\in\Nat} \frac{t_{k+1}}{t_k} > 1\\ \label{deltadef}
\delta &\df \inf_{k\in\Nat} \inf_{\substack{\x,\y\in Z_k \\\text{distinct}}}\|\x - \y\| > 0.
\end{align}
To show that $\tilde E(\cm,\cz)$ is HAW, we will demonstrate a strategy for Alice to win the hyperplane percentage game; specifically, for every $0 < \beta < 1$ we will demonstrate a strategy for the hyperplane $(\beta,1/2)$-game. Fix such a $\beta$, and choose $n$ large enough so that 
\begin{equation}
\label{large n}
\beta^{-r} \leq Q^n\text{, where } r =  \lfloor \log_{2} n\rfloor +1\,.
\end{equation}
Alice's strategy will be to divide the game into windows. For each $j,k\in\Nat$, we will say that $t_k$ \emph{lies in the $j$th window} if
\begin{equation}
\label{indices}
\beta^{-r (j-1)}t_1 \leq t_k  < \beta^{-r j}t_1.
\end{equation}
Note that every $t_k$ lies in exactly one window. On the other hand, if $j\in \Nat$ is fixed, then by (\ref{ratio}) and (\ref{large n}), 
there are at most $n$ indices $k$ for which $t_k$ lies in the $k$th window.

By playing arbitrary moves if needed, 
we may assume without loss of generality that Bob's first move $B_1$ has radius
\begin{equation}
\label{rho small}
\rho_1 < \beta^r\delta t_1/4.
\end{equation}
Now Alice will divide the game into stages in the following manner: The $j$th stage begins when the radius of Bob's ball $B_j$ satisfies
\[
\rho(B_j) \leq \beta^{r(j - 1)}\rho_1.
\]
Note that the first stage therefore begins with Bob's initial ball $B_1$. Also note that Bob must make at least $r$ moves in each stage.

Suppose that Bob has just played the ball $B_j$, beginning stage $j$. Fix $k\in\Nat$ such that $t_k$ lies in the $j$th window.
For any $\x \in\Reals^N$,
$ \| \x \| 
	\geq \frac{1}{t_k}\|M_k(\x)\|$.
Thus, if $\y_1,\y_2$ are two different points in  $Z_k$, then by (\ref{indices}) and (\ref{rho small})
\begin{equation}
\label{dist2}
\dist\Big(M_k^{-1}\big(B(\y_1,\delta/4)\big),M_k^{-1}\big(B(\y_2,\delta/4)\big)\Big) 
	\geq \frac{\delta/2}{t_k}
		> \frac{\delta}{2t_1}\beta^{rj} \geq 2\rho(B_j).
\end{equation}
Therefore $B_j$ intersects with at most one set of the form $M_k^{-1}\big(B(\z,\delta/4)\big)$, where
$\z\in Z_k$. Hence, for each $k$ satisfying (\ref{indices}),
\begin{equation}
\label{Z preimage}
B_j \cap M_k^{-1}(Z_k^{(c)}) 
	\subseteq M_k^{-1}\big(B(\y_k,c)\big) \text{ for some }\y_k\in Z_k,
\end{equation}
where
\begin{equation}
\label{defc}
c \df \min\left(\beta^{r + 1}\rho_1 t_1,\frac{\delta}{4}\right) > 0.
\end{equation}
(The reason for this value of $c$ will be clear shortly.) We will now show that the preimage of such a ball is contained in a ``small enough'' neighborhood of some hyperplane.
Toward this end,
let $V\subseteq\Reals^M$ be the hyperplane perpendicular to $M_k\vbf_k$ and passing through
$\0$. Then $$W \df M_k^{-1}(V)$$ is a hyperplane in $\Reals^N$ passing through $\0$.

If $\x\not\in W^{(c/t_k)}$, then $\x = \w + \eta \vbf_k$ for some
$\eta > c/t_k$ and $\w\in W$, thus
$$\|M_k\x\| = \|M_k\w + M_k\eta\vbf_k\| \geq \eta\|M_k\vbf_k\| = t_k\eta > c\,.$$
Hence, $M_k^{-1}\big(B({\mathbf 0},c)\big) \subseteq W^{(c/t_k)}$, which clearly implies $M_k^{-1}\big(B(\y_k,c)\big) \subseteq \cl^{(c/t_k)}$ where $\cl = M_k^{-1}(\y_k) + W$ is an affine hyperplane.
By (\ref{indices}) and (\ref{defc}),
\[
\frac{c}{t_k} \leq \beta^{rj + 1}\rho_1 \leq \beta^r\rho(B_j) \df \zeta.
\]
Therefore, by (\ref{Z preimage}),
\begin{equation}
\displaystyle\bigcup_{t_k \text{ in the $j$th window }} B_j\cap M_k^{-1}\big(Z_k^{(c)}\big)	\subseteq \bigcup_{i=1}^n \cl_i^{(\zeta)},
\end{equation}
where $\cl_i$ are hyperplanes.
Alice will choose these $n$ hyperplane neighborhoods as her next turn,
and on her subsequent $r - 1$ turns choose those which remain after intersecting with Bob's ball. The legality of the moves is guaranteed by the definition of $\zeta$. Also, as observed above, Bob must make at least $r$ moves in the $j$th stage and so these moves do not interfere with the next stage.

At the end of stage $j$, therefore, we have that the number of hyperplane-neighborhoods $\cl_i^{(\zeta)}$ which intersect Bob's ball $B_{j + 1}$ is at most $2^{-r}n$, but by (\ref{large n}) this number is strictly less than $1$. Thus $B_{j + 1}$ will be disjoint from the sets $\cl_i^{(\zeta)}$. Thus for $t_k$ in the $j$th window, we have
$$B_{j + 1} \cap M_k^{-1}\big(Z_k^{(c)}\big) = \varnothing\,.$$
We conclude that $\dist(M_k\x, Z_k) \geq c$ for any $\x \in B_{j + 1}$, 
which implies the desired statement.

\ignore{

By Lemma \ref{PHAWequivalence}, it suffices to give a strategy for the hyperplane percentage game.
For each $k\in\Nat$ let $t_k \df \|M_k\|_{\op}$
and let $\vbf_k$ be a unit vector satisfying
$$ \|M_k\vbf_k\| =t_k.$$
 Let 
 \begin{equation}
 \inf_k \frac{t_{k+1}}{t_k} = Q > 1.
 \end{equation}
 Now pick an arbitrary $0 < \beta < 1$
and choose $n$ large enough that 
\begin{equation}
\beta^{-r} \leq Q^n\text{, where } r =  \lfloor \log_{2} n\rfloor +1\,.
\end{equation}
We will denote
by $M_k^{-1}(Z)$ the preimage of a set $Z \subseteq \Reals^M$ under $M_k$.
Notice that for each $k\in\Nat$, $M_k^{-1}(Z_k)$ is contained in
a countable union of hyperplanes.
Since the complement of a hyperplane is clearly HAW,
the set $\Reals^N \setminus \bigcup_k M_k^{-1}(Z_k)$ is HAW.
Furthermore, every point $\x \in \Reals^N \setminus \bigcup_k M_k^{-1}(Z_k)$ satisfies $\dist(M_k\x, Z_k) > 0$ for every $k\geq 1$. 
Thus, the countable intersection property allows us to assume that 
that $t_1 \geq 1$.

By playing arbitrary moves if needed, 
we may assume without loss of generality that $B_1$ has radius
\begin{equation}
\rho < \min\left(\frac{\beta}{4},\rho_{0}\right),
\end{equation}
where $\rho_0$ is as in the definition of the absolute decay condition.
Now let \begin{equation}
c = \min\left(\rho \beta^{2r-1},\frac{1}{4}\right) > 0.
\end{equation}
We will describe a strategy for Alice to play the hyperplane P-game on $K$ and 
to ensure that for all $j\in\Nat$,  for all 
$\x \in B_{r(j+1)}$ and for all $k$ with  $1 \leq t_k < 
(\alpha \beta)^{-r j}$, one has
$\dist(M_k\x,Z_k) > c $. This will imply that 
$\bigcap_k B_k \in \tilde E(\cm,\y) \cap K$, finishing the proof.

To satisfy the above goal, Alice can choose $B_i$ arbitrarily for $i < r$.
Now fix $j\in \Nat$. By (\ref{ratio}) and (\ref{large n}), 
there are at most $n$ indices $k \in \Nat$ for which 
\begin{equation}
\beta^{-r (j-1)} \leq t_k  < \beta^{-r j}.
\end{equation}
Let $k$ be one of these indices.
For any $\x \in\Reals^N$,
$ \| \x \| 
	\geq \frac{1}{t_k}\|M_k(\x)\|$.
Thus, if $\y_1,\y_2$ are two different points in  $Z_k$, then by (\ref{rho small}) and (\ref{indices})
\begin{equation}
\dist\Big(M_k^{-1}\big(B(\y_1,c)\big),M_k^{-1}\big(B(\y_2,c)\big)\Big) 
	\geq \frac{1-2c}{t_k} \geq \frac{1}{2t_k}
		> \frac{1}{2}\beta^{rj} \geq 2\rho\beta^{rj-1}.
\end{equation}
Therefore $B_{rj}$ intersects with at most one set of the form $M_k^{-1}\big(B(\y,c)\big)$, where
$\y\in Z_k$. Hence, for each $k$ satisfying (\ref{indices}),
\begin{equation}
B_{rj} \cap M_k^{-1}(Z_k^{(c)}) 
	\subseteq M_k^{-1}\big(B(\y,c)\big) \text{ for some }\y\in Z_k.
\end{equation}
We will now show that the preimage of such a ball is contained in a `small enough' neighborhood of some hyperplane.
Toward this end,
let $V\subseteq\Reals^M$ be the hyperplane perpendicular to $M_k\vbf_k$ and passing through
${\mathbf{0}}$. Then $$W \df M_k^{-1}(V)$$ is a hyperplane in $\Reals^N$ passing through ${\mathbf{0}}$.

}

\ignore{
To see this, first note that $W$ is a subspace, by linearity of $M_j$. Let $\vbf_j, \ubf_1,\dots, \ubf_{m-1}$
be a basis for $\rmb$ such that $\ubf_1 \notin W$ . Then $M_j\ubf_1 = \w + tM_j\vbf_j$,
where $\w \in W$ and $t \in \Reals$. But then $\vbf_j, \ubf_1-t\vbf_j,\ubf_2,\ubf_3,\dots,\ubf_{m-1}$
is also a basis for $\rmb$ and $\ubf_1-t\vbf_j \in W$. By induction, we can find a basis
$\vbf_j,\ubf_1^{'}, \dots, \ubf_{m-1}^{'}$ for $\rmb$ with $\ubf_i^{'} \in W$ for $i=1,\dots,m-1$.
Clearly, $\vbf_j \not\in W$, so $W \neq \rmb$. Hence, 
$W$ must be a hyperplane passing through $\mathbf{0}$.}

\ignore{
If $\x\not\in W^{(c/t_k)}$, then $\x = \w + \eta \vbf_k$ for some
$\eta > c/t_k$ and $\w\in W$, thus
$$\|M_k\x\| = \|M_k\w + M_k\eta\vbf_k\| \geq \eta\|M_k\vbf_k\| = t_k\eta > c\,.$$
Hence, $M_k^{-1}\big(B({\mathbf 0},c)\big) \subseteq W^{(c/t_k)}$, which clearly implies that for each 
$\y\in Z_k$,
$M_k^{-1}\big(B(\y,c)\big) \subseteq \cl^{(c/t_k)}$ for some hyperplane $\cl \subseteq \Reals^N$.
By (\ref{defc}) and (\ref{indices}),
$$\frac{c}{t_k} \leq \beta^{r(j+1)-1}\rho\df \zeta\,.$$
Therefore, by (\ref{Z preimage}),
\begin{equation}
\displaystyle\bigcup_{t_k \text{ satisfies (\ref{indices}) }} B_{rj}\cap M_k^{-1}\big(Z_k^{(c)}\big)	\subseteq \bigcup_{i=1}^n \cl_i^{(\zeta)},
\end{equation}
where $\cl_i$ are hyperplanes.
Alice will choose these $n$ hyperplane neighborhoods as her $rj$th turn,
and on her subsequent $r-1$ turns choose those which intersect Bob's last ball.
Noticing that by (\ref{large n}) $2^{-r}n <1$, $B_{r(j+1)}$ will have distance $\zeta$ 
from each of the hyperplanes $\cl_i$.
Thus for $k$ satisfying (\ref{indices}),
it holds that 
$$B_{r(j+1)} \cap M_k^{-1}\big(Z_k^{(c)}\big) = \varnothing\,.$$
We conclude that $\dist(M_k\x, Z_k) \geq c$ for any $\x \in B_{r(j+1)}$, 
which implies the desired statement.
}

\end{proof}

\section{Outline of the proof of Theorem \ref{maintheorem}}
\label{outline}
Let $H=M\cdot N$ and $L=M+N$. We shall be playing the game on $\RH$ 
where we identify points in $\RH$ with 
$M\times N$ real matrices.
For $k\in\mathbb{N}$ we denote the $k$th ball chosen by Bob by 
$B(k)$.  Let $\rho=\rho(B(0))$, and set
$
\sigma = \max\{\|X\|\ : X\in B(0) \}. 
$
We assign boldface lower case letters ($\x$, $\y$, etc.) 
to denote points in $\mathbb{R}^N$ and $\mathbb{R}^M$ while boldface upper case letters 
($\X$, $\Y$, $\B$, etc.) denote points in $\mathbb{R}^L$.
Finally, upper case letters ($A$,$X$,$Y$, etc.) denote points in $\mathbb{R}^H$.\\

For any 
\begin{center}
$A=\left(\begin{matrix}
\gamma_{11} & . & . & . & \gamma_{1N}\\
. & . & . & .& . \\
. & . & . & .& . \\
. & . & . & .& . \\
\gamma_{M1} & . & . & . & \gamma_{MN}
\end{matrix}\right)$
\end{center}
let 
\begin{align*}
\A_1&=(\gamma_{11},...,\gamma_{1N},1,0,...,0) & \B_1&=(\gamma_{11},...,\gamma_{M1},1,0,...,0)\\
\A_2&=(\gamma_{21},...,\gamma_{2N},0,1,...,0) & \B_2&=(\gamma_{12},...,\gamma_{M2},0,1,...,0)\\
&\ldots & &\ldots\\
\A_M&=(\gamma_{M1},...,\gamma_{MN},0,0,...,1) & \B_N&=(\gamma_{1N},...,\gamma_{MN},0,0,...,1).
\end{align*}

\vspace{2mm}

\noindent For $\X\in \R^L$, let $\x\in\R^N$ be the projection of $\X$ onto the first $N$ coordinates. Similarly, for $\Y\in\R^L$, let $\y\in\R^M$ be the projection of $\Y$ onto the first $M$ coordinates.

Set
\begin{equation}
{\mathcal{A}}(\X)=(\A_{1}\cdot\X,\ldots ,\A_{M}\cdot\X)
\label{special A}
\end{equation}
and
\begin{equation}
{\mathcal{B}}(\Y)=(\B_{1}\cdot\Y,\ldots ,\B_{N}\cdot\Y).
\label{special B}
\end{equation}

\noindent We notice that a matrix $A$ lies in 
$\Bad_\0(M,N)$ if and only if there exists a constant $c$ such that 
for all $\X\in\Z^L$ with $\x\neq 0$,

\begin{equation}
\|\x\|^{N}\cdot\|{\mathcal{A}}(\X)\|^{M} > c.
\label{BA def}
\end{equation}

\vspace{2mm}

\noindent Let $\lambda = N/L$. Given $1<R\in\mathbb{R}$, let $\delta =R^{-NL^2}$  and  $\delta^T=R^{-ML^2}$, and consider the inequalities
\begin{align} \label{list1}
0<\xnm&<\delta R^{M(\lambda +i)}\\ \label{list2}
\Anm&<\delta R^{-N(\lambda +i)-M}\\ \label{list3}
0<\ynm &<\delta^{T} R^{N(1 +j)}\\ \label{list4}
\Bnm &<\delta^{T} R^{-M(1 +j)-N}.
\end{align}

\begin{observation}
\label{observationwindows}
Fix $A\in\Reals^H$, and suppose that for each $i\in\Nat$, the system of equations \textup{(\ref{list1})}, \textup{(\ref{list2})} has no integer solution $\X$. Then $A\in \Bad_\0(M,N)$.
\end{observation}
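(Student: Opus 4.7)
The plan is to verify the defining inequality (\ref{BA def}) directly by producing an explicit constant $c = c(R,M,N) > 0$. Fix any $\X = (\x,\p) \in \Int^L$ with $\x \neq \0$ (so that the last $M$ coordinates of $\X$ play the role of $\p \in \Int^M$ in the definition of $\mathcal{A}(\X)$), and associate to $\X$ the smallest natural number $i$ for which the upper bound on $\|\x\|$ in (\ref{list1}) holds, i.e.\ the smallest $i \in \Nat$ with $\|\x\| < \delta R^{M(\lambda + i)}$. Such an $i$ exists since the right-hand side tends to infinity with $i$, while $\|\x\|$ is finite.

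For this $i$, the integer point $\X$ satisfies (\ref{list1}), so by the hypothesis of the observation it cannot simultaneously satisfy (\ref{list2}). This gives the lower bound
\[
\|\mathcal{A}(\X)\| \;\geq\; \delta\, R^{-N(\lambda + i) - M}.
\]
To convert this into a bound on the product $\|\x\|^N \|\mathcal{A}(\X)\|^M$ I still need a matching lower bound on $\|\x\|$. When $i$ is not the minimum of $\Nat$, the minimality of $i$ immediately yields $\|\x\| \geq \delta R^{M(\lambda + i - 1)}$; in the boundary case in which minimality gives nothing, I would instead use that $\x \in \Int^N \setminus \{\0\}$ implies the trivial bound $\|\x\| \geq 1$.

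The key -- and essentially the only substantive -- step is then to observe that in the non-boundary case the exponents of $R$ cancel: $MN(\lambda + i - 1) - MN(\lambda + i) = -MN$, and so
\[
\|\x\|^N \, \|\mathcal{A}(\X)\|^M \;\geq\; \delta^{N+M}\, R^{-MN - M^2} \;=\; \delta^{L}\, R^{-ML}.
\]
In the boundary case one combines $\|\x\| \geq 1$ with the failure of (\ref{list2}) for the smallest $i \in \Nat$, obtaining an explicit (possibly smaller) constant depending only on $R,M,N$. Letting $c$ be the smaller of the two constants produces the uniform lower bound (\ref{BA def}) for every admissible $\X$, so $A \in \Bad_\0(M,N)$.

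I do not anticipate any real obstacle. The whole argument is essentially a calibration check: the window parameters $\delta = R^{-NL^2}$ and the exponents $M(\lambda+i)$, $-N(\lambda+i)-M$ appearing in (\ref{list1})--(\ref{list2}) have been chosen precisely so that, after placing $\|\x\|$ in its correct window, the $i$-dependent cross-terms of $\|\x\|^N \|\mathcal{A}(\X)\|^M$ cancel and leave a bound independent of $i$. The only piece of real care is the boundary case, which is dispensed with by the integrality of $\x$.
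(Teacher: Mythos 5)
Your proof is correct and follows essentially the same route as the paper: place $\|\x\|$ in a window indexed by $i$, use the hypothesis to lower-bound $\|\mathcal{A}(\X)\|$, and multiply after raising to powers $N$ and $M$, giving $c = \delta^L R^{-ML}$. The boundary case you isolate is actually vacuous, since $\|\x\|\geq 1\geq\delta R^{M(\lambda-1)}$ (the exponent is negative), so the window lower bound $\|\x\|\geq\delta R^{M(\lambda+i-1)}$ holds for all $i\in\Nat$ — the paper notes this in one line and thereby avoids the case split.
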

\begin{proof}
For each $\X\in\mathbb{Z}^L$ with $\x\neq 0$, we have $\|\x\| \geq 1 \geq \delta R^{M(\lambda - 1)}$, so there exists a unique $i\in\Nat$ such that
\[
\delta R^{M(\lambda +i - 1)}\leq \xnm<\delta R^{M(\lambda +i)}.
\]
Since $\X$ is a solution to (\ref{list1}), $\X$ cannot be a solution to (\ref{list2}), i.e.
\[
\Anm \geq \delta R^{-N(\lambda +i)-M}.
\]
Multiplying the two lower bounds after raising them to appropriate powers gives (\ref{BA def}) with $c = \delta^L R^{-ML}$.
\end{proof}

\begin{remark}
The absence of integer solutions to the system of equations (\ref{list3}), (\ref{list4}) is not needed to show that a matrix $A$ is badly approximable. However, in order to show that Alice can play in a way such that (\ref{list1}), (\ref{list2}) have no solutions, it is necessary for her to first play so that (\ref{list3}), (\ref{list4}) have no solutions when $j = i - 1$. Dually, in order to show that Alice can play in a way such that (\ref{list3}), (\ref{list4}) have no solutions, it is necessary for her to first play so that (\ref{list1}), (\ref{list2}) have no solutions when $i = j$. 
\end{remark}

\noindent We recall the following propositions due to Schmidt (Lemmas 1 and 2 in \cite{S2}):\\

\begin{pro}
There exists a constant $R_1=R_1(M,N,\sigma )$ such that for every $i\in \mathbb{N}$ and for every $R\geq R_1$,
if a ball $B$ satisfies
\begin{equation}
\label{rhoB1}
\rho (B)<R^{-L(\lambda+i)}
\end{equation} 
and if for all $A\in B$ the system of equations \textup{(\ref{list1})}, \textup{(\ref{list2})}
has no integer solution $\X$,
then the set of all vectors $\Y\in\mathbb{Z}^L$ satisfying \textup{(\ref{list3})} with $j = i$
such that there exists $A\in B$ satisfying \textup{(\ref{list4})} with $j = i$
spans a subspace of $\mathbb{R}^L$ whose dimension is at most $N$.
\label{Schmidt first lemma}
\end{pro}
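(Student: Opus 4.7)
The plan is to argue by contradiction, following a transference-style argument in the spirit of Schmidt's Lemma~1 of \cite{S2}. Suppose that there exist $N+1$ linearly independent integer vectors $\Y^{(1)},\ldots,\Y^{(N+1)}\in\Int^L$, each satisfying (\ref{list3}) with $j=i$, together with matrices $A_s\in B$ so that (\ref{list4}) with $j=i$ holds at $\Y=\Y^{(s)}$ for each $s$. The goal is to produce a nonzero integer vector $\X\in\Int^L$ with $\x\ne\0$ satisfying (\ref{list1})--(\ref{list2}) at some $A\in B$, directly contradicting the hypothesis.

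First I would reduce to a single $A\in B$. Fixing any $A\in B$ and writing
\[
A^T\y^{(s)}+\y'^{(s)} = A_s^T\y^{(s)}+\y'^{(s)} + (A-A_s)^T\y^{(s)},
\]
the diameter bound $\|A-A_s\|\le 2\rho(B)<2R^{-L(\lambda+i)}$ combined with $\|\y^{(s)}\|<\delta^T R^{N(1+i)}$ from (\ref{list3}) and the identity $L\lambda=N$ yields $\|A^T\y^{(s)}+\y'^{(s)}\|\le C\,\delta^T R^{-Mi}$ for some constant $C=C(M,N,\sigma)$. This is an $R^L$-factor weaker than (\ref{list4}), but the choice $\delta^T=R^{-ML^2}$ provides enormous slack to absorb the loss.

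The heart of the argument is a geometry-of-numbers / transference step. Introduce the centrally symmetric convex bodies
\[
K\df\{\X\in\RL:\|\x\|\le\delta R^{M(\lambda+i)},\ \|\mathcal{A}(\X)\|\le\delta R^{-N(\lambda+i)-M}\},
\]
\[
K^*\df\{\Y\in\RL:\|\y\|\le\delta^T R^{N(1+i)},\ \|\mathcal{B}(\Y)\|\le C\delta^T R^{-Mi}\}
\]
(with $\mathcal{A},\mathcal{B}$ built from the fixed $A$). By the first step, the $\Y^{(s)}$ all lie in $K^*$, so the successive minima with respect to $\Int^L$ satisfy $\mu_{N+1}(K^*)\le 1$. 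Unimodular changes of variables ($\x'\mapsto\mathcal{A}(\X)$ and $\y'\mapsto\mathcal{B}(\Y)$) give $\mathrm{vol}(K)\asymp\delta^L R^{-M^2}$ and $\mathrm{vol}(K^*)\asymp(\delta^T)^L R^{MN}$. When $M=1$, the equality $N+1=L$ forces $\mu_1(K^*)\cdots\mu_L(K^*)\le 1$, directly contradicting Minkowski's second theorem $\mu_1\cdots\mu_L\cdot\mathrm{vol}(K^*)\ge 2^L/L!$ once $R$ is large. When $M\ge 2$, Minkowski's second theorem applied to $K^*$ forces $\mu_L(K^*)^{M-1}\gg R^{M(L^3-N)}$, and a Mahler-type transference inequality---valid because $K$ and $K^*$ are linked through the unimodular integer-valued bilinear pairing $\langle\X,\Y\rangle\df\x\cdot\y'-\x'\cdot\y=\x\cdot\mathcal{B}(\Y)-\mathcal{A}(\X)\cdot\y$---then yields $\mu_1(K)\ll 1$. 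Either way one obtains a nonzero $\X\in\Int^L\cap K$; such an $\X$ necessarily has $\x\ne\0$ (otherwise $\x'\in\Int^M$ with $\|\x'\|<\delta R^{-N(\lambda+i)-M}<1$ forces $\X=\0$), so $\X$ solves (\ref{list1})--(\ref{list2}) at $A$, the desired contradiction.

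The main obstacle will be the Mahler-type transference in the case $M\ge 2$: the bodies $K$ and $K^*$ are not standard polar duals but are related through the bilinear pairing $\langle\cdot,\cdot\rangle$ above, so the inequality must be derived either via an appropriate change of coordinates reducing to standard polarity or directly from the integer pairing, with careful tracking of every multiplicative constant. The engineered choices $\delta=R^{-NL^2}$ and $\delta^T=R^{-ML^2}$ are tuned to provide exactly the exponent slack needed so that the constructed $\X$ lands strictly inside $K$ (not merely in a dilate), and the threshold $R_1=R_1(M,N,\sigma)$ must be chosen large enough to dominate the universal constants from Minkowski/Mahler and the operator-to-Euclidean norm comparisons governed by $\sigma=\max_{X\in B(0)}\|X\|$.
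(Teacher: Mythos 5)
The paper does not prove this Proposition; it is stated and attributed to Schmidt (Lemma~1 of \cite{S2}), so there is no in-paper argument to compare against. Your blind reconstruction has the right overall shape: the reduction to a single $A\in B$ via the radius bound, the boxes in the $(\x,\mathcal{A}(\X))$ and $(\y,\mathcal{B}(\Y))$ coordinates, Minkowski's second theorem, and transference through the unimodular pairing $\langle\X,\Y\rangle=\x\cdot\y'-\x'\cdot\y$. The reduction step and the $M=1$ case are correct. But the $M\geq 2$ case as written has a real gap.

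The problem is the direction of the ``Mahler-type'' inequality. The Mahler polar of $K$ with respect to $\langle\cdot,\cdot\rangle$ is (up to bounded factors) the box $K^\circ$ with $\y$-side $\asymp\delta^{-1}R^{N(\lambda+i)+M}$ and $\mathcal{B}(\Y)$-side $\asymp\delta^{-1}R^{-M(\lambda+i)}$, which is \emph{much larger} than your $K^*$: one has $K^*\subseteq cK^\circ$ with $c\asymp R^{-L^3+MN/L}$, i.e.\ $K^*$ sits inside $K^\circ$ shrunk by roughly $R^{L^3}$. Consequently $\mu_L(K^*)\geq\mu_L(K^\circ)$, so your Minkowski bound $\mu_L(K^*)\gg R^{M(L^3-N)/(M-1)}$ gives no lower bound whatsoever on $\mu_L(K^\circ)$. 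Mahler's theorem gives $\mu_1(K)\mu_L(K^\circ)\leq L!$; it does \emph{not} give $\mu_1(K)\mu_L(K^*)\leq L!$ (the inequality that does transfer to $K^*$, namely $\mu_1(K)\mu_L(K^*)\geq 1$, points the wrong way). So ``$\mu_L(K^*)$ huge $\Rightarrow\mu_1(K)\ll 1$'' does not follow, and the slack in $\delta,\delta^T$ cannot save this step because the mismatch between $K^*$ and $K^\circ$ is of the same order $R^{L^3}$ as the slack. Relatedly, your $M\geq 2$ argument never uses the hypothesis that (\ref{list1})--(\ref{list2}) has no integer solution, and that hypothesis is genuinely necessary for $M\geq 2$: the $(N+1)\times(N+1)$ minors selecting more than one $\y$-column can exceed $1$ when $i$ is large, since the $\y$-side $a^*=\delta^T R^{N(1+i)}$ grows with $i$.

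A chain that does close runs the other way around. The hypothesis gives $\mu_1(K)\geq 1$, hence $\mu_1\cdots\mu_{M-1}\geq 1$, and Minkowski's second theorem on $K$ (where $\operatorname{vol}(K)\asymp\delta^L R^{-M^2}\asymp R^{-NL^3-M^2}$) gives $\mu_M(K)^{N+1}\leq\prod_j\mu_j(K)\lesssim R^{NL^3+M^2}$. Mahler's easy inequality $\mu_M(K)\,\mu_{N+1}(K^\circ)\geq 1$ then yields $\mu_{N+1}(K^\circ)\gtrsim R^{-(NL^3+M^2)/(N+1)}$, and the containment $K^*\subseteq cK^\circ$ with $c\asymp R^{-L^3+MN/L}$ upgrades this to $\mu_{N+1}(K^*)\gtrsim R^{\,L^3-MN/L-(NL^3+M^2)/(N+1)}$, whose exponent is readily checked to be positive. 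Thus $K^*$ cannot contain $N+1$ independent lattice points once $R\geq R_1(M,N,\sigma)$, which is the claimed conclusion.
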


\begin{pro}
\label{last lemma}
There exists a constant $R_2=R_{2}(M,N,\sigma )$ such that for every $j\in \mathbb{N}$ and for every $R\geq R_2$,
if a ball $B$ satisfies 
\begin{equation}
\label{rhoB2}
\rho (B)<R^{-L(1+j)}
\end{equation}

\noindent and if for all $A\in B$ the system of equations \textup{(\ref{list3})}, \textup{(\ref{list4})}
has no integer solution $\Y$,
then the set of all vectors $\X\in\mathbb{Z}^L$ satisfying \textup{(\ref{list1})} with $i = j + 1$ such that there exists $A\in B$ satisfying \textup{(\ref{list2})} with $i = j + 1$
spans a subspace of $\mathbb{R}^L$ of dimension at most $M$.
\label{Schmidt second lemma}
\end{pro}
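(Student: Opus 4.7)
The plan is to prove the contrapositive. Assume that there exist $M+1$ linearly independent integer vectors $\X^{(1)},\ldots,\X^{(M+1)} \in \Z^L$, each satisfying (\ref{list1}) with $i = j+1$, and each satisfying (\ref{list2}) with $i = j+1$ for some $A^{(k)} \in B$. I will construct a nonzero integer solution $\Y \in \Z^L$ to (\ref{list3}), (\ref{list4}) for a single $A \in B$, contradicting the hypothesis. First I fix any reference matrix $A_0 \in B$. Because $\mathcal{A}(\X) = A\x + \tilde{\x}$ is affine-linear in $A$ while $\tilde{\x}$ is fixed, the bound $\rho(B) < R^{-L(1+j)}$ from (\ref{rhoB2}) together with (\ref{list1}) yields
\[
\|\mathcal{A}_0(\X^{(k)}) - \mathcal{A}^{(k)}(\X^{(k)})\| \le 2\rho(B)\,\|\x^{(k)}\| < 2\delta R^{-N(\lambda+j)},
\]
so each $\X^{(k)}$ lies in the single symmetric convex body $K_1 = \{\X : \|\x\| \le U,\ \|\mathcal{A}_0(\X)\| \le V\}$ with $U = \delta R^{M(\lambda+j+1)}$ and $V = 3\delta R^{-N(\lambda+j)}$. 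It thus suffices to produce a nonzero integer point in $K_2 = \{\Y : \|\y\| \le Y_0,\ \|\mathcal{B}_0(\Y)\| \le Y_1\}$ with $Y_0 = \delta^T R^{N(1+j)}$ and $Y_1 = \delta^T R^{-M(1+j)-N}$.

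The core algebraic tool is the identity
\[
[\X,\Y] := \tilde{\x}\cdot\y - \x\cdot\tilde{\y} = \mathcal{A}_0(\X)\cdot\y - \mathcal{B}_0(\Y)\cdot\x,
\]
which defines an integer-valued nondegenerate bilinear form on $\Z^L \times \Z^L$ coming from a lattice automorphism $J$ of $\Z^L$. Since $\delta\delta^T = R^{-L^3}$, an elementary computation gives $|[\X,\Y]| = O(R^{-L^3 + MN/L}) \ll 1$ for $\X \in K_1$ and $\Y \in K_2$. This sandwiches $K_2$ between two dilates of the polar body $K_1^{\circ} = \{\Y : |[\X,\Y]| \le 1\ \forall \X \in K_1\}$: explicitly $c K_1^\circ \subseteq K_2 \subseteq \epsilon K_1^\circ$ with $c \asymp \delta\delta^T R^{MN/L - N}$ and $\epsilon \asymp \delta\delta^T R^{MN/L}$.

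The argument closes by combining three ingredients. First, $\X^{(1)},\ldots,\X^{(M+1)}$ being linearly independent integer points in $K_1$ means $\mu_{M+1}(K_1) \le 1$, where $\mu_i$ denotes the $i$th successive minimum. Second, $\mathrm{vol}(K_1) \asymp R^{-N(L^3-M)}$ is minuscule, so Minkowski's second theorem (combined with $\mu_1\cdots\mu_{M+1} \le 1$) forces $\mu_L(K_1) \gtrsim R^{N(L^3-M)/(N-1)}$ when $N \ge 2$. Third, Mahler's transference $\mu_L(K_1)\cdot\nu_1(K_1^\circ) \le L!$ (legitimate because $J$ is a lattice automorphism of $\Z^L$), together with the inclusion $K_2 \supseteq cK_1^\circ$, produces $\nu_1(K_2) \le c^{-1}L!/\mu_L(K_1)$, which is $\le 1$ once $R \ge R_2(M,N,\sigma)$; hence $K_2$ contains a nonzero integer vector, the desired contradiction. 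The degenerate case $N = 1$ (so $M+1 = L$) is even easier: $\mu_L(K_1) \le 1$ combined with $\mathrm{vol}(K_1) \ll 1$ directly contradicts Minkowski's second theorem, bypassing transference entirely. The main obstacle is the exponent bookkeeping in the transference step: one must verify that plugging the estimates for $c$ and $\mu_L(K_1)$ into $c^{-1}L!/\mu_L(K_1) \le 1$ reduces to a universal polynomial inequality in $L,M,N$ (of the form $L^4 \gtrsim MNL + N^2(N-1)$), which is automatic for $L = M + N \ge 2$.
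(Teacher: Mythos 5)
The paper never proves this proposition itself --- it is stated verbatim as a citation of Lemmas 1 and 2 of Schmidt's 1969 paper \cite{S2} --- so the comparison here is against Schmidt's original argument. Your reconstruction via Mahler's transference and Minkowski's second theorem is, to my recollection, essentially Schmidt's route, and the argument as you lay it out is correct. The bilinear form identity $[\X,\Y]=\mathcal{A}_0(\X)\cdot\y-\mathcal{B}_0(\Y)\cdot\x$ checks out (the $\x^T A_0^T \y$ terms cancel), the unimodularity of the matrix $J$ representing $[\cdot,\cdot]$ is what lets you transfer successive minima between $\Z^L$ and its $J$-dual copy, and the volume bookkeeping $\mathrm{vol}(K_1)\asymp U^N V^M\asymp R^{-N(L^3-M)}$, $c=UY_1\asymp\delta\delta^T R^{MN/L-N}$ is accurate (note the exponent of $R$ in $2\rho(B)\|\x^{(k)}\|$ is \emph{exactly} $-N(\lambda+j)$, not merely $\le$, because $L\lambda=N$). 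The exponent in $c\cdot\mu_L(K_1)$ works out to $\frac{L^3-NM}{N-1}+\frac{MN}{L}-N\ge L^2-L>0$ once $N\ge 2$, so the conclusion $\nu_1(K_2)<1$ indeed holds for $R$ large, and the separate $N=1$ case (where $\mu_L\le 1$ and $\mathrm{vol}(K_1)\to 0$ already contradict Minkowski's second theorem, with no hypothesis on $\Y$ needed) is handled correctly.

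Two small points you should make explicit. First, finding a nonzero integer $\Y\in K_2$ is not yet a solution of (\ref{list3}): you need $\y\neq\0$. This follows because $Y_1=\delta^T R^{-M(1+j)-N}<1$, so a nonzero integer $\Y$ with $\y=\0$ would force $\tilde{\y}=\mathcal{B}_0(\Y)$ to be a nonzero integer vector of norm $<1$, impossible. Second, (\ref{list3})--(\ref{list4}) are strict inequalities while $K_2$ is closed; this is harmless because the exponent $E$ is strictly positive, so $\nu_1(K_2)<1$ strictly and the integer point can be taken in the interior of $K_2$. Neither is a real gap, but both belong in a careful write-up.
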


Let us now say a few words about the proof of Theorem \ref{maintheorem}. Suppose that a game has already been played; for each $i\in\Nat$, let $k_i\in\Nat$ be the minimal number such that (\ref{rhoB1}) holds with $B = B(k_i)$, and for each $j\in\Nat$, let $h_j\in\Nat$ be the minimal number such that (\ref{rhoB2}) holds with $B = B(h_j)$. Alice will try to play the game in such a way so that for all $A\in B(k_i)$, the system of equations (\ref{list1}), (\ref{list2}) has no integer solution $\X$ and so that for all $A\in B(h_j)$, the system of equations (\ref{list3}), (\ref{list4}) has no integer solution $\Y$. The following lemma states that she can always continue to do this if $R$ is sufficiently large:

\begin{lemma}
\label{lemmareduced}
If $R\in\Reals$ is sufficiently large, then the following hold:
\begin{enumerate}[i)]
\item When the game is at stage $k_i$, if, for all $A\in B(k_i)$, the system of equations \textup{(\ref{list1})}, \textup{(\ref{list2})} has no integer solution $\X$, and the system of equations \textup{(\ref{list3})}, \textup{(\ref{list4})} with $j = i - 1$ has no integer solution $\Y$, then Alice has a strategy ensuring that when the game reaches stage $h_i$, then for all $A\in B(h_i)$ the system of equations \textup{(\ref{list3})}, \textup{(\ref{list4})} with $j = i$ will have no integer solution $\Y$.
\item Dually, when the game is at stage $h_j$, if, for all $A\in B(h_j)$, the system of equations \textup{(\ref{list3})}, \textup{(\ref{list4})} has no integer solution $\Y$, and the system of equations \textup{(\ref{list1})}, \textup{(\ref{list2})} with $i = j$ has no integer solution $\X$, then Alice has a strategy ensuring that when the game reaches stage $k_{j+1}$, then for all $A\in B(k_{j+1})$ the system of equations \textup{(\ref{list1})}, \textup{(\ref{list2})} with $i = j + 1$ will have no integer solution $\X$.
\end{enumerate}
\end{lemma}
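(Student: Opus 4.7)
I would prove part (i); part (ii) is dual, using Proposition \ref{Schmidt second lemma} in place of Proposition \ref{Schmidt first lemma} and swapping the roles of $(\X, \mathcal{A})$ and $(\Y, \mathcal{B})$. At stage $k_i$ both hypotheses of Proposition \ref{Schmidt first lemma} hold: $\rho(B(k_i)) < R^{-L(\lambda+i)}$ by the definition of $k_i$, and \textup{(\ref{list1})}--\textup{(\ref{list2})} has no integer solution over $B(k_i)$ by assumption. The proposition thus yields a subspace $V_i \subseteq \Reals^L$ with $\dim V_i \leq N$ containing every \emph{dangerous} $\Y \in \Int^L$ for the $j=i$ system --- i.e., every $\Y$ satisfying \textup{(\ref{list3})} for $j=i$ for which some $A \in B(k_i)$ realizes \textup{(\ref{list4})} for $j=i$. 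The inductive hypothesis on $j=i-1$ further restricts dangerous $\Y$'s to the annulus $\delta^T R^{Ni} \leq \|\y\| < \delta^T R^{N(1+i)}$: any $\Y$ with smaller $\|\y\|$ would also satisfy \textup{(\ref{list3})} for $j=i-1$ and, by assumption, would force $\Bnm \geq \delta^T R^{-Mi-N}$ for every $A \in B(k_i)$, which strictly exceeds the $j=i$ tolerance $\delta^T R^{-M(1+i)-N}$.

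For each dangerous $\Y = (\y,\z)$, the identity $\mathcal{B}(\Y) = A^T\y + \z$ shows that the bad region $\{A : \Bnm < \delta^T R^{-M(1+i)-N}\}$ lies inside a hyperplane slab $\cl_\Y^{(\varepsilon_\Y)} \subset \Reals^H$ of thickness $\varepsilon_\Y = \delta^T R^{-M(1+i)-N}/\|\y\| \leq R^{-L(1+i)}$, of the same order as $\rho(B(h_i))$. Alice's strategy, modeled on the proof of Theorem \ref{dani general}, is to play these slabs via the halving scheme in the hyperplane percentage game (legitimate by Lemma \ref{PHAWequivalence}) with $p = 1/2$: after $r = \lceil \log_2 N' \rceil + 1$ Bob moves, strictly fewer than one slab still meets Bob's ball, so every bad region is blocked. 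Between stages $k_i$ and $h_i$ Bob's ball contracts by at least $R^{-M}$, giving him at least $\lfloor M \log R / \log(1/\beta) \rfloor$ moves --- room enough for $r$ once $R$ is chosen large in terms of $\beta, M, N$. Each slab is legal at its stage of play, since $\varepsilon_\Y \leq R^{-L(1+i)}$ and Alice plays at stages with $\rho(B(k)) \geq R^{-L(1+i)}/\beta$, which lie strictly before $h_i$.

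The principal obstacle is controlling $N'$. A naive count of lattice points of $V_i \cap \Int^L$ in the $\y$-annulus grows with $i$, which would inflate $r$ beyond the move budget. The necessary refinement is a Minkowski / successive-minima argument: in the lattice $V_i \cap \Int^L$ (of rank at most $N$), only the first $\leq N$ minima --- with respect to a norm mixing $\|\y\|$ and $\sup_{A \in B(k_i)} \Bnm$ --- can independently contribute dangerous $\Y$'s, and by linearity of $\mathcal{B}$ in $\Y$ together with reduced-basis coefficient bounds, blocking the corresponding $\leq N$ basis slabs blocks every integer combination lying in the annulus. This collapses $N'$ to a quantity depending only on $M$ and $N$, so $r$ becomes uniform in $R$ and $i$ and is comfortably absorbed by the $\Omega(\log R)$ move budget. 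Dovetailing this count with the alternating induction between parts (i) and (ii), and verifying that the resulting lower bound on $R$ depends only on $\beta, M, N, \sigma$, is routine but notationally involved.
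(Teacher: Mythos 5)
Your Part One reasoning—Proposition \ref{Schmidt first lemma} giving the $\leq N$-dimensional subspace $V_i$, and the annulus restriction $\delta^T R^{Ni} \leq \|\y\| < \delta^T R^{N(1+i)}$ forced by the $j = i-1$ hypothesis—matches the paper exactly. But from there you diverge onto a route that has a genuine gap, and the paper's actual argument is built precisely to avoid it.

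The gap is in the Minkowski reduction step. You propose to block only the slabs corresponding to $\leq N$ reduced-basis vectors of the lattice $V_i \cap \Int^L$, claiming that ``by linearity of $\mathcal{B}$ in $\Y$ together with reduced-basis coefficient bounds'' this blocks every integer combination in the annulus. This does not follow. If $\Y = \sum_k t_k \Y_k$, then $\mathcal{B}(\Y) = \sum_k t_k \mathcal{B}(\Y_k)$, and avoiding each slab $\{A : |\B_1 \cdot \Y_k| < c\}$ only tells you each summand $t_k(\B_1 \cdot \Y_k)$ has magnitude bounded below by $|t_k| c$—it says nothing about the sum, which can cancel. The bad region for $\Y$ is a slab whose direction is governed by $\y = \sum_k t_k \y_k$, and this direction interpolates between the basis slab directions, so ``avoiding'' is not a linear condition that passes to combinations. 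Moreover, the number of lattice points of $V_i \cap \Int^L$ whose $\y$-projection lies in an annulus of width factor $R^N$ is not bounded independent of $R$ and $i$, so without a working reduction the slab count $N'$ genuinely inflates $r$ past the move budget; calling the resolution ``routine but notationally involved'' is underselling the central difficulty of the whole theorem.

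The paper sidesteps exactly this problem. Rather than enumerating and blocking dangerous $\Y$'s individually, Part One uses Cramer's rule and the lower bound $\max_v |t_v| \gg \delta^T R^{Ni}$ to show that the \emph{existence} of any dangerous $\Y$ forces the single scalar inequality (\ref{last equation}) on the determinant $D = \det(\B_u \cdot \Y_v)$ and its cofactors. This converts ``some lattice point is dangerous'' into one polynomial condition on $A$, with no need to count lattice points. Part Two then invokes the technical Lemma \ref{hard theorem}, whose inductive proof over the minors $M_v(A)$ supplies Alice with a hyperplane-game strategy forcing $|D|$ to be large relative to $M_{N-1}(B(h_i))$, so that (\ref{last equation}) fails for every $A \in B(h_i)$. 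That determinant machinery is the key idea your proposal is missing; you would need to rediscover it, or something morally equivalent, to make the direct slab-blocking approach go through.
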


\noindent We will prove the first claim of this lemma; the proof of the second claim is the same but dual.

\noindent The proof of Lemma \ref{lemmareduced} will be divided into two parts: in the first part, we will discover the consequences of the hypotheses of Lemma \ref{lemmareduced}; in the second part, we will describe how Alice will play.

\begin{proof}[Proof of Lemma \ref{lemmareduced}, Part One]
For convenience of notation let $B = B(k_i)$. Suppose that $R\geq R_1$. Then by Proposition \ref{Schmidt first lemma}, the dimension of the subspace spanned by the set
\begin{equation}
\label{setS}
S \df\{\Y\in\mathbb{Z}^L:\text{there exists $A\in B$ satisfying (\ref{list3}), (\ref{list4}) with $j = i$}\}
\end{equation}
is at most $N$. If necessary, extend this subspace to a subspace of dimension $N$. Then choose an orthonormal basis $\mathcal{Y} = \{\Y_1,\ldots,\Y_N\}$. Now suppose that $\Y\in S$, and let $A\in B$ be the corresponding matrix. Note that by hypothesis, $(\Y,A)$ do not satisfy the system of equations (\ref{list3}), (\ref{list4}) with $j = i - 1$. But $(\Y,A)$ do satisfy (\ref{list4}) with $j = i$, which implies (\ref{list4}) with $j = i - 1$. Thus they must not satisfy (\ref{list3}) with $j = i - 1$, i.e.
\[
\|\y\| \geq \delta^{T}R^{N(1+(i-1))}.
\]
Since $\Y\in S$, we can write $\Y=t_1\Y_1+...+t_N\Y_N$ for some real numbers $t_1,\ldots,t_N$. We have that 

\begin{center}
$\delta ^{T}R^{N i}\leq \|\y\|\leq \|\Y\| = \sqrt{t_{1}^{2}+\ldots +t_{N}^{2}}\leq
\sqrt{N}\max(\left|t_1 \right|,...,\left|t_N\right|)$.
\end{center}

\noindent And so,

\begin{equation}
\delta^T\frac{1}{\sqrt{N}}R^{N i}\leq \max(\left|t_1 \right|,...,\left|t_N\right|).
\label{35}
\end{equation}

\noindent On the other hand, we may write out (\ref{list4}) with $j = i$ as

\begin{center}
$\begin{matrix}
\left|t_1(\B_1\cdot\Y_1)+...+t_N(\B_1\cdot\Y_N)\right|<\delta ^{T}R^{-M(1+i)-N}\\
.\\
.\\
.\\
\left|t_1(\B_N\cdot\Y_1)+...+t_N(\B_N\cdot\Y_N)\right|<\delta ^{T}R^{-M(1+i)-N}.
\end{matrix}$
\end{center}

\noindent Let $D = D(A,\mathcal{Y})$ be the determinant of the matrix
\begin{equation}
\label{MAY}
M(A,\mathcal{Y}) \df (\B_u\cdot\Y_v)_{1\leq u,v\leq N},
\end{equation}

\noindent and let $D_{uv} = D_{uv}(A,\mathcal{Y})$ be the cofactor of the entry $\B_u\cdot\Y_v$ in this matrix.
By Cramer's rule we get for every $1\leq v\leq N$

\[
|t_v|\leq \frac{1}{|D|}N\delta ^{T}R^{-M(1+i)-N}\max(|D_{1v}|,...,|D_{Nv}|)
\]

\noindent and in conjunction with (\ref{35}) we get
\[
\delta^T\frac{1}{\sqrt{N}}R^{N i}
\leq \frac{1}{|D|}N\delta ^{T}R^{-M(1+i)-N}\max(|D_{11}|,|D_{12}|,...,|D_{NN}|)
\]

\noindent or

\begin{equation}
\left|D\right|\leq N\sqrt{N}R^{-L(1+i)}\max(\left|D_{11}\right|,
\left|D_{12}\right|,...,\left|D_{NN}\right|).
\label{last equation}
\end{equation}

To summarize, we have proven:
\begin{lemma}
\label{lemmareducedpartone}
Suppose that for all $A\in B(k_i)$, the system of equations \textup{(\ref{list1})}, \textup{(\ref{list2})} has no integer solution $\X$, and the system of equations \textup{(\ref{list3})}, \textup{(\ref{list4})} with $j = i - 1$ has no integer solution $\Y$. Let $\mathcal{Y} = \{\Y_1,\ldots,\Y_N\}$ be the orthonormal basis described above. Then for every $A\in B(k_i)$, if there exists an integer point $\Y\in\mathbb{Z}^L$ satisfying \textup{(\ref{list3})} and \textup{(\ref{list4})} with $j = i$, then $(A,\mathcal{Y})$ satisfies \textup{(\ref{last equation})}.
\end{lemma}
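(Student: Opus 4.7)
The plan is to consolidate the chain of inequalities derived in the preceding discussion into a single clean statement. First, I would invoke Proposition \ref{Schmidt first lemma}—which applies because the hypothesis on (\ref{list1}), (\ref{list2}) is in force and we may assume $R \geq R_1$—to guarantee that the set $S$ in (\ref{setS}) spans a subspace of $\mathbb{R}^L$ of dimension at most $N$. Extending this subspace (if necessary) to a full $N$-dimensional subspace and fixing an orthonormal basis $\mathcal{Y} = \{\Y_1,\ldots,\Y_N\}$ of it produces an object that depends on $B(k_i)$ but not on the particular candidate $\Y$.

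Next, fix $A\in B(k_i)$ and a putative integer vector $\Y \in \Z^L$ satisfying (\ref{list3}) and (\ref{list4}) with $j = i$. The key observation is that (\ref{list4}) with $j = i$ is strictly stronger than (\ref{list4}) with $j = i-1$ (the right-hand side has $R^{-M}$ extra decay). Thus by the second hypothesis—no integer solution to the $j = i - 1$ system exists—the vector $\Y$ must violate (\ref{list3}) with $j = i - 1$, yielding
\[
\|\y\| \geq \delta^T R^{Ni}.
\]
Since $\Y \in S$ lies in the span of $\mathcal{Y}$, I would write $\Y = t_1 \Y_1 + \ldots + t_N \Y_N$; orthonormality of $\mathcal{Y}$ gives $\|\y\| \leq \|\Y\| = \sqrt{t_1^2 + \cdots + t_N^2} \leq \sqrt{N}\max_v |t_v|$, whence
\[
\max_v |t_v| \geq \frac{\delta^T}{\sqrt{N}} R^{Ni}.
\]

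For the matching upper bound, I would interpret (\ref{list4}) with $j = i$ componentwise: since $\B_u \cdot \Y = \sum_{v} t_v (\B_u \cdot \Y_v)$, the system reads $M(A,\mathcal{Y})\,\mathbf{t} = \mathcal{B}(\Y)$ with $M(A,\mathcal{Y})$ as in (\ref{MAY}), and every coordinate of the right-hand side has absolute value less than $\delta^T R^{-M(1+i)-N}$. Applying Cramer's rule, with $D = \det M(A,\mathcal{Y})$ and $D_{uv}$ the corresponding cofactors, yields
\[
|t_v| \leq \frac{N \delta^T R^{-M(1+i)-N}}{|D|} \max_u |D_{uv}|
\]
for each $v$. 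Comparing this upper bound with the lower bound on $\max_v |t_v|$ above and cancelling $\delta^T$ produces exactly (\ref{last equation}), once one checks that the $R$-exponents combine via $Ni + M(1+i) + N = L(1+i)$ using $L = M + N$.

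The main obstacle is really just this exponent bookkeeping and being careful that $\mathcal{Y}$ depends only on $B(k_i)$ (not on $\Y$), so that a single determinant $D(A,\mathcal{Y})$ governs every integer vector $\Y$ that might satisfy (\ref{list3}) and (\ref{list4}). Once the basis is fixed at the start via Proposition \ref{Schmidt first lemma}, the rest is a linear-algebraic manipulation requiring no further input from the game.
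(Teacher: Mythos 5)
Your proposal is correct and follows essentially the same path as the paper: invoke Proposition~\ref{Schmidt first lemma} to confine the candidate $\Y$ to an $N$-dimensional subspace with fixed orthonormal basis $\mathcal{Y}$, derive the lower bound $\max_v|t_v|\geq \delta^T R^{Ni}/\sqrt{N}$ from the failure of (\ref{list3}) at $j=i-1$, apply Cramer's rule to (\ref{list4}) at $j=i$ for the matching upper bound, and combine the two using $Ni+M(1+i)+N=L(1+i)$. The only difference is expository: you make explicit the remark that $\mathcal{Y}$ is determined by $B(k_i)$ alone (not by the particular $\Y$), which the paper leaves tacit but uses in Part Two.
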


Thus, if Alice can play in such a way so that (\ref{last equation}) \underline{is not satisfied} by any point $A\in B(h_i)$, then when the game reaches stage $h_i$, for all $A\in B(h_i)$ the system of equations (\ref{list3}), (\ref{list4}) with $j = i$ will have no integer solution $\Y$.
The proof of Lemma \ref{lemmareduced} will be continued in the next section.

\renewcommand{\qedsymbol}{}\end{proof}

\section{Reduction of the proof to a technical lemma}
\label{reduction}

\ignore{

\noindent Set $\rho_{0}=\rho(U(k_{i}))$ and let $0<\mu '$ be chosen  
to satisfy
\begin{equation} 
\mu ^{'}\rho_{0}=R^{-L(1+i)}.
\label{first condition}
\end{equation}

\noindent Notice that by definition,
\begin{center}
$\beta R^{-L(\lambda +i)}\leq\rho_{0}<R^{-L(\lambda+i)}$.
\end{center}

\noindent and it follows by condition (\ref{condition on mu}) that

\begin{equation}
\mu '<\beta^{-1}R^{L(\lambda+i)-L(1+i)}=\beta^{-1}R^{-M}<\mu.
\label{second condition}
\end{equation}

\noindent Applying lemma \ref{hard theorem} with $\psi=L\sqrt{L}$,
Alice can enforce the first ball $B(i_{N})=B(h_{i})$ with
\begin{center}
$\rho(B(h_{i}))<\rho_{0}\mu ' =R^{-L(1+i)}$
\end{center}
to satisfy for every $A\in B(i_{N})$ 
\begin{center}
$|\vec{M}_{N}(A)|>L\sqrt{L}\rho_{0}\mu ' M_{N -1}U(i_{N})$.
\end{center}

\noindent Thus for every $A\in B(h_{i})$,

$|D|=|\vec{M}_{N}(A)|>L\sqrt{L} R^{-L(1+i)}M_{N -1}U(h_{i})>
 L\sqrt{L} R^{-L(1+i)}\max(\left|D_{11}\right|,
\left|D_{12}\right|,...,\left|D_{NN}\right|)$,

\noindent and so (\ref{last equation}) is not satisfied by any $\B_1,...,\B_N$ 
associated with a point $A\in B(h_{i})$.

One can show in almost the same way that if $B({h_{i}})$
has already chosen such that  
(\ref{19}) and (\ref{20}) have no solution for $A\in B(h_{i})$, Alice
can enforce $B(k_{i+1})$ to satisfy that for no $A\in B(k_{i+1})$ the system (\ref{16}) and (\ref{17})
has no solution.\\
\end{proof}

}
\ignore{

\noindent For a fixed $N$ and $v$, where $1\leq v \leq N$
and for any ${\mathcal{Y}} =\{\Y_1,...,\Y_N\}$,
there are ${\binom{N}{v}}^{2}$ matrices of the form 
\begin{equation}
\left( \B_{i_{k}}\cdot\Y_{j_{l}}\right)
\label{matrix 1}
\end{equation} 

\noindent where $1\leq i_1<...<i_v\leq N$ and $1\leq j_1<...<j_v\leq N$.

\noindent Define 

\begin{center}
$\vec{M}_{v, {\mathcal{Y}}}(A)\in \mathbb{R}^{{\binom{N}{v}}^2}$
\end{center}

\noindent as the vector whose components are 
the absolute value of the determinants of (\ref{matrix 1}) arranged
in some order.\\

\noindent Similarly for a fixed $M$ and $v$, where $1\leq v\leq M$
and for any ${\mathcal{Y^{'}}}=\{\Y_1,...,\Y_M\}$,
there are ${\binom{M}{v}}^{2}$ matrices of the form 
\begin{equation}
\left( \A_{i_{k}}\cdot\Y_{j_{l}}\right)
\label{matrix 2}
\end{equation} 

\noindent where $1\leq i_1<...<i_v\leq M$ and $1\leq j_1<...<j_v\leq M$.

\noindent Define 

\begin{center}
$\vec{M}^{'}_{v,{\mathcal{Y^{'}}}}(A)\in \mathbb{R}^{{\binom{M}{v}}^2}$
\end{center}

\noindent as the vector whose components are 
the absolute value of the determinants of (\ref{matrix 2}) arranged
in some order. As we shall consequently see, we shall only be assuming that 
the elements of the sets ${\mathcal{Y}}$ and ${\mathcal{Y}^{'}}$ are orthonormal, but the proofs
do not depend on a specific ${\mathcal{Y}}$ or ${\mathcal{Y^{'}}}$.
\noindent Thus from this point on, for any fixed 
${\mathcal{Y}}$, we shall write
$\vec{M}_v(A)$ to mean $\vec{M}_{v, {\mathcal{Y}}}(A)$.\\

\noindent Define $\vec{M}_0(A)$, (similarly $\vec{M}^{'}_0(A)$) and 
$\vec{M}_{-1}(A)$, (similarly $\vec{M}^{'}_{-1}(A)$) as the 
one dimensional vector $(1)$.
For a closed ball $B\subseteq \mathbb{R}^{H}$, let

\begin{center}
$M_v(B)=\max_{{A\in B}}\left|\vec{M}_v(A)\right|$ and 
$M_v^{'}(B)=\max_{{A\in B}}\left|\vec{M}^{'}_v(A)\right|$.
\end{center}

\ignore{

\vspace{10mm}

\subsubsection{More on absolutely friendly measures}

\noindent For a ball $B\subseteq\RN$ and a real valued function $f$ on $\RN$, let

\begin{center}
$\|f\|_{B}=\text{sup}_{\x\in B}|f(\x)|$.\\
\end{center}

\noindent As an immediate consequence of 
proposition 7.3 in \cite{KLW} one has
the following corollary.

\begin{cor}
Let $\tau$ be a Borel, finite, absolutely friendly measure on $\mathbb{R}^{H}$. 
Then for every $k$ there exist $K=K(k)$ and $\delta=\delta(k)$ such that if 
$f$ is a real polynomial function on $\mathbb{R}^{H}$ 
of a bounded total degree 
$k$, then for any ball $B\subseteq\mathbb{R}^{H}$ centered on 
$supp(\tau)$ and any $\varepsilon>0$, 
\begin{equation}
\label{good}
\tau(\{x\in B:|f(x)|<\varepsilon\})\leq K(\frac{\varepsilon}{\|f\|_{B}})^{\delta}\tau(B).
\end{equation}
\label{cor barak}
\end{cor}

\noindent Thus given a Borel, finite, absolutely friendly measure $\tau$ 
on $\mathbb{R}^{H}$ and a polynomial function of
total bounded degree $L$
with associated constants $K=K(L)$ and $\delta =\delta(L)$ as 
in corollary \ref{cor barak} , let 

\begin{center}
$0<\varepsilon_{0}=\varepsilon_{0}(\tau,M,N)$  
\end{center}

\noindent be small enough as to satisfy

\begin{equation}
K(\varepsilon_{0})^{\delta}<\frac{1}{2}.
\label{epsilon_0}
\end{equation}

}

}

In this section, let us fix a set of orthonormal vectors  ${\mathcal{Y}} =\{\Y_1,...,\Y_N\} \subseteq \mathbb{R}^L$. For each $v \in \{0,\ldots,N\}$ and for each $A\in\mathbb{R}^H$, consider the set of $v\times v$ minors of the matrix $M(A,\mathcal{Y})$ defined by (\ref{MAY}). Each minor can be described by a pair of sets $I,J\subseteq \{1,\ldots,N\}$ satisfying $\#(I) = \#(J) = v$. For each such pair, we define the map
\begin{align*}
D_{(I,J)} &:\mathbb{R}^H \rightarrow\mathbb{R}\\
D_{(I,J)}(A) &\df \det\left( \B_{i_k}\cdot\Y_{j_l}\right)_{k,l = 1,\ldots,v},
\end{align*}
where $I = \{i_k: k = 1,\ldots,v\}$ and $J = \{j_l: l = 1,\ldots,v\}$. In other words, $D_{(I,J)}(A)$ is the determinant of the $v\times v$ minor of $M(A,\mathcal{Y})$ described by the pair $(I,J)$. For shorthand, let $\omega = (I,J)$ and let $\Omega_v = \{(I,J): v = \#(I) = \#(J)\}$. The special cases $v = 0$ and $v = -1$ are dealt with as follows: $\Omega_0 = \{\xi\}$ and $D_\xi(A) = 1$ for all $A$, where $\xi = (\varnothing,\varnothing)$; $\Omega_{-1} = \varnothing$.

\noindent Define
\[
\vec{M}_v(A) = \vec{M}_{v,\mathcal{Y}}(A) \df \left(D_\omega(A)\right)_{\omega\in\Omega_v} \in \mathbb{R}^{{\binom{N}{v}}^2},
\]
and for shorthand let $M_v(A) = \|\vec{M}_v(A)\|$. For each ball $B\subseteq \R^H$ let
\[
M_v(B) \df \sup_{A\in B}M_v(A).
\]

\noindent We can now state our main lemma:

\begin{lemma}
For all $0<\beta<\frac13$, for all $\sigma\in\Reals$, and for all $0\leq v \leq N$ there exists
\begin{align*}
\nu_v &= \nu_v(M,N,\beta,\sigma) > 0
\end{align*}

\noindent such that for any $0 < \mu_v \leq \nu_v$ and for any set of orthonormal vectors $\mathcal{Y} = \Y_1,...,\Y_N \subseteq \mathbb{R}^L$, Alice can win the following finite game:

\begin{itemize}
\item Bob plays a closed ball $B\subseteq\mathbb{R}^H$ satisfying $\rho_B \df \rho(B) < 1$ and $\max_{A\in B}\|A\| \leq \sigma$.
\item Alice and Bob play the hyperplane game until the radius of Bob's ball $B_v$ is less than $\mu_v\rho_B$.
\item Alice wins if for all $A\in B_v$, we have
\begin{equation}
\label{inductionhypothesis}
M_v(A) > \nu_v\rho_B M_{v - 1}(B_v).
\end{equation}
\end{itemize}
\label{hard theorem}
\end{lemma}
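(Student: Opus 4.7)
We proceed by induction on $v$. The base case $v=0$ is immediate: $M_0(A)=1$ while $M_{-1}(A)=0$ by convention (since $\Omega_{-1}=\varnothing$), so (\ref{inductionhypothesis}) holds trivially for any $\mu_0,\nu_0$, with Alice making no moves. For the inductive step, assume the lemma at level $v-1$ with constant $\nu_{v-1}$, and organize Alice's strategy at level $v$ into two phases. In Phase~1 she invokes the level-$(v-1)$ strategy with some $\mu_{v-1}$ (to be chosen small enough later), forcing Bob into a ball $B_{v-1}\subseteq B$ with $\rho(B_{v-1})<\mu_{v-1}\rho_B$ and $M_{v-1}(A)>\nu_{v-1}\rho_B M_{v-2}(B_{v-1})$ for every $A\in B_{v-1}$.

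In Phase~2, Alice fixes $A^*\in B_{v-1}$ and a pair $\omega_0=(I_0,J_0)\in\Omega_{v-1}$ realizing $|D_{\omega_0}(A^*)|=M_{v-1}(B_{v-1})$. Since $v\le N$, she picks $i\in\{1,\dots,N\}\setminus I_0$ and $j\in\{1,\dots,N\}\setminus J_0$ to form the extended minor $\omega=(I_0\cup\{i\},J_0\cup\{j\})\in\Omega_v$. Cofactor expansion of $D_\omega(A)$ along the added row $i$ gives
\[
D_\omega(A)=\sum_{w\in J_0\cup\{j\}}(\pm 1)(\B_i\cdot\Y_w)\,D_{(I_0,(J_0\cup\{j\})\setminus\{w\})}(A),
\]
which is affine in each coordinate $\gamma_{ki}$ ($k=1,\dots,M$), with partial derivative a linear combination of $(v-1)\times(v-1)$ minors weighted by the coordinates $y_{w,k}$ of $\Y_w$. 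Using orthonormality of $\mathcal{Y}$ together with a pigeonhole over $(i,j,k)$, one finds a unit direction $\e\in\R^H$ along which the directional derivative of $D_\omega$ has magnitude at least $c\,M_{v-1}(B_{v-1})$ for a dimensional constant $c$. Because $D_\omega$ restricted to the line $A^*+t\e$ is exactly affine, and because higher-order Taylor corrections on $B_{v-1}$ are controlled by $O(\rho(B_{v-1})^2 M_{v-2})$ (hence negligible compared to $\rho(B_{v-1})M_{v-1}$ provided $\mu_{v-1}$ is small enough relative to $\nu_{v-1}$), the set $\{A\in B_{v-1}:|D_\omega(A)|\le\nu_v\rho_B M_{v-1}(B_{v-1})\}$ is contained in a $\beta\rho(B_{v-1})$-neighborhood of the hyperplane $\cl\subseteq\R^H$ on which the linearization of $D_\omega$ at $A^*$ vanishes, for $\nu_v$ chosen sufficiently small. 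Alice removes $\cl^{(\beta\rho(B_{v-1}))}$ and then plays arbitrary centered hyperplane moves to force Bob's ball below $\mu_v\rho_B$; every $A$ in the resulting final ball $B_v\subseteq B_{v-1}$ then satisfies $M_v(A)\ge|D_\omega(A)|>\nu_v\rho_B M_{v-1}(B_{v-1})\ge\nu_v\rho_B M_{v-1}(B_v)$.

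The main obstacle is the gradient lower bound in Phase~2: the partial derivatives of $D_\omega$ are linear combinations of $(v-1)\times(v-1)$ minors with coefficients coming from coordinates of the orthonormal vectors $\Y_w$, so \emph{a priori} such combinations could cancel to yield arbitrarily small derivatives in every direction. Ruling out such cancellation, via a Pythagorean-type identity exploiting the orthonormality of $\mathcal{Y}$ together with a pigeonhole choice of $(i,j)$ that isolates the dominant cofactor-expansion term $y_{j,k}D_{\omega_0}$, is the key technical step. A secondary subtlety is that $D_\omega$ is a polynomial of degree $v$ in $A$ rather than globally affine, so its sublevel sets are hypersurfaces rather than literal hyperplanes; this is resolved by forcing $B_{v-1}$ to be small enough in Phase~1 (via appropriate choice of $\mu_{v-1}$) that the linearization of $D_\omega$ at $A^*$ approximates it uniformly on $B_{v-1}$.
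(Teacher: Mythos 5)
Your overall architecture — induction on $v$, using the level-$(v-1)$ strategy as Phase 1, then in Phase 2 deleting a single hyperplane neighborhood and controlling the second-order Taylor error by the inductive hypothesis — matches the paper's proof. The gap, however, is precisely at the spot you flag as "the key technical step," and your proposed resolution does not work. You assert unconditionally that there is a direction along which the derivative of $D_\omega$ has magnitude $\gtrsim M_{v-1}(B_{v-1})$, to be proved "via a Pythagorean-type identity exploiting the orthonormality of $\mathcal{Y}$ together with a pigeonhole choice of $(i,j)$." Two problems. First, the orthonormality of $\Y_1,\ldots,\Y_N$ in $\R^L$ does \emph{not} yield a Pythagorean identity for the coordinate partials $\partial D_\omega/\partial\gamma_{ki_0}$, because those partials involve only the coefficients $y_{j_\ell,k}=\Y_{j_\ell}\cdot\e_k$ for $k=1,\ldots,M$, i.e.\ the projections $\y_{j_\ell}\in\R^M$ of the $\Y_{j_\ell}$; these projections are not orthonormal (indeed they cannot be when $N>M$, and can even vanish), so the cofactor terms can genuinely cancel in every coordinate direction. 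Second, and more fundamentally, the gradient lower bound is simply \emph{false} without an extra hypothesis. Consider $M=1$, $N=v=2$, $\Y_1=(1,0,0)$, $\Y_2=(0,1,0)$, and $A=(1,T)$ for large $T$: then $D_\omega(A)=-T$, $M_1(A)=T$, yet $\|\nabla D_\omega(A)\|=1\ll M_1(A)$. So near a point where $M_v(A)$ is already large relative to $M_{v-1}(A)$, there is no reason for $\nabla D_\omega$ to be large.

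The paper handles this by a case split that your proposal omits. If $M_v(A)>\varepsilon_1 M_{v-1}(A)$ for all $A$ in the current ball (Case 1), no deletion is needed and Alice passes; the conclusion follows from the Lipschitz comparison $M_{v-1}(B_{v-1})\le v\,M_{v-1}(A)$ (Claim~\ref{claimABv1}). Only in the complementary case, when there is a point $A$ with $M_v(A)\le\varepsilon_1 M_{v-1}(A)$, does one prove the gradient lower bound (Lemma~\ref{lemmaschmidt6}), and the proof \emph{uses} that hypothesis: the chosen direction is not a coordinate direction but the one with $A'\e_{i_0}=\Y_{j_0}-\sum_i(\Y_{j_0}\cdot\e_{M+i})\B_i$, so that orthonormality in $\R^L$ isolates $\pm D_{\omega'}(A)$ as the leading term, while the remainder collapses (via cofactor expansion) to a sum of $v\times v$ minors bounded by $\sqrt{N}M_v(A)$, giving $|\nabla_{A'}D_\omega(A)|\ge M_{v-1}(A)-\sqrt{N}M_v(A)$; the case hypothesis is exactly what makes this positive. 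Until you supply both the case split and the correct choice of direction, the Phase~2 deletion step is unjustified.
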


\ignore{

We let  $\mu =\min\{\mu_{N},\mu_{M}\}$, where $\mu_{N}$ and $\mu_{M}$ are
the constants implied by lemma \ref{hard theorem}.
}
\ignore{

\noindent The following lemma can be proved almost exactly as 
lemma \ref{hard theorem}, substituting $M$ for $N$ in the appropriate places. 

\begin{lemma}
Given $\tau$, a Borel, finite absolutely friendly measuror any $\psi>0$, there exists
\begin{center}
$0<\alpha_{2} =\alpha_{2} (M,N,\psi,\tau)$, 
\end{center}

\noindent and for any $0<\beta<1$, $0\leq \nu \leq M$ , there exists

\begin{center}
$\mu_{\nu}=\mu_{\nu}(M,N,\alpha_{2}, \beta,\psi,\tau)$
\end{center}

\noindent such that for any $\Y_{1},...,\Y_{M}$ orthonormal vectors in $\mathbb{R}^{L}$,
if a ball $U\subseteq\mathbb{R}^{H}$ satisfying 
$\rho(U)=\rho_{0}<1$ is reached by player Black at some 
stage of the $(\alpha_{2},\beta)$ game, then 
player White has a strategy enforcing the first of player Black's ball $U(i_{\nu})$ with
\begin{center}
$\rho(U(i_{\nu}))<\rho_{0}\mu_{\nu}$
\end{center}
to satisfy for every $A\in U(i_{\nu})$
\begin{equation}
|\vec{M}_{\nu}(A)|>\left(\frac{\varepsilon_{0}}{2}\right)^{\nu}\psi\rho_{0}\mu_{\nu} M_{\nu -1}U(i_{\nu}).
\end{equation}
\label{hard theorem 2}
\end{lemma}

Replacing

\begin{cor}
Given $\tau$, a Borel, finite absolutely friendly measure 
with $supp(\tau)={\mathcal{K}}$, where ${\mathcal{K}}$
is a compact subset of $\mathbb{R}^{H}$,   
we play Schmidt's game on ${\mathcal{K}}$ such that all balls 
chosen by the two players are centered on ${\mathcal{K}}$.  

\noindent There exists
\begin{center}
$0<\alpha =\alpha(M,N,\tau)$, 
\end{center}

\noindent and given any $0<\beta<1$, there exists

\begin{center}
$\mu=\mu(M,N,\alpha, \beta,\tau)$
\end{center}

\noindent such that for any $0<\mu ^{'}\leq \mu$ and for any $\Y_{1},...,\Y_{N}$ 
orthonormal vectors in $\mathbb{R}^{L}$,
if a ball $U\subseteq\mathbb{R}^{H}$ centered on ${\mathcal{K}}$ satisfying 
$\rho(U)<1$ is reached by player Black at some stage of the game, then 
player White has a strategy enforcing the first of player Black's ball $U(l)$ with
\begin{center}
$\rho(U(l))<\rho(U)\mu^{'}$
\end{center}
to satisfy for every $A\in U(l)$
\begin{equation}
|\vec{M}_{N}(A)|>L\sqrt{L}\rho(U)\mu^{'} M_{N -1}(U(l)).
\label{inductionhypothesis}
\end{equation}
Alternatively under the same assumptions on $U$, for any $\Y_{1},...,\Y_{M}$  
orthonormal vectors in $\mathbb{R}^{L}$ 
player White has a strategy enforcing the first of player Black's balls $U(l^{'})$ with
\begin{center}
$\rho(U(l^{'}))<\rho(U)\mu^{'}$
\end{center}
to satisfy for every $A\in U(l^{'})$
\begin{equation}
|\vec{M}^{'}_{M}(A)|>L\sqrt{L}\rho(U)\mu^{'} M^{'}_{M -1}(U(l^{'})).
\label{46a}
\end{equation}

\label{corollary of main lemma}

\end{cor}

\begin{proof}
\noindent Replace $\psi$ in lemmas \ref{hard theorem} and \ref{hard theorem 2} 
with $L\sqrt{L}\left(\frac{2}{\varepsilon_{0}}\right)^{L}$. 

\noindent Set 
\begin{center}
$\alpha=\min\{\alpha_{1},\alpha_{2}\}$
and $\mu =\min\{\mu_{N},\mu^{'}_{M}\}$.
\end{center}
\noindent Notice that by lemma \ref{hard theorem} (lemma \ref{hard theorem 2}), 
if 
\begin{center}
$0<\mu '\leq \mu _{N}$\hspace{3mm} \text{similarily}\hspace{3mm} $0<\mu '\leq \mu _{M}$ 
\end{center}

\noindent and

\begin{center}
$\rho(U(i_{N}))<\mu '\rho_{0}$ 
\hspace{3mm} \text{similarily}\hspace{3mm}$\rho(U(i_{N}))<\mu '\rho_{0}$,
\end{center}

\noindent then obviously every $A\in U(i_{N})$ ($A\in U(i_{M})$) will satisfy (\ref{inductionhypothesis}). 

\end{proof}

}

\noindent The proof of Lemma \ref{hard theorem} will be delayed until Section \ref{sectionhardtheoremproof}. For now we will assume Lemma \ref{hard theorem}, and use it to complete the proof of Lemma \ref{lemmareduced}:

\begin{proof}[Proof of Lemma \ref{lemmareduced}, Part Two]

Let $\nu_N > 0$ be the number guaranteed by Lemma \ref{hard theorem} for $v = N$. Suppose $R\geq R_1$ is large enough so that

\[
R^{-M}\leq \frac{1}{N\sqrt N}\beta\nu_N.
\]

\noindent Now suppose that the game has progressed to stage $k_i$, and let $\rho_{k_i} = \rho(B(k_i))$. If $k_i > 0$, then since $k_i$ is the minimal integer such that (\ref{rhoB1}) is satisfied with $B = B(k_i)$, we have
\[
\rho_{k_i} \geq \beta R^{-L(\lambda + i)}.
\]

\noindent We can ensure that $k_i > 0$ for all $i\geq 0$ by requiring that
\[
R^{-M} \leq \rho_0,
\]
where $\rho_0$ is the radius of Bob's first ball. In particular, we have
 
\[
\mu_N \df \frac{R^{-L(1 + i)}}{\rho_{k_i}} \leq \frac{1}{\beta R^M} \leq \frac{\nu_N}{N\sqrt N}.
\]
\noindent Since the right hand side is bounded above by $\nu_N$, it follows that this is a valid choice of $\mu_N$.

\noindent Let $\mathcal{Y} = \{\Y_1,\ldots,\Y_N\}$ be an orthonormal basis for a subspace of $\R^L$ containing the set $S$ defined by (\ref{setS}).
This sets the stage for the finite game described in Lemma \ref{hard theorem}, which the lemma says Alice can win.
Now, the finite game is played until Bob's ball $B$ satisfies

\[
\rho(B) < \mu_N \rho_{k_i} = R^{-L(1 + i)},
\]

\noindent in other words, the last ball that Bob plays in the finite game is exactly $B(h_i)$. Since Alice wins the finite game, we have that for every $A\in B(h_i)$, (\ref{inductionhypothesis}) holds with $v = N$. In particular, we have $\omega = (\{1,\ldots,N\},\{1,\ldots,N\}) \in \Omega_N$ and so for all $A\in B(h_i)$

\[
|D(A)| > \nu_N \rho_{k_i}M_{N - 1}(B(h_i))
\geq N\sqrt N R^{-L(1 + i)}\max(|D_{11}(A)|,|D_{12}(A)|,\ldots,|D_{NN}(A)|)
\]
\noindent i.e. (\ref{last equation}) is not satisfied for any point $A\in B(h_i)$. On the other hand, fixing $A\in B(h_i)$, we see by Lemma \ref{lemmareducedpartone} that if there exists an integer point $\Y\in\mathbb{Z}^L$ satisfying (\ref{list3}) and (\ref{list4}) with $j = i$, then (\ref{last equation}) would also be satisfied, a contradiction. Thus the system of equations (\ref{list3}), (\ref{list4}) with $j = i$ has no integer solution $\Y$.
\end{proof}

\ignore{
\noindent Let $\X$, $\Y$, ${\mathcal{A}}(\X)$ and ${\mathcal{B}}(\Y)$
be as in (\ref{special X}), (\ref{special Y}), (\ref{special A}) and (\ref{special B})
and set $\lambda =N/L$ .\\

We can now prove the following:

\begin{lemma}
\label{last lemma}
There exists $R=R(M,N,\beta, \rho)$
such that 
Alice can direct the game in such a way that for every $i,k\in\mathbb{N}$,  
if $B(k)$ of the game satisfies 
\begin{equation}
\rho (B(k))<R^{-L(\lambda+i)}, 
\label{15}
\end{equation} 
then for all $A\in B(k)$ the system

\begin{equation}
0<\|\x\|<\delta R^{M(\lambda +i)}
\label{16}
\end{equation}

\begin{equation}
\|{\mathcal{A}}(\X)\|<\delta R^{-N(\lambda +i)-M}
\label{17}
\end{equation}
 
\noindent has no solution $\X$ as in (\ref{special X}). 

\noindent She can also direct the game such that
for every $i,h\in\mathbb{N}$ 
if $B(h)$ satisfies
 
\begin{equation}
\rho (B(h))<R^{-L(1+i)}, 
\label{18}
\end{equation} 
then for all $A\in B(h)$ the system

\begin{equation}
0<\|\y\|<\delta^{T} R^{N(1 +i)}
\label{19}
\end{equation}

\begin{equation}
\|{\mathcal{B}}(\Y)\|<\delta^{T} R^{-M(1 +i)-N}.
\label{20}
\end{equation} 

\noindent has no solution $\Y$ as in (\ref{special Y}).
\end{lemma}

}

\ignore{

\begin{proof}

\noindent Let $B(k_{i})$ be the first ball of the game with 
(\ref{15}), and $B(h_{i})$ for the first ball with (\ref{18}).

\noindent In order for Proposition \ref{Schmidt first lemma} and Proposition \ref{Schmidt second lemma}
to be applicable, we first demand that 

\begin{center}
$R>\max\{R_{1},R_{2}\}$,
\end{center}

\noindent where $R_{1}$ and $R_{2}$ are as defined
in Proposition \ref{Schmidt first lemma} and Proposition \ref{Schmidt second lemma}.

\noindent Next, let 
\begin{equation}
\label{nested}
R>\max\{\rho^{-\frac{1}{L\lambda}}, \beta^{-1}\}.
\end{equation}

It is easily checked that condition (\ref{nested}) ensures that 
$\rho(B(k_{0}))< \rho$, and the sequence  
\begin{equation}
B(k_{0})\supseteq B(h_{0})\supseteq B(k_{1})\supseteq B(h_{1})\supseteq\ldots
\label{21}
\end{equation}
is strictly decreasing.

\noindent Finally we demand that   
\begin{equation}
\label{condition on mu}
R>(\beta\mu)^{-\frac{1}{M}}
\end{equation}

\noindent We shall prove the lemma by induction on $i$.

For the base case
we notice that for $i=0$

\begin{center}
$\|\x\|\geq 1>\delta R^{M\lambda}$.
\end{center}

Therefore (\ref{16}) and (\ref{17}) have no solution $\X$ if $A\in B(k_{0})$.

 We thus assume

\begin{center}
$B(0)\supseteq\ldots\supseteq B(k_{0})\supseteq\ldots \supseteq B(k_{i})$
\end{center}

have been already chosen such that for every $0\leq j\leq i$, 
(\ref{16}) and (\ref{17}) have no solution for $A\in B(k_{j})$,
and dually we assume that 

\begin{center}
$B(0)\supseteq \ldots\supseteq B(k_{0})\supseteq\ldots \supseteq B(k_{i})\supseteq\ldots\supseteq B(h_{i})$
\end{center}

have been already chosen such that for every $0\leq j\leq i$,
(\ref{19}) and (\ref{20}) have no solution for $A\in B(h_{j})$.
Thus it remains to prove that if

\begin{center}
$B(0)\supseteq\ldots\supseteq B(k_{0})\supseteq\ldots \supseteq B(k_{i})$
\end{center}

have been already chosen such that for every $0\leq j\leq i$, 
(\ref{16}) and (\ref{17}) have no solution for $A\in B(k_{j})$, Alice
can enforce that (\ref{19}) and (\ref{20}) have no solution if $A\in B(h_{i})$.

Suppose that there are solutions $\Y$ of (\ref{19}) and (\ref{20}) 
with vectors $\B_1,...,\B_N$ associated 
with a point $A$ in $B(k_{i})$.
By our assumptions it is sufficient to consider points $\Y$ satisfying 

\begin{equation}
\delta ^{T}R^{N(1+i-1)}\leq \|{\bf{y}}\| <\delta ^{T}R^{N(1+i)}.
\label{33}
\end{equation}  

\noindent Thus in particular 
\begin{center}
$\delta ^{T}R^{N(1+i-1)}\leq \|\Y\|$.
\end{center}
By Lemma \ref{Schmidt first lemma}, the vectors $\Y$ will be contained in an $N$-dimensional subspace of $\RL$.
Let $\Y_1,...,\Y_N$ be an orthonormal basis of this subspace and
suppose that the integer point $\Y=t_1\Y_1+...+t_N\Y_N$ satisfies (\ref{20}) and (\ref{33}). 
We have that 

\begin{center}
$\delta ^{T}R^{N(1+i-1)}\leq \|\Y\|\leq \left|\Y\right|=\sqrt{t_{1}^{2}+\ldots +t_{N}^{2}}\leq
\sqrt{N}\max(\left|t_1 \right|,...,\left|t_N\right|)$.
\end{center}

\noindent And so,

\begin{equation}
\frac{1}{\sqrt{N}}R^{N(1+i-1)}\leq \max(\left|t_1 \right|,...,\left|t_N\right|),
\label{35}
\end{equation}

\noindent and

\begin{center}
$\begin{matrix}
\left|t_1(\B_1\cdot\Y_1)+...+t_N(B_1\cdot\Y_N)\right|<\delta ^{T}R^{-M(1+i)-N}\\
.\\
.\\
.\\
\left|t_1(\B_N\cdot\Y_1)+...+t_N(B_N\cdot\Y_N)\right|<\delta ^{T}R^{-M(1+i)-N}.
\end{matrix}$
\end{center}

\noindent Let $D$ be the determinant of $(\B_u\cdot\Y_v)_{1\leq u,v\leq N}$, 
and let $D_{uv}$ be the cofactor of $\B_u\cdot\Y_v$
in this determinant.
By Cramer's rule we get for every $1\leq\nu\leq N$

\begin{equation}
|t_{v}D|\leq N\delta ^{T}R^{-M(1+i)-N}\max(|D_{11}|,|D_{1v}|,...,|D_{Nv}|)
\label{cramer}
\end{equation}

\noindent and in conjunction with (\ref{35}) we get

\begin{equation}
\left|D\right|\leq N\sqrt{N}R^{-L(1+i)}\max(\left|D_{11}\right|,
\left|D_{12}\right|,...,\left|D_{NN}\right|).
\label{last equation}
\end{equation}

\noindent Alice's strategy is to play in such a way 
such that (\ref{last equation}) \underline{is not satisfied} by any $\B_1,...,\B_N$ 
associated with a point $A\in U(h_{i})$. \\

\noindent Set $\rho_{0}=\rho(U(k_{i}))$ and let $0<\mu '$ be chosen  
to satisfy
\begin{equation} 
\mu ^{'}\rho_{0}=R^{-L(1+i)}.
\label{first condition}
\end{equation}

\noindent Notice that by definition,
\begin{center}
$\beta R^{-L(\lambda +i)}\leq\rho_{0}<R^{-L(\lambda+i)}$.
\end{center}

\noindent and it follows by condition (\ref{condition on mu}) that

\begin{equation}
\mu '<\beta^{-1}R^{L(\lambda+i)-L(1+i)}=\beta^{-1}R^{-M}<\mu.
\label{second condition}
\end{equation}

\noindent Applying lemma \ref{hard theorem} with $\psi=L\sqrt{L}$,
Alice can enforce the first ball $B(i_{N})=B(h_{i})$ with
\begin{center}
$\rho(B(h_{i}))<\rho_{0}\mu ' =R^{-L(1+i)}$
\end{center}
to satisfy for every $A\in B(i_{N})$ 
\begin{center}
$|\vec{M}_{N}(A)|>L\sqrt{L}\rho_{0}\mu ' M_{N -1}U(i_{N})$.
\end{center}

\noindent Thus for every $A\in B(h_{i})$,

$|D|=|\vec{M}_{N}(A)|>L\sqrt{L} R^{-L(1+i)}M_{N -1}U(h_{i})>
 L\sqrt{L} R^{-L(1+i)}\max(\left|D_{11}\right|,
\left|D_{12}\right|,...,\left|D_{NN}\right|)$,

\noindent and so (\ref{last equation}) is not satisfied by any $\B_1,...,\B_N$ 
associated with a point $A\in B(h_{i})$.

One can show in almost the same way that if $B({h_{i}})$
has already chosen such that  
(\ref{19}) and (\ref{20}) have no solution for $A\in B(h_{i})$, Alice
can enforce $B(k_{i+1})$ to satisfy that for no $A\in B(k_{i+1})$ the system (\ref{16}) and (\ref{17})
has no solution.\\
\end{proof}

}

\ignore{

\subsubsection{Proof of theorem}
\label{Bad0proof}
\begin{proof}

\noindent Let $\X$ be as defined in (\ref{special X}), i.e.
$\X\in\mathbb{Z}^{L}$ and
 
\begin{center}
$\X=(x_1,...,x_N,...,x_L) : \x=(x_1,...,x_N)\neq (0,...,0)$.
\end{center}

\noindent Then for some $i\in \mathbb{N}$,

\begin{center}
$\delta R^{M(\lambda +i-1)}\leq \|\x\|<\delta R^{M(\lambda +i)}$.
\end{center}

\noindent By lemma \ref{last lemma},
Alice can direct the game in such a way that 
if $B(k_{i})$ of the game satisfies 
\begin{center}
$\rho (B(k_{i}))<R^{-L(\lambda+i)}$, 
\end{center} 
then for all $A\in B(k_{i})$

\begin{center}
$\|{\mathcal{A}}(\X)\|\geq \delta R^{-N(\lambda +i)-M}$
\end{center}
 
\noindent Successively applying lemma \ref{last lemma} to ever increasing $i$, Alice can direct
the game such that $A=\bigcap_{i=0}^{\infty}U(k_{i})$ will satisfy for every 
$\X$ as defined in (\ref{special X}) 

\begin{center}
$\|\x\|^{N}\|{\mathcal{A}}\X\|^{M}\geq \delta^{L}R^{-NM-M^2}$.
\end{center}

\noindent Recalling (\ref{BA def}) we are done, letting 
\begin{center}
$0<C<\delta^{L}R^{-NM-M^2}$.
\end{center}

\end{proof}

}

With this lemma, we complete the proof of Theorem \ref{maintheorem}:

\begin{proof}[Proof of Theorem \ref{maintheorem}]
Let $R\in\Reals$ be chosen large enough so that Lemma \ref{lemmareduced} holds.
Note that when $i = 0$, the equation (\ref{list1}) has no integer solution $\X$,
and when $j = 0$ the equation (\ref{list3}) has no integer solution $\Y$.
Thus Alice may make dummy moves until stage $h_0$,
at which point the hypotheses of Lemma \ref{lemmareduced}(ii) hold.
Then Alice has a strategy to ensure that for all $A\in B(k_1)$,
the system of equations (\ref{list1}), (\ref{list2}) with $i = 1$ has no integer solution $\X$.
By continuing in this way, Alice ensures that if $A$ is the intersection point of the balls $(B(k))_{k\in\Nat}$,
then for all $i\in\Nat$, the system of equations (\ref{list1}),
(\ref{list2}) has no integer solution $\X$.
By Observation \ref{observationwindows}, this implies that $A$ is badly approximable.
Thus the set of badly approximable systems of linear forms is HAW.

\end{proof}

\section{Proof of lemma \ref{hard theorem}} 
\label{sectionhardtheoremproof}
\ignore{
Suppose that ${\mathcal{Y}} =\{\Y_1,...,\Y_N\}$ is an orthonormal set of vectors in $\mathbb{R}^L$. For each $I,J\subseteq \{1,\ldots,N\}$ with $v \df \#(I) = \#(J)$, we define the map
\begin{align*}
D_{(I,J)} &:\mathbb{R}^H \rightarrow\mathbb{R}\\
D_{(I,J)}(A) &\df \det\left( \B_{i_k}\cdot\Y_{j_l}\right)_{k,l = 1,\ldots,v},
\end{align*}
where $I = \{i_k: k = 1,\ldots,v\}$ and $J = \{j_l: l = 1,\ldots,v\}$. For shorthand, let $\omega = (I,J)$ and let $\Omega_v = \{(I,J): v = \#(I) = \#(J)\}$. Define 
\[
\vec{M}_v(A) = \vec{M}_{v,\mathcal{Y}}(A) \df \left(D_\omega(A)\right)_{\omega\in\Omega_v} \in \mathbb{R}^{{\binom{N}{v}}^2}.
\]
The special cases $v = 0$ and $v = -1$ are dealt with as follows: $\Omega_0 = \{0\}$ and $D_0(A) = 1$ for all $A$; $\Omega_{-1} = \varnothing$.

If $B$ is a ball, we define
\[
M_\omega(B) \df \sup_{A\in B}|D_\omega(A)|.
\]

If $\omega' = (I',J')\in\Omega_{v'}$ for $v' < v$, we write $\omega' < \omega$ if $I'\subseteq I$ and $J'\subseteq J$. For $v' < v$ fixed let
\[
\Omega_{v',\omega} \df \{\omega'\in\Omega_{v'}:\omega' < \omega\}.
\]
}
The following two lemmas are essentially due to Schmidt, however we include their proofs for completeness:

\begin{lemma}
\label{lemmaschmidt5}
Fix $A\in B$, $v \in \{0,\ldots,N\}$, and $\omega\in\Omega_v$. Then
\begin{align} \label{nabla}
\|\nabla D_\omega(A)\|_\op &\leq v M_{v - 1}(A)\\ \label{nablanabla}
\|\nabla\nabla D_\omega(A)\|_\op &\leq v^2 M_{v - 2}(A).
\end{align}
\end{lemma}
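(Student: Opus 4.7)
The plan is to exploit that $D_\omega(A)$ is a polynomial in $A$ which, by the structure of $\B_i = (\gamma_{1i},\ldots,\gamma_{Mi},0,\ldots,1,\ldots,0)$, is $v$-multilinear in the columns of $A$ indexed by $I$ (and constant in all other columns). Write $\tilde{\B}_i(U) = (u_{1i},\ldots,u_{Mi},0,\ldots,0) \in \Reals^L$ for the linear part of $\B_i$ evaluated at a perturbation matrix $U$, so $\|\tilde{\B}_i(U)\| = \|U^{(i)}\|$, the Euclidean norm of the $i$th column. Expanding $\det\bigl(M_\omega(A) + t\,\tilde M_\omega(U)\bigr)$ in $t$ using multilinearity of the determinant in rows, we obtain
\[
\nabla D_\omega(A)\cdot U = \sum_{k=1}^{v}\det M_k(A,U),
\]
where $M_k(A,U)$ is $M_\omega(A)$ with its $k$th row replaced by $\bigl(\tilde{\B}_{i_k}(U)\cdot \Y_{j_l}\bigr)_{l=1}^v$.

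For the gradient bound I would expand each $\det M_k$ by Laplace along the replaced row:
\[
\det M_k(A,U) = \sum_{l=1}^{v}(-1)^{k+l}\bigl(\tilde{\B}_{i_k}(U)\cdot \Y_{j_l}\bigr)\,D_{\omega'}(A),
\]
where $\omega' = (I\setminus\{i_k\},J\setminus\{j_l\})\in\Omega_{v-1}$. Applying Cauchy--Schwarz in the sum over $l$ and using Bessel's inequality $\sum_l\bigl(\tilde{\B}_{i_k}(U)\cdot\Y_{j_l}\bigr)^2 \leq \|\tilde{\B}_{i_k}(U)\|^2 = \|U^{(i_k)}\|^2$ (the $\Y_{j_l}$'s are orthonormal), we get $|\det M_k(A,U)| \leq \sqrt{v}\,M_{v-1}(A)\,\|U^{(i_k)}\|$. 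A second Cauchy--Schwarz on $k$, using $\sum_k \|U^{(i_k)}\|^2 \leq \|U\|^2$, then gives $|\nabla D_\omega(A)\cdot U|\leq v\,M_{v-1}(A)\,\|U\|$, which is precisely \eqref{nabla}.

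The Hessian bound follows by the same template with two rows modified. Extracting the $ts$-coefficient of $\det\bigl(M_\omega(A) + t\tilde M_\omega(U) + s\tilde M_\omega(V)\bigr)$ gives
\[
\nabla^2 D_\omega(A)(U,V) = \sum_{k\neq k'}\det R_{k,k'}(A,U,V),
\]
where $R_{k,k'}$ has its $k$th row from $\tilde M_\omega(U)$ and its $k'$th row from $\tilde M_\omega(V)$. Two successive Laplace expansions (along row $k$, then along row $k'$) exhibit $\det R_{k,k'}$ as a sum over $l\neq l'$ of $\pm \bigl(\tilde{\B}_{i_k}(U)\cdot\Y_{j_l}\bigr)\bigl(\tilde{\B}_{i_{k'}}(V)\cdot\Y_{j_{l'}}\bigr) D_{\omega''}(A)$ with $\omega''\in\Omega_{v-2}$. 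Cauchy--Schwarz combined with Bessel gives $|\det R_{k,k'}|\leq \sqrt{v(v-1)}\,M_{v-2}(A)\,\|U^{(i_k)}\|\,\|V^{(i_{k'})}\|$, and then the double sum over $k,k'$ is controlled by $\bigl(\sum_k\|U^{(i_k)}\|\bigr)\bigl(\sum_{k'}\|V^{(i_{k'})}\|\bigr) \leq v\,\|U\|\,\|V\|$, yielding \eqref{nablanabla}.

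The main obstacle is not any single estimate but ensuring the constants come out as stated: a naive application of the triangle inequality at each Laplace expansion loses a factor of $v$. The crucial trick is to apply Cauchy--Schwarz \emph{before} taking absolute values, so that Bessel's inequality for the orthonormal system $\mathcal{Y}$ can absorb the sum over the minor indices $l$ (and, in the Hessian case, $l'$) and leave behind only the column norms $\|U^{(i_k)}\|$, which are then efficiently summed by a further Cauchy--Schwarz against the Frobenius norm $\|U\|$. Sign conventions in the Laplace expansions are irrelevant since all subsequent bounds pass through absolute values.
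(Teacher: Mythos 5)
Your proof is correct and follows essentially the same route as the paper: the paper's equation (\ref{derivativeofDIJ}) is exactly the cofactor expansion you obtain by extracting the linear term and Laplace-expanding, and the subsequent bound $\sum_{k,\ell}|A'\e_{i_k}\cdot\Y_{j_\ell}| \le \sqrt{v}\sum_k\|A'\e_{i_k}\| \le v\|A'\|$ is precisely your Cauchy--Schwarz/Bessel estimate in the minor index followed by Cauchy--Schwarz in the row index against the Frobenius norm, with the Hessian handled by the same device applied twice.
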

\begin{proof}
Write $\omega = (I,J)$. For any matrix $A'\in \Reals^H$, it is readily computed that
\begin{equation}
\label{derivativeofDIJ}
\nabla_{A'} D_{(I,J)}(A) = \sum_{k,\ell = 1}^v \pm \left[A'\e_{i_k}\cdot \Y_{j_\ell}\right]D_{(I\setminus\{i_k\},J\setminus\{j_\ell\})}(A).
\end{equation}
Here $\e_1,\ldots,\e_N$ are the standard basis vectors for $\Reals^N$. We identify a vector $\x\in\Reals^M$ with its image in $\Reals^L$ under the inclusion map. It follows that
\[
|\nabla_{A'} D_{(I,J)}(A)| \leq M_{v - 1} \sum_{k,\ell = 1}^v \left|A'\e_{i_k}\cdot \Y_{j_\ell}\right|\\
\leq M_{v - 1} \sqrt{v} \sum_{k = 1}^v \|A'\e_{i_k}\|\\
\leq M_{v - 1} v \|A'\|,
\]
yielding (\ref{nabla}).

Differentiating (\ref{derivativeofDIJ}) with respect to some $A''\in \Reals^H$ yields
\[
\nabla_{A''}\nabla_{A'} D_{(I,J)}(A) = \sum_{k,\ell = 1}^v \sum_{k',\ell' = 1}^v \pm \left[A'\e_{i_k}\cdot \Y_{j_\ell}\right]\left[A''[\e_{i_{k'}}]\cdot \Y_{j_{\ell'}}\right]D_{(I\setminus\{i_k,i_{k'}\},J\setminus\{j_\ell,j_{\ell'}\})}(A)
\]
and a similar computation yields (\ref{nablanabla}).
\end{proof}

\begin{lemma}
\label{lemmaschmidt6}
Fix $A\in B$ and $v \in \{0,\ldots,N\}$. There exists constants $\varepsilon_1,\varepsilon_2 > 0$ depending only on $M$, $N$, and $\sigma$ such that if
\[
M_v(A) \leq \varepsilon_1 M_{v - 1}(A)
\]
then
\[
\max_{\omega\in\Omega_v}\|\nabla D_\omega(A)\|_\op > \varepsilon_2 M_{v - 1}(A).
\]
\end{lemma}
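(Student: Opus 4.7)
The plan is to prove the lemma by contradiction via a compactness argument. Suppose it fails: then there exist sequences $(A_n,\mathcal{Y}_n)$ with $\|A_n\|\leq\sigma$ and $\mathcal{Y}_n=\{\Y^{(n)}_1,\ldots,\Y^{(n)}_N\}\subseteq\R^L$ orthonormal, with $M_{v-1}(A_n)>0$, such that $M_v(A_n)\leq\tfrac{1}{n}M_{v-1}(A_n)$ and $\max_{\omega\in\Omega_v}\|\nabla D_\omega(A_n)\|_{\op}\leq\tfrac{1}{n}M_{v-1}(A_n)$. The set $\{A\in\R^H:\|A\|\leq\sigma\}\times\{\text{orthonormal }N\text{-frames in }\R^L\}$ is compact; after passing to a subsequence, $(A_n,\mathcal{Y}_n)\to(A_*,\mathcal{Y}_*)$. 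Handling the degenerate case $M_{v-1}(A_n)\to 0$ by a separate rescaling (absorbable into the main case), we may assume $M_{v-1}(A_*)>0$; continuity of the polynomials $D_\omega$ and of their gradients then forces $D_\omega(A_*)=0$ and $\nabla D_\omega(A_*)=0$ for every $\omega\in\Omega_v$.

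Next I extract algebraic consequences of the vanishing gradients. Applying identity \eqref{derivativeofDIJ} with $A'\in\R^H$ having only column $i\in I$ nonzero and equal to an arbitrary $\y\in\R^M$, the condition $\nabla D_{(I,J)}(A_*)=0$ becomes the vector identity
\[
\sum_{\ell=1}^{v}(-1)^{v+\ell}\y_{j_\ell}\,D_{(I',J\setminus\{j_\ell\})}(A_*)=0\in\R^M\qquad\text{for all }|I'|=v-1,\ |J|=v,
\]
where $\y_j\in\R^M$ denotes the projection of $\Y_j$ onto the first $M$ coordinates and $I'=I\setminus\{i\}$. Let $\omega_0=(I_0,J_0)\in\Omega_{v-1}$ realize $|D_{\omega_0}(A_*)|=M_{v-1}(A_*)>0$; specializing the identity to $I'=I_0$, $J=J_0\cup\{j^*\}$ for each $j^*\notin J_0$ and dividing through by $D_{\omega_0}(A_*)$ shows $\y_{j^*}\in\mathrm{span}\{\y_j:j\in J_0\}$, so that all $N$ vectors $\y_1,\ldots,\y_N$ lie in a common subspace $V\subseteq\R^M$ of dimension at most $v-1$.

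The final step derives a contradiction from three ingredients: the containment $\y_j\in V$; the rank bound $\mathrm{rank}\,T(A_*)\leq v-1$, where $T(A)=(\B_u\cdot\Y_k)_{u,k=1}^{N}$ (from vanishing of all $v$-minors); and the orthonormality of $\{\Y_j\}$. Writing $\Y_j=(\y_j,\z_j)$ with $\z_j\in\R^N$, orthonormality yields the Gram identity $(\z_j\cdot\z_k)_{j,k}=I_N-(\y_j\cdot\y_k)_{j,k}$; since the subtracted matrix has rank at most $\dim V\leq v-1$, the $\z_j$'s span a subspace of dimension at least $N-v+1$. On the other hand the explicit decomposition $T(A_*)=A_*^{\top}Y+Z^{\top}$ (with $Y=[\y_1\,|\,\cdots\,|\,\y_N]$, $Z=[\z_1\,|\,\cdots\,|\,\z_N]$) combined with $\mathrm{rank}\,T\leq v-1$ and $\mathrm{rank}\,Y\leq v-1$ forces $\mathrm{rank}\,Z\leq 2(v-1)$. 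When $v<(N+3)/3$ this already contradicts the lower bound; in the remaining range one needs to exploit the identity above for row sets $I'\neq I_0$ (which produces additional vanishing relations among certain $(v-1)$-minors when $D_{(I',J_0)}=0$) together with the combinatorics of the minor lattice to extract sharper rank constraints. This final algebraic reconciliation is the main obstacle, since the naive rank count alone falls short in the high-$v$ regime; however, the desired contradiction is already transparent in the $v=2$ base case, where $\y_j=cv_j$ collapses $T$ to rank one, then forces $\z_j=v_j w$, and then the off-diagonal orthonormality $0=v_jv_k(\|c\|^2+\|w\|^2)$ is incompatible with $M_1(A_*)>0$, and the general case proceeds by the analogous bookkeeping after choosing an orthonormal basis of $V$.
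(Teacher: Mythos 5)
The paper's proof is direct and elementary: it fixes the $(v-1)$-minor $\omega'=(I',J')$ maximizing $|D_{\omega'}(A)|$, enlarges it to a $v$-minor $\omega=(I,J)=(I'\cup\{i_0\},J'\cup\{j_0\})$, and explicitly exhibits a single direction $A'$ (the matrix with all columns zero except column $i_0$, which is set to $\Y_{j_0}-\sum_{i=1}^N[\Y_{j_0}\cdot\e_{M+i}]\B_i$) for which $|\nabla_{A'}D_\omega(A)|\geq M_{v-1}(A)-\sqrt{N}\,M_v(A)$ while $\|A'\|\leq 1+\sqrt{N}\sigma$. One then simply takes $\varepsilon_1 < 1/\sqrt{N}$. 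No compactness, no limiting argument, and the constants are explicit. Your compactness-and-contradiction route is genuinely different, but as written it has two gaps that cannot be waved away.

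First, the degenerate case $M_{v-1}(A_n)\to 0$ is not ``absorbable into the main case by rescaling.'' The entries of $M(A,\mathcal{Y})$ are \emph{affine}, not linear, in $A$ (the $\B_u$'s carry the fixed unit vectors $\e_{M+u}$), so $D_\omega$ does not rescale homogeneously under $A\mapsto\lambda A$, and that scaling also moves you off the constraint $\|A\|\leq\sigma$. If $M_{v-1}(A_n)\to 0$, the hypotheses $M_v(A_n)\leq\tfrac1n M_{v-1}(A_n)$ and $\max_\omega\|\nabla D_\omega(A_n)\|_\op\leq\tfrac1n M_{v-1}(A_n)$ say nothing about the limit; you cannot conclude $D_\omega(A_*)=0$ or $\nabla D_\omega(A_*)=0$. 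This is precisely the regime where your argument stops producing information, and it is not an edge case that can be deferred -- the natural quantity to normalize against is $M_{v-1}$, and controlling its smallness is the whole content of the lemma.

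Second, and decisively, you state yourself that ``the naive rank count alone falls short in the high-$v$ regime'' and that the general case ``proceeds by the analogous bookkeeping'' -- but no such bookkeeping is supplied. A proof that verifies only $v=2$ and asserts the rest is not a proof. Moreover, even granting the limit point, your claim that the vanishing gradients force all $\y_j$ into a $(v-1)$-dimensional subspace uses only the column-$i$ specialization of \eqref{derivativeofDIJ} with the maximizing $\omega_0$; you'd need to justify that this works uniformly (the maximizer can sit anywhere in $\Omega_{v-1}$), and the subsequent rank arithmetic on $T(A_*)=A_*^\top Y+Z^\top$ gives only $\mathrm{rank}\,Z\leq 2(v-1)$, which you acknowledge is not strong enough for general $v$. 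Even if both gaps were filled, the compactness route would only yield non-explicit $\varepsilon_1,\varepsilon_2$, whereas the paper's computation hands you explicit bounds and, just as importantly, a proof that transparently uses the orthonormality of $\mathcal{Y}$ and the affine structure of $\B_u$ in exactly one place. I'd recommend abandoning the contradiction scheme and instead, as the paper does, expanding the minor $\omega'$ that realizes $M_{v-1}(A)$ and differentiating in the single well-chosen column direction.
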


\begin{proof}
Let $\omega' = (I',J')\in\Omega_{v - 1}$ be the value which maximizes the expression $|D_{\omega'}(A)|$. Choose any $i_0\in\{1,\ldots,N\}\setminus I'$, $j_0\in\{1,\ldots,N\}\setminus J'$, and let $\omega = (I,J) = (I'\cup\{i_0\},J'\cup\{j_0\})\in\Omega_v$. We will let $A'$ be such that $A'\e_i = \mathbf{0}$ for all $i$ except $i = i_0$, so that (\ref{derivativeofDIJ}) reduces to
\[
\nabla_{A'} D_{(I,J)}(A) = \sum_{\ell = 1}^v \pm \left[A'\e_{i_0}\cdot \Y_{j_\ell}\right]D_{(I\setminus\{i_0\},J\setminus\{j_\ell\})}(A).
\]
We may choose any value for $A'\e_{i_0}$ which lies in $\Reals^M$. In particular, we may let
\[
A'\e_{i_0} = \Y_{j_0} - \sum_{j = 1}^N\left[\Y_{j_0}\cdot \e_{M + i}\right]\B_i
\]
since computation verifies $A'\e_{i_0}\cdot \e_{M + i} = 0$ for all $i = 1,\ldots,N$. Now
\begin{align*}
\nabla_{A'} D_{(I,J)}(A)
&= \sum_{\ell = 1}^v \pm \left[\Y_{j_0}\cdot \Y_{j_\ell}\right]D_{(I\setminus\{i_0\},J\setminus\{j_\ell\})}(A)\\
&\hspace{.3 in}- \sum_{i = 1}^N\left[\Y_{j_0}\cdot \e_{M + i}\right]\sum_{\ell = 1}^v \pm \left[\B_i\cdot \Y_{j_\ell}\right]D_{(I\setminus\{i_0\},J\setminus\{j_\ell\})}(A)\\
&= \pm D_{(I\setminus\{i_0\},J\setminus\{j_0\})}(A)
- \sum_{i = 1}^N\left[\Y_{j_0}\cdot \e_{M + i}\right]D_{(I\cup\{i\}\setminus\{i_0\},J)}(A)
\end{align*}
and thus
\begin{align*}
|\nabla_{A'} D_\omega(A)| &\geq |D_{\omega'}(A)| - M_v(A)\sum_{i = 1}^N\left|\Y_{j_0}\cdot \e_{M + i}\right|\\
&\geq M_{v - 1}(A) - \sqrt{N}M_v(A).
\end{align*}
On the other hand
\[
\|A'\| \leq 1 + \sqrt{N}\sigma
\]
and the lemma follows.
\end{proof}

We now prove Lemma \ref{hard theorem} by induction on $v$. When $v = 0$ the lemma is trivial (By convention we say that $\max(\varnothing) = 0$). Suppose that the lemma has been proven for $v - 1$, and we want to prove it for $v$. Let $\nu_{v - 1} > 0$ be given by the induction hypothesis. Fix $0 < \mu_{v - 1} \leq \nu_{v - 1}$ and $\nu_v > 0$ to be determined. Suppose that we are given $0 < \mu_v \leq \nu_v$ and $\Y_1,...,\Y_N$ a sequence of orthonormal vectors, and let $B$ be the first ball played by Bob in the finite game. By the induction hypothesis, Alice can play in a way such that if $B_{v - 1}$ is the first ball chosen by Bob satisfying $\rho(B_{v - 1}) < \mu_{v - 1}\rho_B$, then for all $A\in B_{v - 1}$ we have
\begin{equation}
\label{DomegaprimeA}
M_{v - 1}(A) > \nu_{v - 1}\rho_B M_{v - 2}(B_{v - 1}).
\end{equation}
We must describe how Alice will continue her strategy so as to satisfy (\ref{inductionhypothesis}). We begin with the following observation:
\begin{claim}
\label{claimABv1}
For all $A\in B_{v - 1}$ we have
\[
M_{v - 1}(B_{v - 1}) \leq v M_{v - 1}(A).
\]
\end{claim}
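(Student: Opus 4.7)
The plan is to use a first-order Taylor estimate, bounding the deviation of each $D_\omega(A')$ from $D_\omega(A)$ on the convex ball $B_{v-1}$ via the gradient bound supplied by Lemma \ref{lemmaschmidt5}, and then showing that this deviation is small compared to $M_{v-1}(A)$ by means of the inductive hypothesis (\ref{DomegaprimeA}).

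Concretely, fix any $A' \in B_{v-1}$ and any $\omega \in \Omega_{v-1}$, and apply the mean value inequality along the straight segment from $A$ to $A'$, which lies entirely in $B_{v-1}$ by convexity. Lemma \ref{lemmaschmidt5} yields
\[
\|\nabla D_\omega(A'')\|_\op \leq (v-1)\, M_{v-2}(A'') \leq (v-1)\, M_{v-2}(B_{v-1})
\]
for every $A''$ on this segment. Combining with $\|A' - A\| \leq 2\rho(B_{v-1}) < 2\mu_{v-1}\rho_B$ gives
\[
|D_\omega(A') - D_\omega(A)| \leq 2(v-1)\, \mu_{v-1} \rho_B\, M_{v-2}(B_{v-1}).
\]
Now (\ref{DomegaprimeA}) can be rewritten as $\rho_B M_{v-2}(B_{v-1}) < M_{v-1}(A)/\nu_{v-1}$, and substituting this in gives
\[
|D_\omega(A') - D_\omega(A)| < \frac{2(v-1)\, \mu_{v-1}}{\nu_{v-1}}\, M_{v-1}(A).
\]

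At this point, provided $\mu_{v-1}$ was picked so that $\mu_{v-1} \leq \nu_{v-1}/2$ (which is consistent with the requirement $\mu_{v-1} \leq \nu_{v-1}$ at the start of the induction step, since we are free to shrink it further), the coefficient above is at most $v-1$. Combined with the trivial bound $|D_\omega(A)| \leq \|\vec{M}_{v-1}(A)\| = M_{v-1}(A)$, this yields $|D_\omega(A')| \leq v\, M_{v-1}(A)$, and taking the supremum over $A' \in B_{v-1}$ and $\omega \in \Omega_{v-1}$ delivers the claim. The only care needed is the bookkeeping: we must remember to impose the auxiliary constraint $\mu_{v-1} \leq \nu_{v-1}/2$ when invoking the induction hypothesis. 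No new idea beyond Lemma \ref{lemmaschmidt5} and the previous inductive step is required.
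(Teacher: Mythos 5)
Your proof is correct and follows the same approach as the paper's: apply the mean value inequality along the segment from $A$ to $A'$, bound the gradient using Lemma~\ref{lemmaschmidt5}, and absorb the resulting deviation into $M_{v-1}(A)$ via the inductive bound~(\ref{DomegaprimeA}). In fact you are slightly more careful than the paper: you correctly bound $\|A'-A\|\le 2\rho(B_{v-1})<2\mu_{v-1}\rho_B$ and compensate with the auxiliary constraint $\mu_{v-1}\le\nu_{v-1}/2$, whereas the paper writes $\|C\|\le\mu_{v-1}\rho_B$ (dropping the factor of $2$, which is valid only if $A$ is taken to be the center of $B_{v-1}$) and uses the weaker $\mu_{v-1}\le\nu_{v-1}$; this is a harmless oversight since the eventual choice of $\mu_{v-1}$ is far smaller than $\nu_{v-1}/2$ anyway, but your bookkeeping is the cleaner of the two.
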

\begin{proof}
Fix $A' = A + C \in B_{v - 1}$ and $\omega'\in\Omega_{v - 1}$. We use (\ref{nabla}) to bound the right hand side of the mean value inequality:
\begin{align*}
|D_{\omega'}(A') - D_{\omega'}(A)| &\leq \int_{t = 0}^1 |\nabla_C D_{\omega'}(A + tC)|\d t\\
&\leq \|C\|\int_{t = 0}^1 \|\nabla D_{\omega'}(A + tC)\|_\op\d t\\
&\leq \mu_{v - 1}\rho_B (v - 1)M_{v - 2}(B_{v - 1})\\
&\leq \frac{\mu_{v - 1} (v - 1)}{\nu_{v - 1}}M_{v - 1}(A)\\
&\leq (v - 1)M_{v - 1}(A).
\end{align*}
Thus $|D_{\omega'}(A')| \leq v M_{v - 1}(A)$. Taking the supremum over all $\omega'\in\Omega_{v - 1}$ and all $A'\in B_{v - 1}$ completes the proof.
\end{proof}
To complete the proof of Lemma \ref{hard theorem}, we divide into two cases:
\begin{itemize}
\item[Case 1:] $M_v(A) > \varepsilon_1 M_{v - 1}(A)$ for all $A\in B$. In this case, Alice will make dummy moves until $\rho(B_v) < \mu_v\rho_B$. By Claim \ref{claimABv1} we have $M_v(A) > (\varepsilon_1/v)M_{v - 1}(B_{v - 1}) \geq (\varepsilon_1/v)M_{v - 1}(B_v)$, so (\ref{inductionhypothesis}) holds as long as $\nu_v \leq \varepsilon_1/(v\sigma)$.
\item[Case 2:] $M_v(A) \leq \varepsilon_1 M_{v - 1}(A)$ for some $A\in B$. In this case, by Lemma \ref{lemmaschmidt6} we have
\begin{equation}
\label{nablalowerbound}
\|\nabla D_\omega(A)\|_\op > \varepsilon_2 M_{v - 1}(A)
\end{equation}
for some $\omega\in\Omega_v$. Let
\[
L(A') = D_\omega(A) + \nabla_{A' - A}D_\omega(A)
\]
be the linearization of $D_\omega$ at $A$, and let $\cl = L^{-1}(0)$ be its affine kernel. Alice's strategy will be to delete the neighborhood $\cl^{(\beta\rho(B_{v - 1}))}$ of $\cl$, and then make dummy moves until $\rho(B_v) < \mu_v\rho_B$. The gradient condition (\ref{nablalowerbound}) implies that 
\begin{align*}
|L(A')| &\geq \beta\rho(B_{v - 1})\varepsilon_2 M_{v - 1}(A)\\
&\geq \beta^2\mu_{v - 1}\rho_B \varepsilon_2 M_{v - 1}(A)
\end{align*}
for all $A'\in B_{v - 1}\setminus\cl^{(\beta\rho(B_{v - 1}))}$. On the other hand, letting $C = A' - A$, the standard error formula for linearization tells us that
\[ 
|D_\omega(A') - L(A')| \leq\int_{t = 0}^1 (1 - t)|\nabla_C\nabla_C D_\omega(A + tC)|\d t
\]
and combining with (\ref{nablanabla}) and (\ref{inductionhypothesis}) [with $v = v - 1$] gives
\begin{align*}
|D_\omega(A') - L(A')| &\leq \|C\|^2\int_{t = 0}^1 (1 - t)\|\nabla\nabla D_\omega(A + tC)\|_\op\d t\\
&\leq \|C\|^2\int_{t = 0}^1 (1 - t) v^2 M_{v - 2}(B_{v - 1})\d t\\
&\leq (\mu_{v - 1}\rho_B)^2 \frac{v^2}{2}M_{v - 2}(B_{v - 1})\\
&\leq \frac{(\mu_{v - 1}\rho_B)^2}{\nu_{v - 1}\rho_B} \frac{v^2}{2}M_{v - 1}(A)\\
&= \frac{\mu_{v - 1}^2}{\nu_{v - 1}}\rho_B\frac{v^2}{2}M_{v - 1}(A)
\end{align*}
and thus
\[
|D_\omega(A')| \geq \left(\beta^2\varepsilon_2 - \frac{v^2}{2}\frac{\mu_{v - 1}}{\nu_{v - 1}}\right)\mu_{v - 1}\rho_B M_{v - 1}(A).
\]
Letting
\[
\mu_{v - 1} = \frac{\beta^2\varepsilon_2\nu_{v - 1}}{v^2}
\]
we have
\begin{align*}
M_v(A') \geq |D_\omega(A')| &\geq \frac{1}{2}\beta^2\varepsilon_2\mu_{v - 1}\rho_B M_{v - 1}(A)\\
&\geq \frac{1}{2v}\beta^2\varepsilon_2\mu_{v - 1}\rho_B M_{v - 1}(B_v).
\end{align*}
Letting
\[
\nu_v = \frac{1}{2v}\beta^2\varepsilon_2\mu_{v - 1}
\]
we see that (\ref{inductionhypothesis}) is satisfied with $A = A'$. Now $A'$ was an arbitrary element of $B_{v - 1}\setminus\cl^{(\beta\rho(B_{v - 1}))}$. But because of the restriction on Bob's moves, we have $B_v\subseteq B_{v - 1}\setminus\cl^{(\beta\rho(B_{v - 1}))}$; thus (\ref{inductionhypothesis}) holds for every element of $B_v$.
\end{itemize}

\bibliographystyle{alpha}

\end{document}